\newtheorem{thm}{Theorem}[section]
\newtheorem{lem}[thm]{Lemma}
\theoremstyle{definition}
\newtheorem{defin}[thm]{Definition}
\newtheorem{rem}[thm]{Remark}
\newtheorem{exa}[thm]{Example}
\newtheorem{hremark}[thm]{Historical remark}
\numberwithin{equation}{section}
\newcommand{\CaC}{{\cal C}} 
\newcommand{\ZZ}{\mathbb{Z}} 
\newcommand{\RR}{\mathbb{R}}
\begin{document}

%%%%% To ease editing, for IMPAN journals add:

\baselineskip=17pt

%%%%%%%%%%%%%%%%

\title{Three Brouwer fixed point theorems for homeomorphisms of the plane}
\author{Lucien GUILLOU}

%\author{Anna Kowalska\\
%Institute of Mathematics, Polish Academy of Sciences\\
%0-956 Warszawa, Poland\\
%E-mail: kowalska@impan.pl}

\date{}

\maketitle

%% Classification and key words; note that the 2010 classification is used:

\renewcommand{\thefootnote}{}

\footnote{2010 \emph{Mathematics Subject Classification}: Primary 37E30; Secondary 55M25, 54H25.}

\footnote{\emph{Key words and phrases}: fixed point, homeomorphism of the plane, index along a curve, prime end.}

\renewcommand{\thefootnote}{\arabic{footnote}}
\setcounter{footnote}{0}

%opening

\begin{abstract}

We prove three theorems giving fixed points for orientation preserving homeomorphisms of the plane following forgotten results of Brouwer.

\end{abstract}

\section{Introduction}

In 1910, Brouwer \cite{Brouwer1910b, Brouwer1911} proved the following three fixed point theorems (the first one is well known as the
Cartwright-Littlewood theorem, see the
 historical remark below). In all three cases we consider an orientation preserving homeomorphism $h$ of ${\mathbf R}^2$. A continuum
 is a non empty connected compact set.

\begin{thm}
\label{A}
 Let $K$ be a non separating continuum in ${\mathbf R}^2$ such that $h(K)=K$. Then $h$ admits a fixed point in $K$.
\end{thm}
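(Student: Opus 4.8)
The plan is to argue by contradiction. Assume $h$ has no fixed point on $K$. We may assume $K$ is not a single point, for otherwise that point is fixed and there is nothing to prove; then $\mathrm{Fix}(h)$ is a closed set disjoint from the compact set $K$.

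First I would move to the sphere. A homeomorphism of $\mathbb{R}^{2}$ is proper, so $h$ extends to an orientation preserving homeomorphism $\hat h$ of $S^{2}=\mathbb{R}^{2}\cup\{\infty\}$ with $\hat h(\infty)=\infty$; then $\mathrm{Fix}(\hat h)=\mathrm{Fix}(h)\cup\{\infty\}$ is a compact subset of $S^{2}$ disjoint from $K$. Since $K$ is a non-separating continuum, $U:=S^{2}\setminus K$ is a simply connected domain with $\infty\in U$ and $\hat h(U)=U$, and as $K$ has more than one point $U$ is conformally a disc. By Carathéodory's prime end theory a Riemann map $\mathbb{D}\to U$ sending $0$ to $\infty$ extends to a homeomorphism $\psi$ of $\overline{\mathbb{D}}$ onto the prime end compactification of $U$; since $\hat h|_{U}$ carries crosscuts to crosscuts, it induces a homeomorphism of that compactification, and conjugating it by $\psi$ produces an orientation preserving homeomorphism $F$ of $\overline{\mathbb{D}}$ with $F(0)=0$ and $F(\partial\mathbb{D})=\partial\mathbb{D}$. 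Write $g=F|_{\partial\mathbb{D}}$ and let $\rho$ be its rotation number.

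The crux is to prove $\rho=0$, and I would do this by an index computation on two different surfaces — the index being, in each case, the winding number of (a model of) $h-\mathrm{id}$ along a curve enclosing the fixed point set. The conjugacy $\psi$ identifies the compact set $\mathrm{Fix}(\hat h)\subseteq U$ with the compact set $\mathrm{Fix}(F)\cap\mathbb{D}$ and preserves its fixed point index, so $\mathrm{ind}(F,\mathrm{Fix}(F)\cap\mathbb{D})=\mathrm{ind}(\hat h,\mathrm{Fix}(\hat h))$. On the sphere $\hat h$ is isotopic to the identity and its fixed point set is compact, so by the Lefschetz--Hopf theorem $\mathrm{ind}(\hat h,\mathrm{Fix}(\hat h))=\chi(S^{2})=2$. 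Suppose now $\rho\neq 0$. Then $g$ has no fixed point, hence $F$ has none on $\partial\mathbb{D}$, so $\mathrm{Fix}(F)=\mathrm{Fix}(F)\cap\mathbb{D}$ is compact and the Lefschetz--Hopf theorem on the disc gives $\mathrm{ind}(F,\mathrm{Fix}(F)\cap\mathbb{D})=\chi(\overline{\mathbb{D}})=1$. As $1\neq 2$ this is absurd; therefore $\rho=0$, so $g$ fixes some prime end $\wp$ of $U$, and its impression $Z:=\mathrm{Imp}(\wp)$ is a subcontinuum of $\partial U\subseteq K$ with $h(Z)=Z$. (Non-separation is used precisely here: if $K$ separated, $U$ would not be a disc and the two index values need not clash — matching the failure of the statement for, say, an invariant annulus.)

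What remains — and this is where I expect the real difficulty — is to extract an honest fixed point of $h$ lying in $Z\subseteq K$. If $Z$ is a point we are done, and likewise if $\wp$ has a single principal point, which is then forced to be fixed by $h$ since $h(\Pi(\wp))=\Pi(\wp)$. In general I would use that $\wp$ is fixed: pick a decreasing sequence of crosscut neighbourhoods $D_{n}\subseteq U$ of $\wp$ whose closures meet $K$ in sets shrinking onto $Z$; for $n$ large $h(D_{n})$ is again a crosscut neighbourhood of $\wp$, comparable with $D_{n}$, and from the crosscuts bounding $D_{n}$ and $h(D_{n})$, completed by short arcs running close to $\partial U$, one assembles a Jordan curve $\gamma_{n}$ confined to an arbitrarily small neighbourhood of $Z$. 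The way $h$ carries one of these crosscuts across the other near the fixed prime end should force the index $i(h,\gamma_{n})$ to be nonzero; yet if $h$ had no fixed point on $Z$ then, for $n$ large, $\gamma_{n}$ together with the disc it bounds would miss $\mathrm{Fix}(h)$, whence $i(h,\gamma_{n})=0$ — a contradiction. The delicate points are that $\partial U$ may be wild along $Z$, so the crosscuts, their $h$-images, and the closing arcs must be chosen carefully to obtain a genuine simple closed curve with the claimed winding number, and that one must control which disc $\gamma_{n}$ bounds (using that $Z$ does not enclose a complementary region of $K$, a case settled directly by Brouwer's fixed point theorem). A possible alternative for this step is to derive from the fixed prime end a strictly smaller invariant non-separating continuum and iterate.
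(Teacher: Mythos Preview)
Your route through prime ends is the Cartwright--Littlewood strategy rather than the elementary Brouwer one the paper follows, and your Lefschetz $1$-versus-$2$ argument producing a fixed prime end $\wp$ is correct and clean. The genuine gap is step~5. Having obtained an invariant impression $Z\subseteq K$, you are essentially facing the theorem again for $Z$ (or its filled-in hull), and neither of your proposed exits escapes this. The iteration idea can fail outright: for indecomposable continua --- exactly the examples the paper flags in its prime-end section --- \emph{every} prime end may have impression equal to the whole boundary, so your ``strictly smaller'' invariant non-separating continuum may simply be $K$ itself, and Zorn buys nothing. Your crosscut sketch is closer in spirit to what actually works, but the two load-bearing phrases are carrying all the weight: ``short arcs running close to $\partial U$'' must be produced so that together with the crosscuts they form a genuine simple closed curve (nontrivial when $\partial U$ is wild), and ``should force the index $i(h,\gamma_n)$ to be nonzero'' is precisely the computation that has to be done. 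You have relocated the difficulty, not removed it.

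The paper's proof is direct and bypasses prime ends entirely. A square-grid construction (Lemma~\ref{dlem}) encloses $K$ in a Jordan curve $\mathcal{C}$ arbitrarily close to it, with each point of $\mathcal{C}$ joined to $K$ by a short straight segment, these segments pairwise disjoint except possibly at their endpoints in $K$. If $h$ had no fixed point on $K$, choose $\mathcal{C}$ so close that $|h(x)-x|>3\epsilon$ along it while all the segments and the subarcs between consecutive base points have diameter $<\epsilon$; one then has points $p_0,\dots,p_n=p_0$ on $\mathcal{C}$ and arcs $\rho_i$ to $K$ such that each cut $\rho_i\,p_ip_{i+1}\,\rho_{i+1}$ is disjoint from its $h$-image. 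The key Lemma~\ref{Klem} compares the field $h(x)-x$ on each $p_ip_{i+1}$ with an auxiliary field pointing into $\mathcal{C}$, splitting into three cases according to whether the region $\Omega_i$ cut out is disjoint from, contains, or is contained in $h(\Omega_i)$; the upshot is $i(h,\mathcal{C})=1+k$ with $k\ge 0$, contradicting $i(h,\mathcal{C})=0$. This is exactly the index computation your crosscut sketch gestures toward, but carried out once, directly on a single curve around all of $K$, without the prime-end detour.
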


Let us recall that a set $X$ is compactly connected if given any two points in $X$, there exists a subcontinuum of $X$ which contains the two
given points.

\begin{thm}
\label{B}
 Let $F$ be a closed, compactly connected, non separating subset of ${\mathbf R}^2$ without interior such that 
   $h(F)=F$. Then $h$ admits a fixed point in $F$.
More precisely, we will prove that (if $F$ is non compact) if $h$ has no fixed point in $F$, given any neighborhood $V$ of $F$, there is a simple
closed curve with
$h$-index $+1$ inside $V$.

\end{thm}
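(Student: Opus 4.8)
My plan distinguishes whether $F$ is compact. If $F$ is compact, it is in fact a continuum: fixing $p\in F$, every $q\in F$ lies in a subcontinuum $C_q\subseteq F$ containing $p$, so $F=\bigcup_q C_q$ is connected, and being compact and nonempty it is a continuum; it is non-separating and satisfies $h(F)=F$, so Theorem~\ref{A} applies and gives a fixed point of $h$ in $F$. Hence all the work is in the non-compact case, and there I would prove the refined assertion, which implies the theorem: if $h$ had no fixed point in $F$ then $\mathrm{Fix}(h)$ and $F$ would be disjoint closed sets, so some open neighborhood $V$ of $F$ satisfies $\mathrm{Fix}(h)\cap V=\emptyset$; a simple closed curve in $V$ of $h$-index $+1$ then forces, by elementary degree theory (the map $x\mapsto x-h(x)$ is non-vanishing on the disk it bounds yet has winding number $+1$ along the boundary), a fixed point of $h$ in that disk, and since the construction keeps the disk inside $V$ this contradicts $\mathrm{Fix}(h)\cap V=\emptyset$.

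So fix a non-compact $F$ and an arbitrary neighborhood $V$ of $F$. First, $F$ is connected (same argument as above, using only the common point $p$), and, $F$ having empty interior, $\RR^2\setminus F$ is dense; being also non-separating it is connected. I would then pass to the sphere $S^2=\RR^2\cup\{\infty\}$: the set $\widehat F:=F\cup\{\infty\}$ is a continuum, it is non-separating because $S^2\setminus\widehat F=\RR^2\setminus F$ is connected, and it is invariant under the extension $\widehat h$ of $h$, an orientation-preserving homeomorphism of $S^2$ fixing $\infty$. Consequently $U:=S^2\setminus\widehat F$ is a simply connected, $\widehat h$-invariant domain. The goal is to produce, inside $V$, a simple closed curve of $h$-index $+1$. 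Purely local data cannot do this: since $h$ moves every point of $F$, each $x\in F$ has a Jordan neighborhood $W_x\subseteq V$ with $h(W_x)\cap W_x=\emptyset$, whence $i(h,\partial W_x)=0$; the index $+1$ must therefore come from the global structure of $\widehat F$ and of $\widehat h$ acting on it, and in particular from prime-end dynamics together with the constraint at the forced fixed point $\infty$.

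Concretely, I would study the Carath\'eodory (prime-end) compactification of $U$ and the induced orientation-preserving circle homeomorphism $g$ of the prime-end circle, with its rotation number $\rho$. If $\rho$ is rational there is a periodic prime end whose impression is a subcontinuum of $\widehat F$ invariant under an iterate of $\widehat h$; applying the prime-end reductions used in the known proofs of Theorem~\ref{A} (passing, if necessary, to a non-separating invariant subcontinuum lying in $F$) and then Theorem~\ref{A} would yield a fixed point of $h$ in $F$, which is impossible under the standing hypothesis --- so that branch must instead, tracked quantitatively, deliver the index-$+1$ curve inside $V$. If $\rho$ is irrational the prime-end homeomorphism has no periodic point; here I would use that any homeomorphism of the closed disk which preserves the boundary circle and is fixed-point-free on it has an interior fixed point (Lefschetz), transport this fixed point back through $U$, and combine it with the free-disk index computation near $\widehat F$ to obtain again a simple closed curve of $h$-index $+1$ confined to $V$.

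The main obstacle is precisely this last step: securing index exactly $+1$, rather than $0$, while keeping the curve (and the disk it bounds) inside the prescribed $V$. Since $\widehat h$ necessarily fixes $\infty\in\widehat F$, a crude application of Theorem~\ref{A} on $S^2$ returns only the useless fixed point $\infty$; the whole difficulty is to convert the global information --- non-separation of $\widehat F$, the prime-end rotation number, and the Lefschetz and index bookkeeping on $S^2$ (using that an isolated fixed point of a planar orientation-preserving homeomorphism has index at most $1$) --- into a genuinely local statement near $F$, namely a simple closed curve in an arbitrarily small neighborhood carrying $h$-index $+1$. Carrying out the prime-end reduction so that the continuum one lands on avoids $\infty$ (or collapses to a non-separating subcontinuum of $F$), and handling the irrational rotation number, is the delicate heart of the proof, and is exactly where Brouwer's "index along a curve" technique is indispensable.
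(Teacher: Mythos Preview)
Your plan has a genuine gap at the heart of the argument, and one of its two branches is both vacuous and, separately, mistaken.

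First, the rotation-number dichotomy is illusory. Because $F$ is closed, non-compact, non-separating, compactly connected and has empty interior, one shows (this is Lemma~\ref{boundary lem} in the paper) that $\RR^2\setminus F\cong\RR^2$, hence $\infty$ is an \emph{accessible} boundary point of $\widehat F$ and $[\gamma_\infty]$ is a fixed prime end of $g$. Thus $\rho(g)=0$ always; the ``irrational'' branch never occurs. Moreover, your Lefschetz argument in that branch points the wrong way: an interior fixed point of the disk model corresponds to a fixed point of $h$ in $U=\RR^2\setminus F$, not in $F$, so it neither contradicts the hypothesis nor helps produce a curve of index $+1$ near $F$.

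Second, once one removes $[\gamma_\infty]$ the remaining prime ends form a \emph{line}, and the useful fact (Lemma~\ref{orderlem}) is that $h$ has no fixed point on this line; this is where Theorem~\ref{A} enters, applied to the impression of a hypothetical fixed prime end, which is a subcontinuum of $F$ (not containing $\infty$) precisely because $F$ is compactly connected. Your sketch does not isolate this step, and without it you cannot conclude that the induced map on the line is a ``translation''.

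Third --- and this is the main gap --- you explicitly defer the construction of the index-$+1$ curve, but that construction \emph{is} the proof. The paper builds it as follows: an access arc $\gamma$ cuts a thin collar $R$ of $F$ (bounded by a proper line $L\subset V$) into two unbounded regions $A,B$; a key topological lemma (Lemma~\ref{compactlem}, using $\mathrm{int}\,F=\emptyset$) shows that $\mathrm{Fr}A\cap\mathrm{Fr}B\cap F$ is unbounded, so one can pick a point $x$ there far from a compact piece of $F$ containing $p_0,h(p_0),h^{-1}(p_0)$, access $x$ from both $A$ and $B$, and splice these access arcs, their $h$-images, a short segment near $x$, and arcs of $\mathrm{Fr}B_n$ (from Lemma~\ref{dbislem}) into a simple closed curve $\mathcal C\subset V$. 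The index of $h$ along $\mathcal C$ is then computed to be $+1$ via two ad hoc lemmas (Lemma~\ref{Tlem} and the variant Lemma~\ref{Kbislem} of the key Lemma~\ref{Klem}), using crucially that the line-of-prime-ends map is fixed-point-free so that the small ``special cuts'' are all pushed one way. None of these ingredients --- the double-frontier lemma, the splicing construction, or the index lemmas --- appears in your plan, and they do not follow from rotation-number or Lefschetz considerations.
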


We now consider again a non degenerated and non separating continuum $K$ in ${\mathbf R}^2$ such that $h(K)=K$. We further suppose that the circle
of 
prime ends of ${\mathbf R}^2 \setminus K$ splits into two non degenerated arcs $a_1$ and $a_2$ with the same endpoints such that $\cup_{p\in a_i}
\mbox{I}(p)=K, i=1, 2$, where $\mbox{I}(p)$ is the impression of the prime end $p$ (and therefore $\mbox{int} K=\emptyset$). Equivalently, the
end points of all accessible arcs in $a_i$ are dense in $K$, $i=1, 2$ (see section 4).

\begin{thm}
\label{C}
Suppose that the orientation preserving homeomorphism $\hat{h}$ of the circle of prime ends naturally induced by $h$ preserves $a_1$ and $a_2$ (that is
fixes the
common
end points of $a_1$ and $a_2$).
Then $h$ admits two fixed points in $K$.
\end{thm}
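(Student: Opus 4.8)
The plan is to transfer the problem to the prime end compactification of the complementary domain of $K$, to turn fixed prime ends into fixed points of $h$ in $K$ by means of Theorem~\ref{A}, and then --- this being the delicate point --- to use the two covering conditions $\bigcup_{p\in a_i}\mbox{I}(p)=K$ together with the index--along--a--curve technique underlying Theorem~\ref{B} in order to guarantee that the two fixed points so obtained are distinct. Concretely, I would first extend $h$ to an orientation preserving homeomorphism of the sphere $S^2={\mathbf R}^2\cup\{\infty\}$ fixing $\infty$, and set $U:=S^2\setminus K$. Since $K$ is a non degenerated, non separating continuum with empty interior, $U$ is an open, connected, simply connected, proper subset of $S^2$, invariant under $h$, with $\partial U=K$; note in passing that $\infty\in U$ is a fixed point of $h$ interior to $U$. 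The prime end compactification $\widehat U$ is homeomorphic to the closed disc, $h|_U$ extends to a homeomorphism $\widehat h$ of $\widehat U$, and $\widehat h$ restricts to an orientation preserving homeomorphism of the circle of prime ends, which is $a_1\cup a_2$. The hypothesis of the theorem says exactly that $\widehat h$ fixes the two common endpoints $p_0,q_0$ of $a_1$ and $a_2$ and maps each of $a_1$, $a_2$ onto itself.

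The key elementary observation is that a fixed prime end produces a fixed point in $K$. Indeed, if $\widehat h(p)=p$, the impression $\mbox{I}(p)$ is a continuum contained in $\partial U=K$; since $K$ has empty interior, $S^2\setminus\mbox{I}(p)$ has no component other than the one containing $U$ --- any other component would be a non empty open subset of $K$ --- so $\mbox{I}(p)$ is non separating, and $h(\mbox{I}(p))=\mbox{I}(\widehat h(p))=\mbox{I}(p)$. By Theorem~\ref{A}, $h$ has a fixed point in $\mbox{I}(p)$. Applying this to $p_0$ and to $q_0$ produces fixed points $z_1\in\mbox{I}(p_0)$ and $z_2\in\mbox{I}(q_0)$ of $h$ in $K$. (One fixed point is in any case free, since $K$ itself satisfies the hypotheses of Theorem~\ref{B}.)

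If $z_1\neq z_2$ we are done, so suppose instead that $h$ has a single fixed point $z$ in $K$; then $z\in\mbox{I}(p)$ for every $\widehat h$-fixed prime end $p$. It is here that the covering conditions must be exploited. Since $\widehat h$ preserves $a_1$ and fixes its endpoints, its boundary dynamics along $a_1$ is an orientation preserving homeomorphism of an arc fixing both endpoints; approximating the prime end boundary from inside $U$, one obtains, inside any prescribed neighbourhood $V$ of $K$, a chain of cross-cuts along $a_1$ adapted to this dynamics and hence, by the index argument behind Theorem~\ref{B}, a simple closed curve $\gamma_1\subset V$ with $h$-index $+1$ whose side $D_1$ not containing $\infty$ meets $K$ only in a small neighbourhood of the impressions along $a_1$ near $p_0$ and $q_0$; running the same construction along $a_2$ gives a second such curve $\gamma_2$ with disc $D_2$. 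The conditions $\bigcup_{p\in a_i}\mbox{I}(p)=K$ are precisely what make it possible to choose $\gamma_1,\gamma_2$ so that $D_1$ and $D_2$ describe the ``two sides'' of $K$ and, after excising a small disc around $z$, carry their index $+1$ through fixed points of $h$ lying in $K$ and distinct from one another. Since a simple closed curve with $h$-index $+1$ encloses a fixed point, this would yield two distinct fixed points of $h$ in $K$, contradicting the assumed uniqueness.

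I expect the real obstacle to be exactly this last step: one fixed point in $K$ is cheap (Theorem~\ref{A} or~\ref{B}), and the whole content of Theorem~\ref{C} is the \emph{second}, distinct one. The difficulty is that $\mbox{I}(p_0)$ and $\mbox{I}(q_0)$ may overlap heavily --- even both equal $K$ --- so ``one fixed point in each impression'' is not enough, and the two covering hypotheses have to be brought in to break this symmetry. Concretely, the main work will be (i) making the index computation along each of $a_1$, $a_2$ rigorous while controlling the fixed points of $h$ lying in $U$ close to $K$, which may accumulate on $z$, and (ii) ensuring that the two index-$+1$ curves, hence the two fixed points, can genuinely be kept apart.
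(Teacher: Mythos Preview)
Your setup is fine and you correctly isolate the difficulty: one fixed point comes for free from Theorem~\ref{A}, and the whole content of Theorem~\ref{C} is producing a \emph{second} one. But the mechanism you propose for that second fixed point does not work as written, and the paper's argument is structurally different.

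The gap is in your ``two curves along the two sides'' step. A simple closed curve $\gamma_1\subset V\setminus K$ either encloses all of $K$ or none of it (since $K$ is connected); in the first case $D_1\cap K=K$, in the second $D_1\cap K=\emptyset$, so your description ``$D_1$ meets $K$ only near $\mbox{I}(p_0)\cup\mbox{I}(q_0)$'' forces $\gamma_1$ to cross $K$. That in itself is allowed, but then the index-$+1$ conclusion only gives a fixed point in $D_1$, which will either be the unique $z\in K$ or a fixed point of $h$ in ${\mathbf R}^2\setminus K$; neither produces a second fixed point in $K$. Excising a small disc around $z$ does not help: fixed points of $h$ in $U$ may accumulate on $z$ (you note this yourself), so the local index of $z$ is not defined and you cannot subtract it. More fundamentally, you never explain how to build $\gamma_1$ so that it crosses $K$ in a controlled way and so that the interior lies in a region where $h$ is fixed-point free.

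What the paper actually does is build a \emph{single} curve $\mathcal C$ that separates the assumed unique fixed point $p_0$ from a large piece $L\subset K$, and then shows $i(h,\mathcal C)\geq 1$ although $h$ is fixed-point free on $\overline{\mbox{int}\,\mathcal C}$. The crossing of $K$ is the heart of the construction and uses the covering hypothesis directly: since endpoints of access arcs from $a_1$ and from $a_2$ are both dense in $K$, one can pick accessible points $p$ (from $a_1$) and $q$ (from $a_2$) very close together, away from $p_0$, and join them by a short arc $\alpha$ with $h(\alpha)\cap\alpha=\emptyset=h^2(\alpha)\cap\alpha$. The curve $\mathcal C$ is then assembled from $h(\alpha)$, short pieces of $h(\gamma)$ and $h(\delta)$ (the access arcs at $p$ and $q$), and an arc $\eta$ running close to $K$ obtained from the disc-approximation Lemma~\ref{dbislem}. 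By construction $p_0\notin\mbox{int}\,\mathcal C$, and $\mathcal C$ lies in a neighbourhood of $L\cup\alpha\cup h(\alpha)\cup h^2(\alpha)$ on which $\mbox{dist}(h(x),x)$ is bounded below. The index computation then splits into four cases according to the relative order of $[\gamma],h([\gamma])$ on $a_1$ and of $[\delta],h([\delta])$ on $a_2$; each case is handled by the variant Lemma~\ref{Kbislem} of the key index lemma together with Lemma~\ref{ilem5}, and yields index $\geq 1$, the desired contradiction. The ingredient you are missing is precisely this transversal arc $\alpha$: it is what lets one cut $K$ and confine the supposed unique fixed point to one side.
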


\begin{hremark}
 In their Annals paper of 1951, Cartwright and Littlewood \cite{Cartwright-Littlewood} proved Theorem~\ref{A} using the theory of prime ends.
This was probably the
first application of that theory to dynamical systems (though as we will see with Theorems~\ref{B} and \ref{C}, Brouwer can be considered as a
precursor on that matter too) and the result as well as the ideas of its proof have been of great importance and have
generated a large body of literature.
Nevertheless, four years later, in the same Annals, Reifenberg \cite{Reifenberg} explained a short elementary proof due to Brouwer of the same
result. Strangely enough, that
paper of Reifenberg went unnoticed and several papers have been written giving various proofs of Theorem~\ref{A} \cite{Hamilton, Brown,
Medvedev, Barge-Gillette, Matsumoto-Nakayama, Blokh-...} but without recovering the original ideas.

\end{hremark}

What we offer here is another presentation of Brouwer clever but elementary ideas with two other fixed point results he obtained in that vein and 
which are seemingly new knowledge even today. I wish to thank Alexis Marin for his comments on this paper.

\section{Index along a curve}
Given an continuous path $\alpha : [a, b] \rightarrow {\bf R}^2$ and a continuous map $f : \mbox{\rm Im}\alpha \rightarrow {\bf R}^2$ without
fixed point, we
can define the index of $f$ along $\alpha$, denoted $i(f, \alpha)$, as follows. Write $f(\alpha (t))-\alpha(t) = \vert f(\alpha (t))-\alpha(t) \vert
e^{2i\pi \theta
(t)}$ for some continuous map $\theta : [a, b] \rightarrow {\bf R}$.

\begin{defin} Set $i(f, \alpha)= \theta (b)-\theta (a)$. The following properties follow immediately from those of the covering map $t \mapsto
e^{2i\pi t}$ from $\RR$ to ${\bf S}^1$.
\begin{enumerate}
\item $i(f, \alpha)$ does not depend on the choice of the lift $\theta$ of the map $\dfrac{f \circ \alpha - \alpha}{\vert f \circ \alpha - \alpha \vert}$
from $[a, b]$ to ${\bf S}^1$.

\item $\alpha$ and $\alpha \circ \phi$, where $\phi$ is a map from $[a, b]$ to itself fixing $a$ and $b$ give rise to the same index. More
generally, if $\alpha , \beta : [a, b] \rightarrow \RR^2$ are two paths homotopic rel $\{ a, b\}$ such that $f$ has no fixed point on the image
 of the homotopy, then $i(f, \alpha)=i(f, \beta)$.

\item If $\alpha$ is a
closed curve (i.e. $\alpha(a)=\alpha(b)$), then $i(f, \alpha)$ is an integer.

\item If $\alpha^{-1}$ is defined by $\alpha^{-1} (t) = \alpha (b-t+a), a \leq t \leq b$, then $i(f, \alpha^{-1} )=-i(f, \alpha )$

\item If $g$ is an orientation preserving homeomorphism of ${\RR}^2$ and $\alpha$ a closed curve, then $i(gfg^{-1}, g(\alpha ))= i(f, \alpha )$ (for a proof of this fact,
one can use that $g$ is isotopic to $id$).

(Notice that the index along a non closed curve is not invariant by conjugation.)

\end{enumerate}

One often thinks equivalently of the vector field without zero $\xi$ on $\mbox{\rm Im}\alpha$ defined by $\xi(x) = f(\alpha (t))-\alpha(t)$ if
$x=\alpha (t)$
and one defines $i(\xi, \alpha)$ as $i(f, \alpha)$.
\end{defin}

\begin{lem}
\label{ilem1}
 $i(f, \alpha) =0$ if $\alpha$ is a simple closed curve and $f$ extend without fixed point to $\mbox{\rm int}\alpha$.
 
\end{lem}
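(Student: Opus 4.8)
The plan is to contract the simple closed curve $\alpha$ to a point \emph{inside} the closed region where $f$ is fixed point free, and then to quote the homotopy invariance of the index (property 2 of the Definition above). Write $D := \mbox{\rm Im}\,\alpha \cup \mbox{\rm int}\,\alpha$ for the closed Jordan domain bounded by $\alpha$. By hypothesis $f$ has a continuous fixed point free extension to $\mbox{\rm int}\,\alpha$, and it has no fixed point on $\mbox{\rm Im}\,\alpha$ either (otherwise $i(f,\alpha)$ would not be defined); so, keeping the name $f$, we get a continuous map $f\colon D\to\RR^2$ with $f(x)\neq x$ for all $x\in D$.

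The first step is to invoke the (planar) Schoenflies theorem to choose a homeomorphism $\Phi$ from the closed unit disk of $\RR^2$ onto $D$ carrying the unit circle ${\bf S}^1$ onto $\mbox{\rm Im}\,\alpha$. Setting $q:=\Phi^{-1}\circ\alpha\colon[a,b]\to{\bf S}^1$, which is continuous with $q(a)=q(b)$, I would then consider the map
\[
 H\colon [0,1]\times[a,b]\longrightarrow D,\qquad H(s,t)=\Phi\bigl((1-s)\,q(t)+s\,q(a)\bigr),
\]
which is well defined since $(1-s)q(t)+s\,q(a)$ is a convex combination of two points of the closed unit disk. A direct check gives $H(0,\cdot)=\alpha$, $H(1,\cdot)\equiv p:=\alpha(a)=\alpha(b)$, and $H(s,a)=H(s,b)=p$ for every $s$ (using $q(b)=q(a)$); hence $H$ is a homotopy rel $\{a,b\}$ from $\alpha$ to the constant path at $p$, with image inside $D$.

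The second step is immediate: $f$ has no fixed point on $D$, hence none on the image of $H$, so property 2 of the index yields $i(f,\alpha)=i(f,\gamma)$, where $\gamma$ is the constant path at $p$. Finally $i(f,\gamma)=0$: the vector $f(\gamma(t))-\gamma(t)=f(p)-p$ is a fixed nonzero vector, so it admits a constant argument $\theta$ and $i(f,\gamma)=\theta(b)-\theta(a)=0$. Therefore $i(f,\alpha)=0$.

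I do not expect a genuine obstacle here. The only input beyond the formal properties of the index recorded above is the planar Schoenflies theorem, used merely to know that $D$ is a topological closed disk and thereby to exhibit an explicit contraction of $\alpha$ inside $D$; the rest is bookkeeping. The single point requiring a little care is that the contracting homotopy must fix the endpoints of the parametrization, so that property 2 applies literally — this is why $H$ collapses everything toward the single point $q(a)$ rather than, say, toward $\Phi$ of the origin. (Alternatively, one can avoid naming a parametrization and argue by winding numbers: $f-\mathrm{id}$ maps the simply connected set $D$ into $\RR^2\setminus\{0\}$, so $(f-\mathrm{id})\circ\alpha$ is null homotopic there and its winding number, which is $i(f,\alpha)$, must vanish; but this rests on the same topological fact that $D$ is simply connected.)
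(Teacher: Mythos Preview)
Your proof is correct and follows essentially the same route as the paper's: apply Schoenflies to identify the closed Jordan domain with a disk, then contract $\alpha$ through the fixed-point-free region to a constant loop and read off index zero. The only cosmetic difference is that the paper contracts to the image of the center of the disk and argues directly with lifts (so the basepoint moves along the homotopy of closed loops), whereas you contract to the basepoint $\alpha(a)$ in order to invoke property~2 literally as a homotopy rel $\{a,b\}$; both executions are fine.
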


As usual, we will denote by int$\alpha$ the bounded component of $\RR^2 \setminus \mbox{\rm Im}\alpha$ when $\alpha$ is a simple closed curve, 
but if $\alpha$ is an arc (i.e. an injective path) from $[a, b]$ to $\RR^2$, int$\alpha$ will denote $\alpha(]a, b[)$. This should cause no confusion.

\begin{proof} One can suppose that $\alpha$ is given as a map from the unit circle ${\bf S}^1 = \{e^{2i\pi t} \vert 0 \leq t \leq 1 \}$  and,
using
Schoenflies Theorem, that it extends to a map
$\phi$ from the open unit disc ${\bf D}^2$ to $\mbox{Int}\alpha$. Then the map $F : (t,u) \rightarrow \dfrac{f(\phi (ue^{2i\pi t}))-\phi (ue^{2i\pi
t})}{\vert f(\phi
(ue^{2i\pi t}))-\phi (ue^{2i\pi t})\vert }$,  $0 \leq u \leq 1$, is a well
defined homotopy which lifts to a  homotopy $\theta _u$ with $\theta _0$ a constant map; now $\theta _u(1) -\theta _u (0)$ is an integer (since
$F(0, u) = F(1, u)$) depending continuously on $u$  which is $0$ if $u=0$ and so it is $0$ also when $u=1$.
\end{proof}

\begin{lem}
\label{ilem2}
 Suppose that $\alpha$ is an arc and that we are given two maps $f$ and $g$ without fixed point on
$\mbox{\rm Im}\alpha$ such that $f(\alpha (a))=g(\alpha (a))$, $f(\alpha (b))=g(\alpha (b))$. Then
\begin{enumerate}
\item $i(f, \alpha)- i(g, \alpha)=0 $ if the images of $f$ and $g$ lie inside $\RR^2 \setminus (L \bigcup \mbox{\rm Im}\alpha)$ where $L$ is a proper 
half-line from one endpoint of $\alpha$ towards infinity such that $L \bigcap \mbox{\rm Im}\alpha$ is reduced to that endpoint.
\item $i(f, \alpha)- i(g, \alpha)=1$ if the images of
$f$ and $g$ make up a Jordan curve $\CaC$, $\mbox{\rm Im}\alpha \subset \mbox{\rm int}{\CaC}$ and the orientation of $\cal C$ induced by that
of $f$ (from $f(\alpha (a))$ to $f(\alpha (b))$) is positive.
\end{enumerate}
\end{lem}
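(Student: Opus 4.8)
The plan is to reduce both statements to Lemma~\ref{ilem1} by gluing the two paths $f\circ\alpha$ and $g\circ\alpha$ along their common endpoints to form a closed curve and computing the index of a single map along that closed curve. Concretely, since $f$ and $g$ agree at $\alpha(a)$ and at $\alpha(b)$, the concatenation of the path $t\mapsto f(\alpha(t))$ with the reversed path $t\mapsto g(\alpha(b-t+a))$ is a closed curve $\gamma$ in $\RR^2$, and I want to realize $i(f,\alpha)-i(g,\alpha)$ as the index $i(\mathrm{id}\!-\!c,\gamma)$ of a suitable constant-target vector field, or more precisely to set up an auxiliary homeomorphism or an auxiliary path along which the relevant angular variation equals $i(f,\alpha)-i(g,\alpha)$. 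The cleanest device: consider the path $\beta$ in $\RR^2$ obtained by first traversing $f\circ\alpha$ from $f(\alpha(a))$ to $f(\alpha(b))$ and then traversing $g\circ\alpha$ backwards from $g(\alpha(b))=f(\alpha(b))$ back to $g(\alpha(a))=f(\alpha(a))$; this is a closed curve, and one checks directly from the definition (the angle function is additive under concatenation, and property (4) gives the sign change on the reversed piece) that the total winding of $\beta$ around the point $\alpha(a)=\alpha(b)$ — wait, $\alpha$ need not be closed, so one must be more careful and instead compare the angle functions $\theta_f$ and $\theta_g$ of $(f\circ\alpha-\alpha)/|f\circ\alpha-\alpha|$ and $(g\circ\alpha-\alpha)/|g\circ\alpha-\alpha|$ directly. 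The key point is that $i(f,\alpha)-i(g,\alpha)=(\theta_f(b)-\theta_g(b))-(\theta_f(a)-\theta_g(a))$, and since $f(\alpha(a))=g(\alpha(a))$ and $f(\alpha(b))=g(\alpha(b))$, the differences $\theta_f(a)-\theta_g(a)$ and $\theta_f(b)-\theta_g(b)$ are each integer multiples of $1$; so the quantity $i(f,\alpha)-i(g,\alpha)$ is itself an integer, equal to the ``winding discrepancy'' between the two paths.

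For part (1): here the images of $f$ and $g$ both avoid the half-line $L$ together with $\mathrm{Im}\,\alpha$, and $L$ meets $\mathrm{Im}\,\alpha$ only at an endpoint of $\alpha$, say $\alpha(a)$. Deleting $L$ from $\RR^2$ leaves a simply connected (indeed homeomorphic to a half-plane) open set $U$ that still contains $\mathrm{Im}\,\alpha\setminus\{\alpha(a)\}$ as well as the images of $f$ and $g$; I would use that in $U$ one can choose a continuous branch of the argument of $x\mapsto (y-x)$ for $x\in\mathrm{Im}\,\alpha$, $y$ in the images. More cleanly: build an explicit homotopy rel $\{a,b\}$ from $f\circ\alpha$ to $g\circ\alpha$ inside $U$ — since $U$ is simply connected and the two paths share endpoints, such a homotopy exists, and along the homotopy the map ``subtract $\alpha(t)$'' never vanishes because $\mathrm{Im}\,\alpha$ misses the homotopy image (both lie in $U$ but the homotopy can be taken within the image set, which is disjoint from $\mathrm{Im}\,\alpha$ except possibly at the shared endpoint — and there one uses $L$ to separate). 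Then property (2) of the index (homotopy invariance rel endpoints, with no fixed point on the homotopy image) gives $i(f,\alpha)=i(g,\alpha)$. The delicate bookkeeping is exactly the behavior near the common endpoint $\alpha(a)$, where $f(\alpha(a))=g(\alpha(a))$ may lie on the closure issues — this is precisely what the half-line $L$ is designed to handle, by providing a ``slit'' that kills the winding.

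For part (2): now the images of $f$ and $g$ together form a Jordan curve $\CaC$, with $\mathrm{Im}\,\alpha\subset\mathrm{int}\,\CaC$, and $f$ traces one arc of $\CaC$ positively. The strategy is to apply Lemma~\ref{ilem1} to the simple closed curve $\CaC$: since $\mathrm{Im}\,\alpha$ lies inside $\CaC$, one can think of the vector field $x\mapsto z-x$ for $z$ running over $\CaC$ and $x$ a fixed interior point, but more to the point I want the identity $i(f,\alpha)-i(g,\alpha)=\pm 1$ to come from the fact that $\CaC$ winds once around each interior point. Precisely: fix a point $p\in\mathrm{int}\,\CaC$ near (or equal to an average of endpoints of) $\alpha$, and compare — actually, the right move is to homotope $\alpha$ rel $\{a,b\}$ to a point, or to a short arc, within $\mathrm{int}\,\CaC$; under such a homotopy neither $f$ nor $g$ has a fixed point provided we keep $\mathrm{Im}\,\alpha$ off $\CaC$ (possible since $\CaC$ is closed and the homotopy can shrink $\alpha$ toward an interior point). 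After shrinking $\alpha$ to the constant path at an interior point $p$, $i(f,\alpha)$ becomes the variation of the argument of $f(\alpha(t))-p$ as $f\circ\alpha$ traverses its arc of $\CaC$, and similarly for $g$; their difference is the total variation of the argument of $(\text{point of }\CaC)-p$ around the whole of $\CaC$, traversed once with the positive orientation (this is where the positivity hypothesis on the orientation induced by $f$ is used to fix the sign), which equals $+1$ by the degree/winding property of a Jordan curve about an interior point — equivalently a direct application of Lemma~\ref{ilem1} to $\CaC$ with the constant map sending everything to $p$... one must phrase this so that the ``$+1$'' rather than ``$0$'' appears, the discrepancy being that in Lemma~\ref{ilem1} the curve bounds a disc on which $f$ extends, whereas here the relevant count is the winding of $\CaC$ itself about $p$.

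I expect the main obstacle to be the careful handling of the common endpoints $\alpha(a),\alpha(b)$ and keeping track of the sign/orientation conventions: in (1) ensuring the homotopy between $f\circ\alpha$ and $g\circ\alpha$ can be taken avoiding $\mathrm{Im}\,\alpha$ (the role of $L$), and in (2) verifying that the orientation of $\CaC$ induced by ``$f$ then $g^{-1}$'' being positive translates into $+1$ rather than $-1$, together with checking that shrinking $\alpha$ to an interior point keeps everything fixed-point-free. The rest is a routine application of the additivity of the angle function under concatenation and properties (2) and (4) of the index.
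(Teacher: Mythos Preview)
Your approach matches the paper's: for (1), homotope $f\circ\alpha$ to $g\circ\alpha$ rel endpoints inside the simply connected region $\RR^2\setminus(L\cup\mathrm{Im}\,\alpha)$ and invoke homotopy invariance of the index; for (2), shrink the base points to a single interior point of $\CaC$ so that $i(f,\alpha)-i(g,\alpha)$ becomes the winding number of $\CaC$ about that point --- the paper does this by applying Schoenflies to send $\CaC$ to $S^1$ and using the linear homotopy $\tilde f(\alpha(t))-u\alpha(t)$, where $\tilde f$ is the concatenation $f\ast g^{-1}$.

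One slip worth fixing: you cannot ``homotope $\alpha$ rel $\{a,b\}$ to a point'' when $\alpha(a)\neq\alpha(b)$. What actually happens (and what the paper does) is that the \emph{target} paths $f\circ\alpha$, $g\circ\alpha$ stay fixed on $\CaC$ while only the subtracted term $\alpha(t)$ in the vector field is deformed to a constant interior point; this is a homotopy of vector fields, not of $\alpha$ as a path with fixed endpoints. With that correction your argument goes through.
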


\begin{proof}

1) Since $\RR^2 \setminus (L \bigcup \mbox{\rm Im}\alpha)$ is simply connected, there exists a homotopy $F$ between $f$ and $g$ relative to the endpoints  
inside $\RR^2 \setminus (L \bigcup \mbox{\rm Im}\alpha)$. The homotopy $(t, u) \mapsto \dfrac{F(t, u)-\alpha(t)}{\vert F(t, u)-\alpha(t) \vert}$
 lifts to a homotopy $\theta _u$ which gives the result since $\theta _u (b)$ and $\theta _u (a)$ lift constantly
 $\dfrac{f(\alpha (b)) - \alpha(b)}{\vert f(\alpha (b)) - \alpha(b)\vert }$ and $\dfrac{f(\alpha (a)) - \alpha(a)}{\vert f(\alpha (a)) -
\alpha(a)\vert }$ as $u$ varies.

2) Let $\tilde f$ denote the parametrization of $\CaC$ given by the path composition of $f$ and $g^{-1}$ (where $g^{-1}(\alpha (t)) = g(\alpha
(b-t+a))$).
Applying Schoenflies theorem, we can suppose that $\CaC$ is the circle ${\bf S}^1$. The homotopy 
$F(u, t) = \tilde f(\alpha (t))-u\alpha(t)$, for $0 \leq u \leq 1$ gives the conclusion given our orientation hypothesis.

\end{proof}

\begin{lem}
\label{ilem3}
 If ${\cal C}$ is a simple closed curve and $f$ a map of ${\cal C}$ into ${\RR}^2$ without fixed point such that $f({\cal C}) \subset
\mbox{\rm int}{\cal C} \bigcup {\cal C}$, then $i(f, {\cal C}) = 1$.
\end{lem}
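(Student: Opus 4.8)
The plan is to straighten $\CaC$ to the unit circle by Schoenflies' theorem and then deform $f$ linearly to a constant map. Concretely, I would first pick, by Schoenflies, a homeomorphism $g$ of $\RR^2$ with $g(\mathbf{S}^1)=\CaC$ and $g(\mathbf{D}^2)=\mbox{\rm int}\,\CaC$ (here $\mathbf{D}^2$ is the open unit disc); composing $g$ with the reflection $z\mapsto\bar z$ if necessary — this reflection preserves $\mathbf{S}^1$ and $\mathbf{D}^2$ — I may assume $g$ orientation preserving. Set $\tilde f=g^{-1}\circ f\circ g$. This is a map of $\mathbf{S}^1$ into $g^{-1}(\mbox{\rm int}\,\CaC\cup\CaC)=\mathbf{D}^2\cup\mathbf{S}^1=\overline{\mathbf{D}^2}$, and it has no fixed point on $\mathbf{S}^1$ (a fixed point of $\tilde f$ at $x\in\mathbf{S}^1$ would give a fixed point of $f$ at $g(x)\in\CaC$). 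By the conjugation invariance of the index under orientation-preserving homeomorphisms (item 5 in the Definition above), $i(f,\CaC)=i(g\tilde fg^{-1},g(\mathbf{S}^1))=i(\tilde f,\mathbf{S}^1)$, so it suffices to prove $i(\tilde f,\mathbf{S}^1)=1$ for $\tilde f:\mathbf{S}^1\to\overline{\mathbf{D}^2}$ without fixed point on $\mathbf{S}^1$.

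Next, parametrizing $\mathbf{S}^1$ by $\alpha(t)=e^{2i\pi t}$, $0\le t\le 1$, I would consider the homotopy $f_u(x)=(1-u)\tilde f(x)$, $0\le u\le 1$, from $f_0=\tilde f$ to the constant map $f_1\equiv 0$. The key point is that $f_u(\alpha(t))-\alpha(t)$ never vanishes: if $(1-u)\tilde f(x)=x$ with $|x|=1$, then $(1-u)|\tilde f(x)|=1$, and since $|\tilde f(x)|\le 1$ this forces $u=0$ and $\tilde f(x)=x$, which is excluded. Hence $(t,u)\mapsto \bigl(f_u(\alpha(t))-\alpha(t)\bigr)/\bigl|f_u(\alpha(t))-\alpha(t)\bigr|$ is a continuous map of $[0,1]^2$ to $\mathbf{S}^1$ which agrees at $t=0$ and $t=1$, so by the covering-map properties underlying the definition of the index (the same ones giving properties 1 and 2) the integer $i(f_u,\mathbf{S}^1)$ is independent of $u$. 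Finally $f_1(\alpha(t))-\alpha(t)=-e^{2i\pi t}=e^{2i\pi(t+1/2)}$, so one may take $\theta(t)=t+1/2$ and read off $i(f_1,\mathbf{S}^1)=\theta(1)-\theta(0)=1$; therefore $i(\tilde f,\mathbf{S}^1)=1$ and $i(f,\CaC)=1$.

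The only real (and minor) obstacle is that one cannot run this linear homotopy directly on a general simple closed curve $\CaC$, because $\overline{\mbox{\rm int}\,\CaC}$ need not be convex, nor even star-shaped, so a convex combination of $f(x)$ with an interior point may leave $\overline{\mbox{\rm int}\,\CaC}$ and collide with $x$; passing to $\mathbf{S}^1$ first — where $\overline{\mathbf{D}^2}$ is convex — is exactly what removes this difficulty. Two small points of care: the Schoenflies homeomorphism must be taken orientation preserving, since the index along a closed curve is \emph{not} conjugation invariant under orientation-reversing homeomorphisms; and the homotopy invariance invoked is the one that deforms the map $f$ (through maps without fixed point on the curve) rather than the curve itself — both facts follow at once from lifting through the covering $t\mapsto e^{2i\pi t}$, just as in Lemmas~\ref{ilem1} and \ref{ilem2}.
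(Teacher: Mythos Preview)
Your proof is correct and is essentially the paper's own argument: reduce to $\mathbf{S}^1$ via Schoenflies and use the linear homotopy $u\mapsto uf(e^{2i\pi t})-e^{2i\pi t}$ (your $(1-u)\tilde f(x)-x$ is the same homotopy, reparametrized) to deform to the constant map $0$, where the index is read off as $1$. Your version is just more explicit about why the Schoenflies reduction is legitimate (orientation and property~5), which the paper leaves implicit.
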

\begin{proof}
 Once again we apply Schoenflies theorem to reduce the proof to the case $\CaC = {\bf S}^1$ and consider the homotopy $F(u, t) =
uf(e^{2i\pi t}) - e^{2i\pi t}$.
\end{proof}

The next Lemma is the key to the ingenious index computation of Brouwer giving the proof of Theorem~\ref{A}. A variant of this computation was
rediscovered (but unpublished !) in the eighties by Bell, see \cite{Akis, Blokh-...}.

We deal with the following situation: $h$ is an orientation preserving homeomorphism of ${\bf R}^2$, $\CaC$ is a simple closed
curve in ${\bf R}^2$ positively oriented ($\mbox{\rm int}\CaC$ lies on the left of $\CaC$) and $K$ is a $h$-invariant continuum inside $\CaC$.

We suppose there exist successive points $p_0,p_1, \ldots p_n \in \CaC$ with $p_0 = p_n$ and disjoint (except perhaps for their endpoints in $K$) 
irreducible arcs $\rho_0, \rho _1, \ldots , \rho _n$ from $p_i$ to
$K$, $0 \leq i \leq n$, $\rho_0 = \rho_n$, such that $h(\rho _i p_ip_{i+1}\rho _{i+1}) \bigcap$ $\rho _i p_ip_{i+1}\rho _{i+1} = \emptyset$.  Let
$\Omega _i$ be the
bounded region
determined by $K \bigcup \rho _i \bigcup p_ip_{i+1} \bigcup \rho _{i+1}$.

\begin{center}
\begin{tikzpicture}[scale=1] \useasboundingbox (0,0) rectangle (12,5); 
\draw  (1,2) .. controls (1,1) and (3,0) .. (6,0) .. controls (9,0) and (11,1) .. (11,2) .. controls (11,3) and (9,4) ..
(6,4) .. controls (3,4) and (1,3) .. (1,2); 
\draw (.85,2.15) -- (1,2) -- (1.15,2.15);
\draw (6,4) .. controls (6.5,3) .. (6.3,1.4) 
(1,2) node[left]{$\cal C$} (10,2) node[above]{$K$} 
(6,4) node[above]{$p_i$}  
(6.3,3) node[right]{$\rho_i$} (4,3.95) node[above]{$p_{i+1}$}  
(3.4,3) node{$\rho_{i+1}$} (5,3) node{$\Omega_i$};
\draw [very thick] plot[smooth] coordinates{(10,2) (9,2.5) (8,1.8) (7,2.3) (6.3,1.4) (5,2.2) (4.5,1.9) (3,2.5) (2,1.3)};
\draw [out=-130,in=110] (4,3.95) to (4.5,1.9); 
\fill (4,3.9) circle (1pt) (6,4) circle (1pt);
\end{tikzpicture}
\end{center}

Let $\xi$ be the vector field $\xi (x) = h(x) -x$ which is supposed to be without zero on $\CaC$. We have: 
\begin{lem}
\label{Klem}
 There exist a non zero vector field $\xi'$ on $\CaC$ with endpoints in $\mbox{int}\CaC \bigcup \CaC$ such that $i(\xi, \CaC)- i(\xi',
\CaC) = k \in {\bf N}$. And so $i(\xi, \CaC) = 1 + k$. That integer $k$ is given by the number of $i$ such that $\Omega _i \subset h(\Omega
_i)$.
\end{lem}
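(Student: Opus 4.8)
The strategy is to build the auxiliary vector field $\xi'$ piece by piece over the arcs $\rho_i p_i p_{i+1}\rho_{i+1}$ that together sweep out all of $\CaC$ (recall $p_0=p_n$, $\rho_0=\rho_n$, so these arcs concatenate into $\CaC$), and to compare $i(\xi,\cdot)$ with $i(\xi',\cdot)$ arc by arc using Lemma~\ref{ilem2}. On the $i$-th arc $\alpha_i := \rho_i p_i p_{i+1}\rho_{i+1}$, the endpoints both lie in $K$, and the key geometric input is the hypothesis $h(\alpha_i)\cap\alpha_i=\emptyset$: this means $\xi=h-\mathrm{id}$ is a genuine nonzero vector field along $\alpha_i$ whose value at each endpoint $q\in\alpha_i\cap K$ is $h(q)-q$, pointing from $q$ to $h(q)$. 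I would define $\xi'$ on $\alpha_i$ to be a nonzero vector field agreeing with $\xi$ at the two endpoints of $\alpha_i$, but whose image $h(\alpha_i)$ is "pushed" to lie entirely inside $\Omega_i\cup\partial\Omega_i$ (or, in the complementary case, outside $\Omega_i$); since $\Omega_i$ is a Jordan domain whose boundary meets $\mathrm{Im}\,\alpha_i$ only along part of $\CaC$ and $K$, and since $h$ is an orientation-preserving homeomorphism hence isotopic to the identity, such a replacement can be carried out compatibly at the shared endpoints $p_i\in\rho_i$ between consecutive arcs. The global $\xi'$ is then nonzero on $\CaC$ with endpoints in $\mathrm{int}\,\CaC\cup\CaC$, so Lemma~\ref{ilem3} gives $i(\xi',\CaC)=1$.

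For the term-by-term comparison, on each arc $\alpha_i$ apply Lemma~\ref{ilem2}: if $\Omega_i\not\subset h(\Omega_i)$, then $h(\alpha_i)$ and $\xi'(\alpha_i)$ — both avoiding $\alpha_i$ and sharing endpoints — can be taken to lie in the simply connected complement of $\mathrm{Im}\,\alpha_i$ together with a suitable half-line to infinity, so case (1) of Lemma~\ref{ilem2} gives $i(\xi,\alpha_i)-i(\xi',\alpha_i)=0$; if instead $\Omega_i\subset h(\Omega_i)$, then the images of $h$ and of (the reverse of) $\xi'$ together bound a Jordan curve enclosing $\mathrm{Im}\,\alpha_i$ with the correct (positive) orientation coming from the fact that $h$ preserves orientation and pushes $\Omega_i$ across itself, so case (2) gives a difference of $+1$. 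Summing over $i=0,\dots,n-1$ and telescoping (the $\rho_i$ pieces cancel between $\alpha_{i-1}$ and $\alpha_i$, or rather contribute the same to $\xi$ and $\xi'$), one obtains $i(\xi,\CaC)-i(\xi',\CaC)=k$ where $k=\#\{i:\Omega_i\subset h(\Omega_i)\}\in\mathbf N$, and combined with $i(\xi',\CaC)=1$ this yields $i(\xi,\CaC)=1+k$.

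The main obstacle is the careful construction of $\xi'$ near the common endpoints $p_i$ and the verification that the two cases of Lemma~\ref{ilem2} genuinely apply — in particular checking that in the contracting case ($\Omega_i\not\subset h(\Omega_i)$) one can choose a half-line $L$ from an endpoint of $\alpha_i$ to infinity disjoint from the images of both $h$ and $\xi'$ off that endpoint, and that in the expanding case the Jordan curve formed by the two images really does enclose $\mathrm{Im}\,\alpha_i$ with positive orientation. Here the hypothesis that the arcs $\rho_i$ are irreducible (meeting $K$ only at their endpoints) and pairwise disjoint except at endpoints is what makes the regions $\Omega_i$ well-defined Jordan domains and lets the local pictures be glued; the orientation bookkeeping is where Brouwer's cleverness lies, and where I expect the argument to require the most care. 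I would also need to check the degenerate possibility that some $h(q)-q$ vanishes is excluded precisely because $h(\alpha_i)\cap\alpha_i=\emptyset$ forces $h$ to have no fixed point on any $\alpha_i$, hence none on $\CaC=\bigcup_i\alpha_i$, consistent with the standing assumption that $\xi$ is zero-free on $\CaC$.
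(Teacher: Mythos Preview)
Your proposal has the right overall shape --- decompose, compare piecewise via Lemma~\ref{ilem2}, and conclude with Lemma~\ref{ilem3} --- but it rests on a geometric misreading that leaves a genuine gap.

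The arcs $\alpha_i=\rho_i p_ip_{i+1}\rho_{i+1}$ do \emph{not} concatenate into $\CaC$: each $\rho_i$ runs from $p_i\in\CaC$ down to a point of $K\subset\mbox{int}\,\CaC$, so $\alpha_i$ is an arc with both endpoints in $K$, and consecutive $\alpha_{i-1},\alpha_i$ overlap along the whole arc $\rho_i$ (traversed in opposite directions), not merely at $p_i$. For your telescoping to work you would need $\xi'=\xi$ along every $\rho_i$; but then $\xi'(p_i)=h(p_i)-p_i$, and nothing in the hypotheses prevents $h(p_i)$ from lying in $\mbox{ext}\,\CaC$ (indeed this is exactly what happens in the paper's pictures for cases 1 and 3). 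So the field $\xi'$ you describe cannot be guaranteed to have its endpoints in $\mbox{int}\,\CaC\cup\CaC$, and Lemma~\ref{ilem3} does not apply to it.

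This is precisely the obstacle the paper's proof overcomes, and the idea you are missing is an intermediate field $\tilde\xi$ built from auxiliary points $q_i\in\rho_i$. On $\rho_i$ traversed from $p_i$ toward $K$, set $q_i$ to be the last point of $h^{-1}(\CaC)\cap\rho_i$ (or $q_i=p_i$ if this intersection is empty); by construction $h(q_i)\in\CaC$. Reparametrize each $p_ip_{i+1}\subset\CaC$ by an orientation-preserving homeomorphism $\phi_i$ onto $q_ip_ip_{i+1}q_{i+1}$ and put $\tilde\xi(x)=h(\phi_i(x))-x$. A short homotopy argument (the concatenation of the $\phi_i$ is a degree-one self-map of $\CaC$ fixing $p_0$, hence homotopic to the identity) gives $i(\tilde\xi,\CaC)=i(\xi,\CaC)$. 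Now $\tilde\xi$ takes the value $h(q_i)-p_i$ at each $p_i$, with endpoint on $\CaC$, so one can legitimately compare $\tilde\xi$ to an inward-pointing $\xi'$ arc by arc on the genuine subarcs $p_ip_{i+1}$ of $\CaC$. The case analysis (split into the three exhaustive possibilities $h(\Omega_i)\cap\Omega_i=\emptyset$, $h(\Omega_i)\subset\Omega_i$, $\Omega_i\subset h(\Omega_i)$) and the application of Lemma~\ref{ilem2} then go through essentially as you outline --- but for $\tilde\xi$ versus $\xi'$ on $p_ip_{i+1}$, not for $\xi$ on $\alpha_i$.
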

\begin{proof} The second sentence of the Lemma follows from the fact that $i(\xi', \CaC)=1$ since the endpoint of $\xi'\in \hbox{int}\CaC \bigcup
\CaC$
(see Lemma~\ref{ilem2}).

On $\rho _i$ described from $p_i$ towards $K$ define $q_i$ as $p_i$ if $h^{-1}(\CaC) \bigcap \rho_i= \emptyset$ or the last point
of $h^{-1}(\CaC) \bigcap \rho_i$ if this set is not empty. Let $\phi _i$ be an orientation preserving homeomorphism from $p_ip_{i+1}$ to
$q_ip_ip_{i+1}q_{i+1}$ and let $\tilde \xi$ be the vector field along $\CaC$ defined by $\tilde \xi (x) = h(\phi _i(x))-x$ for $x \in
p_ip_{i+1}$ (this
is non zero since $h(\rho_i p_i p_{i+1}\rho_{i+1}) \bigcap p_i p_{i+1} = \emptyset$).

One has obviously $i(\xi , \CaC) = \sum _{i=0}^{n-1}i(\xi, p_ip_{i+1})$ and also $i(\xi , \CaC) = \sum _{i=0}^{n-1}i(\tilde \xi, p_ip_{i+1})$.
 Indeed, the path composition of the maps $\phi_i$ and $\phi _{i+1}$ from $p_ip_{i+1}p_{i+2}$ to $q_ip_ip_{i+1}q_{i+1}p_{i+1}p_{i+2}q_{i+2}$ is 
homotopic rel endpoints to a map onto the arc $q_ip_ip_{i+1}p_{i+2}q_{i+2}$. Combining all the $\phi _i$ we get that $\sum_{i=0}^{n-1} i(\tilde
\xi
, p_ip_{i+1}) = i(\hat \xi, \CaC)$ where $\hat \xi = h(\phi (x))-x$ and $\phi$ is an orientation preserving homeomorphism of $\CaC$ fixing $p_0$
and therefore homotopic
to the identity rel $p_0$, so that $i(\hat \xi, \CaC) = i(\xi, \CaC)$.

We now distinguish three cases.

1) $h(\Omega _i) \bigcap \Omega _i = \emptyset$.

\begin{center}
\begin{tikzpicture}[scale=1] \useasboundingbox (0,0) rectangle (12,5); 
\draw plot[smooth] coordinates{(1,2) (6,2.5) (6.5,3) (4,4) (6,4.5) (8,4.5) (10,4.5) (11,4.5) (11.5,4.5)}; 
\draw [very thick] plot[smooth] coordinates{(11.5,.5) (11,.4) (10,.5) (8,.6) (6,.4) (3,.5)};
\draw [out=-130,in=110] (11,4.5) to (11,.4); 
\draw [out=-80,in=110] (10,4.5) to (10,.5);
\draw [out=-130,in=110] (8,5) to (8,.6);
\draw [out=-130,in=80] (6,5) to (6,.4);
\draw [out=10,in=-10] (6,5) to (7.8,5);
\draw [dashed] plot[smooth] coordinates{(7.7,4.5) (7.3,4) (7,3) (6.5,2) (5.7,2.5)};
\draw (10.5,.5) node[above]{$\Omega_i$};
\draw (7,.5) node[above]{$h(\Omega_i)$};
\draw (1,2) node[above]{$\cal C$};
\draw (3,.5) node[above]{$K$};
\draw (11,4.5) node[above]{$p_i$} (10,4.5) node[above]{$p_{i+1}$};
\fill (11,4.5) circle (1.5pt) (10,4.5) circle (1.5pt);
\draw (6.5,2) node[below]{$\beta_i$};
\draw (6,2) node[left]{$h(q_{i+1})$};
\fill (5.75,2.45) circle (1.5pt);
\draw (7.7,4.2) node[right]{$h(q_i)$};
\fill (7.7,4.5) circle (1.5pt);
\draw (6,5) node[left]{$h(p_{i+1})$} (8,5) node[right]{$h(p_i)$};
\fill (6,5) circle (1.5pt) (8,4.98) circle (1.5pt);
\fill (10.4,3) circle (1.5pt) (10.05,3.5) circle (1.5pt);
\draw (10.4,3) node[right]{$q_i$} (10.05,3.5) node[left]{$q_{i+1}$};
\draw (1.2,1.8)--(1,2)--(1.2,2.2);
\draw (6.7,2.1)--(6.5,2)--(6.6,2.3);
\end{tikzpicture}
\end{center}

Choose any arc $\beta _i$ from $h(q_i)$ to $h(q_{i+1})$ inside $\CaC$ except for its endpoints. Let $\lambda$ and $\mu$ be parametrisations by
$[0, 1]$ of
the arcs $p_ip_{i+1}$ and $\beta _i$ respectively and define $\xi '$ on $p_ip_{i+1}$ by $\xi ' (\lambda (t)) = \mu (t) - \lambda (t)$. One has
$i(\tilde
\xi, p_ip_{i+1}) = i(\xi ', p_ip_{i+1})$ by Lemma~\ref{ilem2}(1) since the arc $p_ip_{i+1}$ lies outside the Jordan curve made of $\beta _i$ and
$h(q_ip_ip_{i+1}q_{i+1})$.

2) $h(\Omega _i) \subset \Omega_i$.

\begin{center}
\begin{tikzpicture}[scale=1] \useasboundingbox (0,0) rectangle (12,5); 
\draw plot[smooth] coordinates{(1,2) (6,2.5) (6.5,3) (4,4) (6,4.5) (8,4.5) (10,4.5) (11,4.5) (11.5,4.5)}; 
\draw [very thick] plot[smooth] coordinates{(11.5,.5) (11,.4) (10,.5) (8,.6) (6,.4) (3,.5)};
\draw [out=-130,in=110] (11,4.5) to (11,.4);
\draw [out=-90,in=110] (6,2.5) to (6,.4);
\draw (1,2) node[above]{$\cal C$};
\draw (3,.5) node[above]{$K$};
\draw (11,4.5) node[above]{$p_i=q_i$};
\fill (11,4.5) circle (1.5pt);
\draw (5.65,2.5) node[above]{$p_{i+1}=q_{i+1}$};
\fill (6,2.5) circle (1.5pt);
\draw (1.2,1.8)--(1,2)--(1.2,2.2);
\draw (10,.5)--(10,3)--(8,3)--(8,.6);
\draw [out=-70,in=80] (8.5,4.2) to (8.5,3.3);
\draw (8.5,3.7) node[right]{$h$};
\draw (8.4,3.4)--(8.5,3.3)--(8.6,3.4);
\end{tikzpicture}
\end{center}

In that case $q_i = p_i$ and $q_{i+1} = p_{i+1}$ and we let $\xi ' = \tilde \xi$ along $p_i p_{i+1}$. Obviously, $i(\tilde \xi, p_ip_{i+1}) =
i(\xi ', p_ip_{i+1})$

3) $\Omega _i \subset h(\Omega _i)$.

\begin{center}
\begin{tikzpicture}[scale=1] \useasboundingbox (0,0) rectangle (12,5); 
\draw plot[smooth] coordinates{(1,2) (6,2.5) (6.5,3) (4,4) (6,4.5) (8,4.5) (10,4.5) (11,4.5) (11.5,4.5)}; 
\draw [very thick] plot[smooth] coordinates{(11.5,.5) (11,.4) (10,.5) (8,.6) (6,.4) (3,.5)};
\draw [out=-130,in=110] (11,5) to (11,.4); 
\draw [out=-80,in=110] (10,4.5) to (10,.5);
\draw [out=-110, in=90] (7.7,4.5) to (7.7,.6);
\draw [out=-130,in=80] (6,5) to (6,.4);
\draw [out=10,in=170] (6,5) to (11,5);
\draw [dashed] plot[smooth] coordinates{(10.69,4.5) (9,4) (8,3) (7,2) (6,2.2) (5.75,2.45)};
\draw (7,2) node[below]{$\beta_i$};
\draw (7.1,1.95)--(7,2)--(7.1,2.15);
\draw (1,2) node[above]{$\cal C$};
\draw (3,.5) node[above]{$K$};
\draw (10,4.5) node[above]{$p_i$} (10.6,4.28) node[right]{$h(q_i)$};
\fill (10.65,4.5) circle (1.5pt) (10,4.5) circle (1.5pt);
\draw (6,2) node[left]{$h(q_{i+1})$};
\fill (5.75,2.45) circle (1.5pt);
\draw (7.7,4.2) node[right]{$p_{i+1}$};
\fill (7.7,4.5) circle (1.5pt);
\draw (6,5) node[left]{$h(p_{i+1})$} (11,5) node[right]{$h(p_i)$};
\fill (6,5) circle (1.5pt) (11,5) circle (1.5pt);
\draw (1.2,1.8)--(1,2)--(1.2,2.2);
\end{tikzpicture}
\end{center}

We define $\xi'$ as in case 1). We have $i(\tilde \xi , p_ip_{i+1}) - i(\xi ', p_ip_{i+1}) = 1$ since  the arc $p_ip_{i+1}$ is contained inside
the Jordan
curve made of $ \beta_i$ and $h(q_ip_ip_{i+1}q_{i+1})$ and $h$ is orientation preserving so that $h(q_ip_ip_{i+1}q_{i+1})$ is oriented from
$h(q_i)$ to $h(q_{i+1})$, see Lemma~\ref{ilem2}.

Since $h(\rho _ip_ip_{i+1}\rho _{i+1}) \bigcap \rho _ip_ip_{i+1}\rho _{i+1} = \emptyset$ these three cases exhaust all possibilities and this
concludes the
proof of the Lemma.
\end{proof}
\bigskip

\section{Proof of Theorem~\ref{A}}

The next Lemma is very classical \cite{Schoenflies} and is a key step to show that ${\RR}^2 \setminus K$ is homeomorphic to ${\RR}^2 \setminus \{
0
\}$ if and only if $K$ is a non separating continuum in ${\RR}^2$ (which is not really needed or used here but explain the Brouwer terminology ``circular
continum'': ${\RR}^2 \setminus K$ looks like a circular region).

We will follow a presentation of Sieklucki \cite{Sieklucki}.

\begin{lem}
\label{dlem}
 Let $K \subset \RR^2$ a non empty non separating continuum. Then there exists a sequence $B_n$,$n \geq 0$, of topological closed discs such that
\begin{enumerate}
\item $K  = \bigcap B_n$
\item $B_{n+1} \subset B_n$
\item For every $b \in \mbox{\rm Fr}B_n$ there exists a rectilinear segment $\rho = \rho (b)$ from $b$ to some point $x(b) \in \mbox{\rm Fr}K$
such that
$\rho (b) \setminus \{b, x(b) \} \subset \mbox{\rm Int}B_n \setminus K$, $\mbox{\rm diam}(\rho (b))< \sqrt{2}.2^{-n}$ and $\rho (b) \bigcap \rho
(b')$ is empty or
reduced to $x(b) = x(b')$ ($b \not = b'$).
\end{enumerate}

\end{lem}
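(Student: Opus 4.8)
The plan is to obtain the $B_n$ by the classical trick of approximating $K$ from the outside by a Jordan domain assembled from grid squares, and then to read the radial segments $\rho(b)$ off that grid. So for each $n$ I would fix a square grid $\mathcal{G}_n$ of mesh $<2^{-n}$, in general position, and let $P_n$ be the union of the finitely many closed squares of $\mathcal{G}_n$ meeting $K$. Since $K$ is a continuum, $P_n$ is connected (a separation of $P_n$ into two disjoint nonempty closed sets would separate $K$, every square of $P_n$ meeting $K$), and $K\subset\mathrm{Int}\,P_n$ because every grid square containing a given point of $K$ lies in $P_n$. Adjoining to $P_n$ all the bounded components of $\RR^2\setminus P_n$ produces a compact connected simply connected finite polygon $B_n^0$ with $K\subset\mathrm{Int}\,B_n^0$.

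Next I would turn $B_n^0$ into an honest closed disc. Its only failure to be one lies at finitely many \emph{pinch vertices} --- grid vertices about which the four squares of $B_n^0$ alternate in/out --- and a small local surgery at each (adjoin a tiny connecting cell, then fill any tiny bounded complementary component so created) yields a closed topological disc $B_n$, with $K\subset\mathrm{Int}\,B_n$, whose frontier $\mathrm{Fr}\,B_n$ is a simple closed polygon, all the new material confined to arbitrarily small discs about the pinch vertices. I would run the construction inductively to get the nesting in (2): having built $B_n$, choose $\mathcal{G}_{n+1}$ not merely of mesh $<2^{-(n+1)}$ but fine enough --- legitimate since $K$ is compact inside the open set $\mathrm{Int}\,B_n$ --- that $P_{n+1}$, and hence $B_{n+1}$ (surgeries included), stays inside $\mathrm{Int}\,B_n$, so that $B_{n+1}\subset\mathrm{Int}\,B_n\subset B_n$.

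The non-separation hypothesis is used exactly once, for $\bigcap_n B_n=K$: the inclusion $\supseteq K$ is clear, and if $z\notin K$ then, $\RR^2\setminus K$ being connected, there is a path from $z$ to infinity avoiding $K$, hence at some positive distance $\delta$ from $K$; once $\sqrt2\cdot(\mathrm{mesh}\ \mathcal{G}_n)<\delta$ this path avoids $P_n$ and joins $z$ to the unbounded component of $\RR^2\setminus P_n$, so $z\notin B_n^0$, and then $z\notin B_n$ since the surgeries sit near $K$. As the $B_n$ form a decreasing sequence of compacta with intersection $K$, it follows in particular that $B_n$ lies in any prescribed neighbourhood of $K$ once $n$ is large (not needed for the statement, but what makes the $B_n$ useful afterwards).

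For the segments: call $S$ a \emph{boundary square} at level $n$ if $S\in\mathcal{G}_n$, $S\subset B_n$, and $S$ carries an edge of $\mathrm{Fr}\,B_n$; off the finitely many surgery discs every $b\in\mathrm{Fr}\,B_n$ lies in such a square. A boundary square $S$ meets $\mathrm{Fr}\,K$: were $S\cap K\subset\mathrm{int}\,K$ then $S\cap K$, being open and closed in the connected set $S$, would equal $S$, so $S\subset\mathrm{int}\,K$, whence every square touching $S$ lies in $P_n$ and $S$ carries no frontier edge. Fix once and for all $c_S\in S\cap\mathrm{Fr}\,K$, and for $b\in\mathrm{Fr}\,B_n\cap S$ let $x(b)$ be the first point of $K$ on the segment $[b,c_S]$ and put $\rho(b)=[b,x(b)]$. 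Then $\rho(b)$ is a rectilinear segment inside the convex square $S$, so $\mathrm{diam}\,\rho(b)<\sqrt2\cdot 2^{-n}$; $\rho(b)\setminus\{b,x(b)\}$ avoids $K$ by the choice of first hit; and it lies in $\mathrm{Int}\,B_n$ because, $x(b)$ being in $K\subset\mathrm{Int}\,B_n$, the edges of $S$ incident to $x(b)$ lie off $\mathrm{Fr}\,B_n$, so $\mathrm{Fr}\,B_n\cap S$ is confined to the remaining edges and $\rho(b)$ runs from $b$ into $\mathrm{Int}\,P_n$. Non-crossing: segments in distinct boundary squares meet at most along a shared grid edge, which their interiors avoid; two in the same square $S$ lie on the segments $[b,c_S]$ and $[b',c_S]$, which meet only at $c_S$ since $b\ne b'$ lie on edges of $S$ not through $c_S$, so $\rho(b)\cap\rho(b')$ is empty or the common point $x(b)=x(b')=c_S$. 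A little foresight in the surgeries, together with a generic choice of grid excluding the finitely many degenerate incidences (a segment along an edge, or through a corner, of its square), extends this to the surgery arcs and gives (3). The step I expect to be the real obstacle is precisely the passage from the crude approximation $B_n^0$ to a genuine topological disc $B_n$, and checking that the radial segments survive the surgery; the use of the hypothesis, by contrast, is the soft connectedness argument above.
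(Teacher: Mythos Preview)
Your overall strategy --- approximate $K$ by a filled-in union of grid squares and read the segments $\rho(b)$ off the grid geometry --- is exactly the paper's. The use of non-separation (a path to infinity gives $\bigcap_n B_n=K$) is the same, and your explicit discussion of pinch vertices and the local surgery is something the paper simply elides when it asserts that $B_n$ is a disc; you are being more honest there. The paper, on the other hand, gets the nesting $B_{n+1}\subset B_n$ for free by using the fixed dyadic grids (each square of $C_{n+1}$ sits inside a square of $C_n$, so $\mathrm{Fr}\,Q_{n+1}\subset Q_n$), where you choose your grids inductively.

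The substantive difference is in the construction of $\rho(b)$ and the non-crossing argument. You fix a target $c_S\in S\cap\mathrm{Fr}\,K$ per boundary square and take $x(b)$ to be the first hit of $K$ on $[b,c_S]$; non-crossing within a square is then automatic because all segments aim at a common point, but you need $c_S\in\mathrm{int}\,S$ (otherwise the various claims about $\rho(b)\cap\partial S=\{b\}$ fail), and this forces the genericity hypothesis on the grid that you invoke at the end. The paper instead takes $x(b)$ to be a \emph{nearest} point of $K\cap Q$ to $b$, and proves non-crossing within a square by a short triangle-inequality argument (its Lemma~3.2): if $c\in\rho(b)\cap\rho(b')$ then $|c-x(b)|=|c-x(b')|$ by symmetry, which forces $c=x(b)=x(b')$. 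This is cleaner --- no per-square choice, no genericity --- and it is the one trick you are missing. Either route works, but the nearest-point device is worth knowing.
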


\begin{proof} For $n\geq 1$, we consider the tiling $C_n$ of the plane by the closed squares centered at $(\dfrac{k}{2^n}, \dfrac{l}{2^n})$ of
side length $\dfrac{1}{2^n}$ ($k, l \in \ZZ$). Let $Q_n$ be the union of all squares of $C_n$ which meet $K$ and $B_n$ be the union of $Q_n$ and
all bounded components of ${\RR}^2 \setminus Q_n$. Then $B_n$ is a topological disc and $B_{n+1} \subset B_n$ (since $\mbox{\rm Fr}Q_{n+1}
\subset Q_n$).

To show that $K = \bigcap _{n>0}B_n$, let $x \in {\RR}^2 \setminus K$ and use that $K$ is non separating to find a half line $l$ from $x$ to $\infty$ 
such that $l \bigcap K
=\emptyset$. Then $\mbox{\rm d}(l, K)>0$ and if $\dfrac{\sqrt{2}}{2^n} < \mbox{\rm d}(l, K)$ then $x \notin B_n$ (otherwise there exists $y \in
l\bigcap \mbox{\rm Fr}B_n$ and $\mbox{\rm d}(y, K)< \dfrac{\sqrt{2}}{2^n} < \mbox{\rm d}(l, K)$ which is absurd).

As for 3), given $b \in \mbox{\rm Fr}B_n$, then  $b$ belongs to a side of some square $Q$ of $C_n$. Let $x(b)$ be some nearest point of $K$ in
$Q$ and $\rho (b)$ be the rectilinear segment from $b$ to $x(b)$. Given $b, b' \in \mbox{\rm Fr}B_n$, if $\rho(b)$ and $\rho(b')$ do not belong to the
same square, clearly $\rho (b) \bigcap \rho (b')$ is empty or reduced to $x(b)=x(b')$. If $\rho(b)$ and $\rho(b')$ belong to the same square $Q$, 
 we conclude with the following elementary Lemma.
\end{proof}

\begin{lem}
 \label{Qlem}
Let $Q$ is a square and $K$ a closed subset in $Q$. For $b \in \mbox{\rm Fr}Q$, let $x(b)$ be a nearest point of $K$ in
$Q$ and $\rho (b)$ be the rectilinear segment from $b$ to $x(b)$. Then $\rho (b) \bigcap \rho(b')$ is empty or
reduced to $x(b) = x(b')$ if $b \not = b'$.
\end{lem}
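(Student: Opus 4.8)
The plan is to reduce everything to the single assertion that \emph{if the two segments $\rho(b)$ and $\rho(b')$ meet at all, then their terminal points in $K$ coincide}, i.e.\ $x(b)=x(b')$ — which is exactly the alternative claimed. The only tool is a one‑line ``squeeze'' with the triangle inequality, using nothing about $K$ or the $b$'s except that $x(b)$ (resp.\ $x(b')$) is a nearest point of $K$ to $b$ (resp.\ to $b'$); the square $Q$ serves only to guarantee that a nearest point exists, so I would state and prove this for an arbitrary closed set $K$ and arbitrary $b\neq b'$.

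So suppose $y\in\rho(b)\cap\rho(b')$. Since $y$ lies on the segment $[b,x(b)]$ and on the segment $[b',x(b')]$ we may write $|b-x(b)|=|b-y|+|y-x(b)|$ and $|b'-x(b')|=|b'-y|+|y-x(b')|$. Regrouping and applying the triangle inequality twice,
\[
|b-x(b)|+|b'-x(b')|=\bigl(|b-y|+|y-x(b')|\bigr)+\bigl(|b'-y|+|y-x(b)|\bigr)\ \geq\ |b-x(b')|+|b'-x(b)|.
\]
On the other hand $|b-x(b')|\geq|b-x(b)|$ and $|b'-x(b)|\geq|b'-x(b')|$ by the minimality of $x(b)$ and $x(b')$, so the reverse inequality holds as well and all the quantities above are in fact equal. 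In particular this forces $y\in[b,x(b')]$, $y\in[b',x(b)]$, $|b-x(b')|=|b-x(b)|$, and $|b'-x(b)|=|b'-x(b')|$.

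To conclude I would split according to the position of $y$. If $y\neq b$, then $[b,x(b)]$ and $[b,x(b')]$ are two segments issuing from $b$ that both pass through the point $y\neq b$, so $x(b)$ and $x(b')$ both lie on the ray from $b$ through $y$; being equidistant from $b$, they coincide. If instead $y=b$ — hence $y\neq b'$, since $b\neq b'$ — the relations just obtained give $b\in[b',x(b)]$ and $b\in[b',x(b')]$, and the same ray argument from $b'$, together with $|b'-x(b)|=|b'-x(b')|$, yields $x(b)=x(b')$. In either case $x(b)=x(b')$, so $\rho(b)\cap\rho(b')$ is empty or the two segments terminate at a common point of $K$, which is the claim.

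The whole thing is elementary; the only point needing a little care is precisely this endpoint bookkeeping, handled by the case split $y\neq b$ versus $y=b$. I would add a word of caution: ``reduced to $x(b)=x(b')$'' must be read as the assertion that the terminal points coincide, for in the degenerate configuration where $b$, $b'$ and the common nearest point are collinear along a side of $Q$ the segments $\rho(b)$ and $\rho(b')$ may genuinely share a whole subsegment (one contained in the other); but they still end at the same point of $K$, which is all that is used in Lemma~\ref{dlem}.
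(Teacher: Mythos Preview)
Your proof is correct and follows essentially the same triangle-inequality squeeze as the paper: take a common point, use that it lies on both segments to turn the minimality inequalities into equalities, and read off $x(b)=x(b')$. Your packaging is a bit more symmetric (summing the two distances at once), but the substance is the same.

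If anything, you are more careful than the paper in the final step. The paper concludes from the same equalities that the intersection point $c$ itself equals $x(b)=x(b')$; as you note, this is too strong in the collinear degenerate configuration (e.g.\ $b$, $b'$ and the common nearest point all on one line), where $\rho(b)\cap\rho(b')$ is a whole subsegment. Your case split $y\neq b$ versus $y=b$ cleanly gives the correct conclusion $x(b)=x(b')$, which is all that is actually used in Lemma~\ref{dlem}.
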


\begin{proof}
 Suppose there exists a point $c$ in $\rho (b) \bigcap \rho (b')$. The
inequalities
\[ \vert b-c\vert + \vert c-x(b) \vert = \vert b-x(b) \vert \leq \vert b-x(b') \vert \leq \vert b-c \vert + \vert c-x(b') \vert \]
 give $\vert c-x(b) \vert \leq \vert c-x(b') \vert$ and by symmetry  $\vert c-x(b) \vert = \vert c-x(b') \vert$. Therefore $\vert b-x(b') \vert
\geq \vert c-b \vert + \vert c-x(b) \vert = \vert c-b \vert + \vert c-x(b') \vert$ so that $\vert b-x(b') \vert = \vert b-c \vert + \vert c-x(b')
\vert$ and, by symmetry, $\vert b'-x(b) \vert = \vert b'-c \vert + \vert c-x(b) \vert$. This implies, if $b \not = b'$, that $c = x(b) = x(b')$.
 
\end{proof}

We can now complete the proof of Theorem~\ref{A}.
We have an orientation preserving homeomorphism $h$ of ${\bf R}^2$ and a non separating continuum $K$ in ${\bf R}^2$ such that $h(K)=K$. We have
to prove that $h$ admits a fixed point in $K$.

\begin{proof} Let us suppose that $h$ has no fixed point in $K$. Then we can find a neighborhood $V$ of $K$ and an $\epsilon >0$ such that
dist$(h(x), x)
> 3\epsilon$ on $V$. According to Lemma~\ref{dlem}, one can find a Jordan curve $\CaC $  contained in $V$  and the $\epsilon$-neighborhood of $K$
such that
$K
\subset \mbox{int}\CaC$, successive points $p_0,p_1, \ldots, p_n=p_0$ on $\CaC$ and disjoint arcs (except perhaps for their extremities on $K$)
$\rho _0,
\ldots ,
\rho _{n-1}$ from $p_i$ to $K$ such that diam$p_ip_{i+1}$ and diam$\rho _i$ are less than $\epsilon$ and consequentely $h(\rho _i p_ip_{i+1}\rho
_{i+1})
\bigcap \rho _i p_ip_{i+1}\rho _{i+1} = \emptyset$. We are then in position to apply Lemma~\ref{Klem} which implies that $ i(\xi , {\CaC}) >
0 $ in
contradiction to Lemma~\ref{ilem1}.

\end{proof}

\begin{rem} Another fixed point theorem of Brouwer \cite{Brouwer1912} is as follows.
\begin{thm}
\label{D}
 Let $h$ be an orientation preserving homeomorphism of ${\RR}^2$ and let $K$ be a non empty compact subset of ${\RR}^2$ such that $h(K)=K$
then $h$ admits a fixed point (in ${\RR}^2$).
\end{thm}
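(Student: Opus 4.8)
The plan is to reduce Theorem~\ref{D} to Theorem~\ref{A} by passing to a suitable invariant subcontinuum of $K$ on which $h$ has the non-separating property, or failing that, to produce directly a simple closed curve of positive $h$-index surrounding a fixed point. First I would observe that the hypothesis only asks for a fixed point somewhere in $\RR^2$, not in $K$, so the continuum $K$ serves merely as a bounded region trapping the dynamics. The natural first step is to consider the ``filled'' set $\hat K$, obtained by adjoining to $K$ all bounded complementary components of $\RR^2 \setminus K$; this $\hat K$ is a compact, $h$-invariant set (since $h$ is an orientation preserving homeomorphism of the plane, it permutes the bounded complementary components of an invariant compact set) whose complement is connected, i.e. $\RR^2 \setminus \hat K$ has a single unbounded component. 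If $\hat K$ happens to be connected, it is a non-separating continuum and Theorem~\ref{A} applies directly, giving a fixed point.

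The difficulty, and the step I expect to be the main obstacle, is that $K$ (hence $\hat K$) need not be connected: a compact invariant set may have many components permuted cyclically by $h$, and then no single component is invariant. The remedy is the classical observation about orientation preserving plane homeomorphisms: if $h$ permutes the components of $\hat K$ and $h$ has no fixed point, one can still extract a bounded invariant region. Concretely, I would look at the unbounded component $U$ of $\RR^2 \setminus \hat K$; since $\hat K$ is invariant, $U$ is invariant, and $\RR^2 \setminus U$ is a compact set whose complement $U$ is connected, so by the standard theory (Lemma~\ref{dlem}'s construction, or Schoenflies/prime-end arguments) $\RR^2 \setminus U$ can be approximated from outside by topological discs $B_n$ with $\bigcap B_n = \RR^2 \setminus U$. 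The point $\RR^2 \setminus U$ is itself compact, invariant, and \emph{non-separating} (its complement is the single connected set $U$), even though it may be disconnected; but the index computation of Lemma~\ref{Klem} only uses that $K$ is a compact invariant set inside $\CaC$ with arcs $\rho_i$ running from $\CaC$ to $K$, and nowhere uses connectedness of $K$.

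Thus the cleanest route is: assuming $h$ has no fixed point, set $X = \RR^2 \setminus U$ where $U$ is the unbounded component of $\RR^2 \setminus K$; this $X \supseteq K$ is compact, $h$-invariant, with connected complement. Pick a neighborhood $V$ of $X$ and $\epsilon > 0$ with $\mathrm{dist}(h(x),x) > 3\epsilon$ on $V$ (possible since $X$ is compact and the fixed point set is closed and disjoint from $X$). Apply the $B_n$-construction of Lemma~\ref{dlem} to $X$ in place of $K$: for $n$ large, $\mathrm{Fr}B_n$ gives a Jordan curve $\CaC \subset V$ with $X \subset \mathrm{int}\,\CaC$, together with successive points $p_i$ and short irreducible arcs $\rho_i$ from $p_i$ to $X$, all of diameter $< \epsilon$, so that $h(\rho_i p_i p_{i+1} \rho_{i+1}) \cap \rho_i p_i p_{i+1} \rho_{i+1} = \emptyset$. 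Lemma~\ref{Klem} then yields $i(\xi,\CaC) = 1 + k \geq 1$, contradicting Lemma~\ref{ilem1}, which asserts $i(\xi,\CaC) = 0$ since $h$ extends fixed-point-freely to $\mathrm{int}\,\CaC$ — wait, that last point is exactly where care is needed: $h$ need \emph{not} be fixed-point-free on $\mathrm{int}\,\CaC$, so Lemma~\ref{ilem1} does not apply. Instead the contradiction comes from the hypothesis the other way around: since we have \emph{assumed} $h$ has no fixed point at all, in particular none in $\mathrm{int}\,\CaC$, so Lemma~\ref{ilem1} \emph{does} apply and gives $i(\xi,\CaC)=0$, contradicting $i(\xi,\CaC)\geq 1$. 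Hence $h$ must have a fixed point. The only real work beyond citing Lemmas~\ref{dlem} and~\ref{Klem} is checking that the $B_n$-construction of Lemma~\ref{dlem} goes through verbatim for a non-separating compact set that is not assumed connected — inspecting that proof, connectedness of $K$ is used nowhere, only that $\RR^2 \setminus K$ has the half-line property, which holds for any set with connected complement.
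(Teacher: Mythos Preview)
Your route differs from the paper's and carries a genuine gap. The paper does not prove Theorem~\ref{D} in detail; it merely remarks that the statement follows (via a perturbation argument) from Brouwer's lemma that a fixed-point-free orientation preserving homeomorphism of $\RR^2$ has no periodic point --- a separate index computation, referenced to \cite{Brouwer1912, Guillou1994} rather than to Lemma~\ref{Klem}.

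Your attempt to rerun the proof of Theorem~\ref{A} on the filled set $X=\RR^2\setminus U$ breaks exactly where you claim it does not. Connectedness \emph{is} used in Lemma~\ref{dlem}: the assertion that $B_n$ is a topological disc requires $Q_n$ to be connected, and $Q_n$ is connected only because $K$ is. Take $K$ to be a $2$-periodic orbit $\{a,h(a)\}$; then $X=K$, and for large $n$ the set $Q_n$ is two disjoint small squares, so $\mathrm{Fr}\,B_n$ consists of two Jordan curves rather than one, and there is no single $\CaC$ in an $\epsilon$-neighbourhood of $X$ from which to run short arcs $\rho_i$ to $X$. Lemma~\ref{Klem} depends on connectedness just as essentially: the region $\Omega_i$ is well-defined as the bounded piece cut off from $\RR^2\setminus K$ by the arc $\rho_i p_i p_{i+1}\rho_{i+1}$ precisely because that arc, together with the \emph{connected} set $K$, bounds a Jordan domain, and the exhaustive trichotomy $h(\Omega_i)\cap\Omega_i=\emptyset$, $h(\Omega_i)\subset\Omega_i$, $\Omega_i\subset h(\Omega_i)$ at the end of that proof rests on this. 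When $\rho_i$ and $\rho_{i+1}$ land on different components of a disconnected $X$, the arc bounds nothing with $X$ and the index bookkeeping collapses. Disconnected invariant compacta --- exactly what Theorem~\ref{D} adds beyond Theorem~\ref{A} --- are thus not covered by your argument; handling them is where Brouwer's no-periodic-point lemma, the paper's stated route, does its work.
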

This result follows quickly (using a perturbation argument for example) from the fact that an orientation preserving homeomorphism of ${\RR}^2$ 
without fixed point has no periodic point which can also be proved by an index computation \cite{Brouwer1912, Guillou1994}.

Notice that the ``short'' proof of Theorem~\ref{A} by Hamilton \cite{Hamilton} and the ``short short'' proof by Brown \cite{Brown} are indeed
merely reduction of
Theorem~\ref{A} to the above result. In fact, if $h$ had no fixed point inside the non separating continuum $K$, these authors construct (by bare
hands for Hamilton, by a simple covering argument for Brown) another extension $h'$ to ${\RR}^2$ of the restriction of $h$ to $K$ which is
orientation preserving and without any fixed point: this is a contradiction to Theorem~\ref{D}.
 
\end{rem}

\section{Prime ends}
We give only a brief sketch of the theory of prime ends in a pre-Caratheodory style, based on the notion of accessible arc and the cyclic order
that can be given to equivalence classes of such arcs and as used by Brouwer. See  \cite{Milnor} or \cite{Pommerenke} for modern and
more complete expositions.

We consider a non empty continuum $K \subset {\bf S}^2 = {\RR}^2 \bigcup \{ \infty \}$ such that $U={\bf S}^2 \setminus K$ is non empty and connected.

\begin{defin}
 A point $x \in \mbox{\rm Fr}U=\mbox{Fr}K$ is said {\it accessible} (from $U$) if there is an arc $\gamma : [a, b] \rightarrow \overline{U}$ 
such
that $\gamma ([a, b)) \subset U$ and $\gamma (b)=x$. The point $x$ is the endpoint of $\gamma$ and $\gamma$ is an {\it access arc} to $x$.

We define an equivalence relation on the set of access arcs from $x_0$ to $\mbox{\rm Fr}U$ by $\gamma \sim \gamma '$ if $\gamma$ and $\gamma'$ 
have the same endpoint $x \in \mbox{Fr}U$ and $\gamma$ is isotopic
 to $\gamma'$ in $U \bigcup \{ x\}$ rel $\{ x\}$. 
Notice that the end point of the class $p=[ \gamma ]$ is well defined. 

\end{defin}

Some basic facts are:

 -The set of accessible points is dense in $\mbox{\rm Fr}U=\mbox{Fr}K$.
 
 -Given a finite number of distinct equivalence classes of access arcs $p_1, p_2, \ldots , p_n$,
 one can find disjoint access arcs $\gamma _1, \ldots , \gamma _n$ where $\gamma_i \in p_i$. 
 
 -Using a circle surrounding $K$ and meeting $\gamma _1, \ldots , \gamma _n$ (see Lemma~\ref{dlem}) we can transfer a cyclic order on this circle to the set 
$\{ p_1, p_2, \ldots ,p_n\}$ and thus define a cyclic order on the set of equivalence classes of access arcs (given coherent choices of orientation for the circles 
surrounding $K$; we will assume that $K$ is to the left of each such circle).  We can therefore
talk of the closed interval $[p, p']$ given two equivalence classes $p$ and $p'$.

-Given two distinct equivalence classes $p$ and $p'$, there exists a third one $p''$ such that $p<p''<p'$.

-Given an equivalence class $p$, there exist sequences of equivalence classes $(p_n)_{n\geq 0}$ and $(p'_n)_{n\geq 0}$ such that $\bigcap_{n\geq 0}[p_n , p'_n ]=\{ p\}$.

\begin{defin}
We now consider sequences of decreasing intervals $[p_n, p_n']$ such that $\bigcap _{n\geq 0} [p_n, p_n']$
 is empty or reduced to one point, where $p_n$ and $p_n'$ are sequences of equivalence classes of access arcs. Two such sequences $[p_n,
p_n']$ and
$[q_n, q_n']$ are considered equivalent if for each $n$ there exist $r$ such that $[p_n, p_n'] \supset [q_r,
q_r']$ and $s$ such that $[q_n, q_n'] \supset [p_s, p_s'$]. Equivalence classes of such sequences of intervals
 define the {\it prime ends}.
\end{defin}

Given the last fact above, equivalence classes of access arcs are naturally seen as prime ends.

The cyclic order on the equivalence classes of access arcs can be extended to the set of all prime ends and a classical result of the theory of ordered sets 
gives a cyclic order preserving
 bijection of the set of prime ends to the circle. If
we give the order topology to the set of prime ends such a bijection becomes a homeomophism. Also, any homeomorphism $h$ of $\overline{U}$ extend
to the set of equivalence classes of access arcs (by $h([\gamma ]) = [h(\gamma )]$) and so to the circle of prime ends.

\begin{defin}
 A prime end $p$ being defined by a sequence $[p_n, p_n']$, we define its {\it impression} as the set of all points of $\mbox{\rm
Fr}\overline{U}$ which are limits of a sequence of end points of access arcs $\beta _k$ such that $[\beta _k] \in [p_{n_k},
p_{n_k}']$ for some increasing subsequence $n_k$ of the integers.
\end{defin} 
This impression, denoted I($p$), does not depend on the choice of the
sequences $p_n$ and $p_n'$ and can be shown to be a subcontinuum of $\mbox{\rm Fr}U \subset {\bf S}^2$. The union over all prime ends of these
impressions form a covering of $\mbox{\rm Fr}U$ but notice that
 different prime ends may have the same impression and it is even possible that some impressions are equal to $\mbox{\rm Fr}U$ (see the
indecomposable continuum of Brouwer \cite{Brouwer1910, Rutt, Rogers}).

\begin{defin}
 A {\it cut} $c$ of $U$ is an arc $c : [a, b] \rightarrow \overline{U} \setminus \{ x_0 \}$ such that $c(a), c(b) \in \mbox{\rm Fr}U$ and $c(a, b)
\subset U$.

As is well known, a cut separate $U$ into exactly two regions and we will call the region not containing $\infty$ the bounded region determined by $c$ (and Fr$U$).
\end{defin}

\section{Proof of Theorem~\ref{B}}

We begin with some preliminary lemmas.

\begin{lem}
\label{Tlem}

 Let a simple closed curve ${\cal C}$ be composed of three consecutive arcs $\alpha$, $\beta$, $\gamma$ $: [0, 1] \rightarrow \RR^2$ with disjoint
interiors and $h : {\cal C}
\rightarrow {\RR}^2$ a map without fixed point. Suppose that
\begin{enumerate} 
\item $h(\alpha) \subset \mbox{\rm int}{\cal C} \bigcup {\cal C}$ and that $h(\alpha(0)) \in \mbox{\rm int}\alpha$.
\item $h(\beta(0)) = \beta(1)$ and $h(\beta) \bigcap \beta = \beta(1)$.
\item $\beta \setminus \beta(1)$ lies in the unbounded region of $\RR^2 \setminus h({\cal C})$. 
\item $h(\gamma) \bigcap \gamma = \emptyset$. 
\end{enumerate}

Then $i(h, {\cal C})= 1$.

\end{lem}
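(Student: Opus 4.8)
The plan is to reduce the statement to Lemma~\ref{ilem3}, by replacing $h$ on $\mathcal C$ by a fixed‑point‑free map into $\operatorname{int}\mathcal C\cup\mathcal C$ without changing the index. Set $a_0=\alpha(0)=\gamma(1)$, $a_1=\alpha(1)=\beta(0)$, $a_2=\beta(1)=\gamma(0)$, so that $\mathcal C$ is described by $\alpha\cdot\beta\cdot\gamma$ and $i(h,\mathcal C)=i(h,\alpha)+i(h,\beta)+i(h,\gamma)$. Since $h$ has no fixed point, hypotheses (2) and (4) give $a_1\notin h(\beta)$, $h(a_2)\notin\beta$ and $a_0,a_2\notin h(\gamma)$. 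First, deforming $h$ through fixed‑point‑free maps inside a small neighbourhood of $a_2$ in $\beta\cup\gamma$ — which leaves $i(h,\mathcal C)$ and hypotheses (1),(2),(4) unchanged, $a_2$ lying off the arc $\beta\cup\gamma$ whose complement is connected — we may assume in addition that $h(a_2)\in\operatorname{int}\mathcal C$. On $\alpha$ we keep $g=h$; by (1), $g(\alpha)\subset\operatorname{int}\mathcal C\cup\mathcal C$, with $g(a_0)=h(a_0)\in\operatorname{int}\alpha$ and $g(a_1)=h(a_1)=a_2$.

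On $\gamma$, hypothesis (4) says that $\gamma$ and $h(\gamma)$ are disjoint arcs, so $\RR^2\setminus(\gamma\cup h(\gamma))$ is connected and carries a proper arc $L_\gamma$ from $a_2$ to $\infty$ meeting $\gamma$ only at $a_2$ and missing $h(\gamma)$. In the simply connected set $\RR^2\setminus(L_\gamma\cup\gamma)$, which contains $h(\gamma)$, one homotopes $h|_\gamma$ rel $\{a_0,a_2\}$ to a map $g_\gamma$ whose image is an arc from $h(a_2)$ to $h(a_0)$ contained in $\operatorname{int}\mathcal C$ except for the endpoint $h(a_0)$. This homotopy avoids $\gamma$, hence has no fixed point on $\gamma$, so the computation in the proof of Lemma~\ref{ilem2}(1) gives $i(h,\gamma)=i(g_\gamma,\gamma)$, while $g_\gamma$ is fixed‑point‑free on $\gamma$ with $g_\gamma(\gamma)\subset\operatorname{int}\mathcal C\cup\mathcal C$.

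On $\beta$, the delicate case, (2) gives $h(\beta)\cap\beta=\{a_2\}$, but $\RR^2\setminus(\beta\setminus\{a_2\})$ is not simply connected and $h(\beta)$ may link $\beta$; this is where hypothesis (3) is used. Because $\beta\setminus\{a_2\}$ lies in the unbounded component of $\RR^2\setminus h(\mathcal C)$, the point $a_1$ is joined to $\infty$ by a proper arc $L_\beta$ meeting $\beta$ only at $a_1$ and missing $h(\mathcal C)\supset h(\beta)$; then $L_\beta\cup(\beta\setminus\{a_2\})$ is a proper arc with simply connected complement containing $h(\beta)$, and exactly as above one homotopes $h|_\beta$ rel $\{a_1,a_2\}$, inside this complement, to a map $g_\beta$ whose image is an arc from $a_2$ to $h(a_2)$ contained in $\operatorname{int}\mathcal C$ except for the endpoint $a_2$. (The slits $L_\gamma$, $L_\beta$ and the two target arcs in $\overline{\operatorname{int}\mathcal C}$ are chosen in general position, so that the complements above are simply connected and contain the target arcs.) The homotopy stays off $\beta\setminus\{a_2\}$ and has endpoint values constantly $a_2\neq a_1$ and $h(a_2)\neq a_2$, so it has no fixed point on $\beta$ and $i(h,\beta)=i(g_\beta,\beta)$. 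The three pieces glue to a continuous fixed‑point‑free map $g\colon\mathcal C\to\operatorname{int}\mathcal C\cup\mathcal C$ (the values at $a_0,a_1,a_2$ match), and $i(h,\mathcal C)=i(g,\alpha)+i(g,\beta)+i(g,\gamma)=i(g,\mathcal C)=1$ by Lemma~\ref{ilem3}.

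The step I expect to be the main obstacle is the one for $\beta$: all four hypotheses get used, and hypothesis (3) is essential precisely in order to run the slit $L_\beta$ off to infinity inside the unbounded region of $\RR^2\setminus h(\mathcal C)$ — geometrically, the fact that $\beta$, apart from its tip $a_2$, sticks out into the unbounded complementary region of $h(\mathcal C)$ is what prevents $h(\beta)$ from encircling $a_1$ and makes $i(h,\beta)$ depend only on the boundary data. The remaining difficulties are of routine planar‑topology type — accessibility of $a_2$ and $a_1$ from the relevant complements (the reason for first arranging $h(a_2)\in\operatorname{int}\mathcal C$), the existence of the proper arcs $L_\gamma$, $L_\beta$ avoiding the prescribed compact arcs, and their compatibility with the target arcs in $\overline{\operatorname{int}\mathcal C}$; these are immediate when $\mathcal C$ and its subarcs are taken piecewise linear, and are handled by standard approximation in general.
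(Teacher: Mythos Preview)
Your overall strategy---replace $h$ on each arc by a fixed-point-free map into $\overline{\operatorname{int}\mathcal C}$ without changing the index, then invoke Lemma~\ref{ilem3}---is the paper's strategy, and your treatment of $\beta$ via hypothesis (3) is close to what the paper does. The real difficulty is your $\gamma$ step, and there the argument has a genuine gap.

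You claim that, using only hypothesis~(4), one can find a slit $L_\gamma$ from $a_2$ so that both $h(\gamma)$ and some target arc in $\operatorname{int}\mathcal C\cup\{h(a_0)\}$ lie in the simply connected set $\RR^2\setminus(L_\gamma\cup\gamma)$. But observe what this would force: it would make $h(\gamma)$ and the target arc homotopic rel endpoints already in $\RR^2\setminus\gamma$. Now $\RR^2\setminus\gamma$ has $\pi_1=\ZZ$, and hypothesis~(4) by itself places no restriction on the class of $h(\gamma)$ there---$h(\gamma)$ may wind around $\gamma$. When it does, for \emph{every} slit $L_\gamma$ missing $h(\gamma)$, every arc $T\subset\operatorname{int}\mathcal C\cup\{h(a_0)\}$ from $h(a_2)$ to $h(a_0)$ must meet $L_\gamma$ (equivalently: the closed curve $h(\gamma)\star T^{-1}$ has non-zero winding number around $\gamma$, so any proper ray from $a_2$ must cross it). The ``general position'' remark does not rescue this; it is a topological obstruction, not a transversality issue. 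Your argument on $\gamma$ is, in effect, assuming that $i(h,\gamma)$ is determined by the endpoint data and $h(\gamma)\cap\gamma=\emptyset$, which is false in general.

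The paper sidesteps this by never homotoping $h|_\gamma$ at all. Using hypothesis~(3), it homotopes $h|_\beta$ in $\RR^2\setminus(\beta\setminus\{a_2\})$ first to $h(\alpha)^{-1}\star h(\gamma)^{-1}$ (this uses that $\beta\setminus\{a_2\}$ is in the unbounded region of $\RR^2\setminus h(\mathcal C)$, so the closed curve $h(\mathcal C)$ is null there), and then replaces $h(\alpha)^{-1}$ by $\gamma\star Oh(O)\subset\mathcal C$. The modified map on $\beta\star\gamma$ then traces $\gamma\star Oh(O)\star h(\gamma)^{-1}\star h(\gamma)$, and one cancels $h(\gamma)^{-1}\star h(\gamma)$ using that $h(\gamma)$ meets neither $\beta$ nor $\gamma$ (hypotheses~(3) and~(4)). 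This cancellation is precisely what absorbs the unknown winding of $h(\gamma)$: the paper controls the homotopy class of $h(\beta)\star h(\gamma)$ (via $h(\mathcal C)$ and hypothesis~(3)), not that of $h(\gamma)$ alone.
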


\begin{center}
\begin{tikzpicture}[scale=1] \useasboundingbox (0,0) rectangle (12,5); 
\draw plot[smooth] coordinates{(2.55,2) (3,1) (6.5,2) (11.5,0.5) (12,1) (10,2) (8,2.5) (9.5,3) (9,4) (6,3.7) (2.5,4) (2,3.75) (2.25,3.5) (5,3) 
(2.1,3) (0,2) (1,1) ((2,1) (1.5,1.5) (2.55,2)}; 
\draw (0.5,0.5)--(11.5,0.5)--(6,4.5)--(0.5,0.5);
\fill (6,4.5) circle (1.5pt);
\fill (.5,.5) circle (1.5pt);
\fill (11.5,.5) circle (1.5pt);
\fill (2.55,2) circle (1.5pt);
\fill (6,3.7) circle (1.5pt);
\draw (6,4.6) node[right]{$O$};
\draw (2.55,1.9) node[right]{$h(O)$};
\draw (0,2) node[left]{$h(\gamma)$};
\draw (6,0.5) node[below]{$\beta$};
\draw (10.5,2) node[right]{$h(\beta)$};
\draw (5.5,4.2) node[left]{$\alpha$};
\draw (11.4,1) node[left]{$\gamma$};
\draw (6.5,2) node[below]{$h(\alpha)$};
\draw (5.8,0.3)--(6,0.5)--(5.8,0.7);
\draw (5.6,4.4)--(5.6,4.2)--(5.8,4.2);
\draw (10.8,0.8)--(10.8,1)--(11,1);
\end{tikzpicture}
\end{center}

\begin{proof} Let $\star$ denote the path composition and $O$ be $\alpha(0)$. By hypothesis, $\beta \setminus \beta(1)$ lies in the unbounded
complementary region of the closed curves 
 $h({\cal C})$ and $h(\alpha)\star \gamma\star Oh(O)$ (where $Oh(O)$ is a subarc of $\alpha$). Inside $\RR^2 \setminus (\beta
\setminus \beta(1))$, we have $h(\beta)$ homotopic rel endpoints to $h(\alpha)^{-1}
\star h(\gamma)^{-1}$ and $h(\alpha)^{-1}$ homotopic rel endpoints to $\gamma \star Oh(O)$.
Therefore, according to Lemma~\ref{ilem2}, we can replace the field $\xi = h(x) - x$ on $\cal C$ by a field $\xi' = h'(x)-x$, (where $h' : {\cal
C}
\rightarrow \RR^2$ is without fixed point and equal to $h$ on $\gamma$),  with the same index as $\xi$ and whose endpoint describes $\gamma
\star Oh(O)\star h(\gamma)^{-1}\star h(\gamma)$ as
its origin describes $\beta \star \gamma$. The natural homotopy of $h(\gamma)^{-1}\star h(\gamma)$ (supported by $\mbox{\rm Im}(h\circ \gamma)$) to the
constant map on $h(O)$ does not meet fixed points of $h'$
since $\beta \star \gamma$ does not meet $h'(\gamma) = h(\gamma)$. Finally we get a new field on ${\cal C}$ with the same index as $\xi$ whose
endpoint describes
$h(\alpha)\star \gamma
\star Oh(O)$, that is a curve
inside int$\cal C \bigcup \cal C$ and we conclude with Lemma~\ref{ilem3}.
\end{proof}

We will need a variation on Lemma~\ref{Klem}.

We consider the following situation:
 $\cal C$ is a simple closed curve positively oriented, $K$ a closed connected set such that $K \bigcap \mbox{\rm int}{\cal C} \not = \emptyset$ and
$h(K) = K$, and $\xi$ is 
a vector field without zero along $\cal C$  . Suppose there is an arc $\alpha \subset {\cal C}$ with
$\mbox{\rm int}\alpha \bigcap K =\emptyset$ and
successive
points $p_0, p_1, \cdots , p_n \in \alpha$ (where $p_0$ is the origin of $\alpha$ and $p_n$ the endpoint of $\alpha$) such that $h(p_0), h(p_n)
\in
\mbox{\rm int}{\cal C} \bigcup {\cal C}$ and arcs $\rho_1, \rho_2, \cdots, \rho_{n-1}$ where $\rho_i$ joins $p_i$ to $K$ irreductibly which are
disjoint except perhaps for their
endpoint in $K$. Let $\Omega_i$ be the bounded region determined by $\rho_i p_ip_{i+1}\rho_{i+1} $ and $K$ ($1 \leq i \leq n-1$).

\begin{lem}
\label{Kbislem}
If the preceding data satisfy
\begin{enumerate}
 \item $\rho_i \subset \mbox{\rm int}{\cal C} \bigcup \{ p_i \}$.
\item $h(\rho_i p_ip_{i+1}\rho_{i+1}) \bigcap \rho_i p_ip_{i+1}\rho_{i+1} = \emptyset$, $1 \leq i \leq n-2$.
\item $h(\rho_i) \bigcap \mbox{\rm int}{\cal C} \not = \emptyset$.
\item Either $p_0$ and $h(p_0)$ belong to $\CaC$ and $h(p_0 p_1) \bigcup h(\rho_1)$ does not separate $p_0 p_1$ from infinity in $\RR^2
\setminus \mbox{\rm int}{\cal C}$ or $p_0$ (and $h(p_0)$) belong to $K$ and the bounded region $\Omega_0$ determined by $p_0p_1 \bigcup \rho_1$
and $K$ satisfies $\Omega_0 \bigcap h(\Omega_0) = \emptyset$. And similarly for $p_n$.
\end{enumerate}
Then there exists another vector field without zero $\xi'$ along $\cal C$ equal to $\xi$ outside $\alpha$ such that the endpoints of $\xi'$ belong
to
$\mbox{\rm int}{\cal C} \bigcup {\cal C}$ along $\alpha$ and which satisfies $i(\xi, \alpha)- i(\xi', \alpha) = k$ where $k\geq 0$ is given by
the number
of $i$ such that $\Omega_i \subset h(\Omega_i)$ ($1 \leq i \leq n-1$).
\end{lem}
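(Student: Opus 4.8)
The plan is to follow the same strategy as in Lemma~\ref{Klem}, replacing the field $\xi$ along the arc $\alpha$ piece by piece on each subarc $p_ip_{i+1}$ by a controlled field whose endpoints stay inside $\mbox{\rm int}\CaC \cup \CaC$, while keeping track of the index defect produced on each piece. As before, the first step is to introduce on each $\rho_i$ (described from $p_i$ towards $K$) the last point $q_i$ of $h^{-1}(\CaC)\cap\rho_i$, which exists by hypothesis (3), or $q_i=p_i$ if that set is empty; then choose orientation preserving homeomorphisms $\phi_i : p_ip_{i+1} \to q_ip_ip_{i+1}q_{i+1}$ and set $\tilde\xi(x) = h(\phi_i(x))-x$ for $x\in p_ip_{i+1}$, $1\le i\le n-1$. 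Hypothesis (2) guarantees $\tilde\xi$ is non-vanishing on the interior subarcs. The standard homotopy-rel-endpoints argument (composing the $\phi_i$ and collapsing overlaps, exactly as in the proof of Lemma~\ref{Klem}) shows that replacing $\xi$ by $\tilde\xi$ along $\alpha$ does not change $i(\xi,\alpha)$; this handles $p_1,\dots,p_{n-1}$. For the two boundary subarcs $p_0p_1$ and $p_{n-1}p_n$ the definition of $\phi_0$ (resp. $\phi_{n-1}$) has to be adapted since $q_0$ (resp. $q_n$) is not defined by the same recipe, but hypothesis (4) provides exactly what is needed to pin down $h(p_0)$ and $h(p_n)$ in $\mbox{\rm int}\CaC\cup\CaC$.

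Next, for each interior index $i$ with $1\le i\le n-1$, I would run the three-case analysis of Lemma~\ref{Klem}. By hypothesis (2) and the connectedness of $K$, for each $i$ exactly one of $h(\Omega_i)\cap\Omega_i=\emptyset$, $h(\Omega_i)\subset\Omega_i$, $\Omega_i\subset h(\Omega_i)$ occurs. In the first case pick an arc $\beta_i$ from $h(q_i)$ to $h(q_{i+1})$ lying in $\mbox{\rm int}\CaC$ except at its endpoints (possible since $h(q_i),h(q_{i+1})\in\CaC$ by construction of $q_i$, or in $\mbox{\rm int}\CaC\cup\CaC$ in the boundary situation), define $\xi'$ on $p_ip_{i+1}$ via a parametrisation of $\beta_i$, and Lemma~\ref{ilem2}(1) gives $i(\tilde\xi,p_ip_{i+1})=i(\xi',p_ip_{i+1})$ because $p_ip_{i+1}$ lies outside the Jordan curve $\beta_i \cup h(q_ip_ip_{i+1}q_{i+1})$. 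In the second case $q_i=p_i$, $q_{i+1}=p_{i+1}$ and one simply takes $\xi'=\tilde\xi$. In the third case one defines $\xi'$ as in the first case but now $p_ip_{i+1}$ is \emph{inside} the Jordan curve $\beta_i\cup h(q_ip_ip_{i+1}q_{i+1})$, which is positively oriented from $h(q_i)$ to $h(q_{i+1})$ since $h$ is orientation preserving, so Lemma~\ref{ilem2}(2) yields $i(\tilde\xi,p_ip_{i+1})-i(\xi',p_ip_{i+1})=1$. Summing the per-piece defects over $i$ gives $i(\xi,\alpha)-i(\xi',\alpha)=k$ with $k$ equal to the number of $i$ such that $\Omega_i\subset h(\Omega_i)$, and by construction $\xi'$ has endpoints in $\mbox{\rm int}\CaC\cup\CaC$ along $\alpha$ and equals $\xi$ outside $\alpha$.

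The main obstacle I expect is the treatment of the two boundary pieces $p_0p_1$ and $p_{n-1}p_n$, where the clean picture of Lemma~\ref{Klem} (all $\rho_i$ running from $\CaC$ to $K$, all $\Omega_i$ bounded by $\CaC\cup\rho_i\cup p_ip_{i+1}\cup\rho_{i+1}\cup K$) breaks down: here $\alpha$ is only a subarc of $\CaC$, there is no $\rho_0$ or $\rho_n$, and $p_0,h(p_0)$ may lie either on $\CaC$ or on $K$. Hypothesis (4) is phrased precisely to cover both sub-cases. In the $\CaC$ sub-case, the non-separation condition ``$h(p_0p_1)\cup h(\rho_1)$ does not separate $p_0p_1$ from infinity in $\RR^2\setminus\mbox{\rm int}\CaC$'' lets me build, outside the relevant Jordan curve, an arc $\beta_0$ joining $h(p_0)$ to $h(q_1)$ inside $\mbox{\rm int}\CaC\cup\CaC$ and apply Lemma~\ref{ilem2}(1) with no index change; in the $K$ sub-case, $\Omega_0\cap h(\Omega_0)=\emptyset$ is exactly case~1 of the trichotomy for the degenerate region $\Omega_0$, again giving no index change on $p_0p_1$. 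Either way these two end pieces contribute $0$ to $k$, so only the interior $\Omega_i$ enter the count. Once this bookkeeping at the ends is set up carefully, the rest is the verbatim argument of Lemma~\ref{Klem}.
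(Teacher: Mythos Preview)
Your approach is essentially the paper's own: set $q_0=p_0$, $q_n=p_n$, define the remaining $q_i$ and $\tilde\xi$ exactly as in Lemma~\ref{Klem}, run the three-case analysis on the interior pieces, and use hypothesis~(4) to force the two end pieces $p_0p_1$ and $p_{n-1}p_n$ into the zero-contribution case. One small correction: hypothesis~(3) does not guarantee $h^{-1}(\CaC)\cap\rho_i\neq\emptyset$; its actual role is to ensure $h(q_i)\in\mbox{\rm int}\CaC\cup\CaC$ even when $q_i=p_i$ (since then $h(\rho_i)$ misses $\CaC$ but meets $\mbox{\rm int}\CaC$, hence lies entirely there), so that the arcs $\beta_i$ can always be chosen inside $\mbox{\rm int}\CaC\cup\CaC$ by arc-connectedness --- this is precisely how the paper invokes~(3).
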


\begin{center}
\begin{tikzpicture}[scale=1] \useasboundingbox (0,0) rectangle (15,5); 
\draw [very thick] plot[smooth] coordinates{(2,2.5) (3,2.2) (4,2.5) (5,2.8) (6,2.5) (7,2.2) (8,2.5) }; 
\draw [very thick] plot[smooth] coordinates{(3.5,2.3) (4,3) (5,3.5) (6,4) (7,4.8)};
\draw (8,2.5) node[right]{$K$};
\draw plot[smooth] coordinates{(7,.5) (7,4.5) (3,4.5) (1,2.5) (3,.5) (7,.5)};
\draw plot[smooth] coordinates{(5,3.5) (3,5) (2.5,3) (.5,2.5) (2.5,1.5) (3,.1) (5,1) (6,.4)};
\draw (2,1) node[below]{$\cal C$};
\draw (1.9,1.3)--(2,1.1)--(1.8,1.1);
\fill (5,3.5) circle (1.5pt);
\draw (5.2,3.6) node[above]{$h(p_0)$};
\fill (6.8,4.61) circle (1.5pt);
\draw (6.8,4.5) node[below]{$p_0$};
\draw [out=-110,in=100] (6,4.8) to (6,4);
\fill (6,4.8) circle (1.5pt);
\draw (6,4.8) node[above]{$p_1$};
\fill (6,.4) circle (1.5pt);
\draw (6,.4) node[below]{$h(p_n)$};
\draw [out=-90,in=110] (3,5) to (3.8,2.75);
\draw (3.35,3.8) node[right]{$h(\rho_1)$};
\fill (3,5) circle (1.5pt);
\draw (3,5) node[above]{$h(p_1)$};
\draw [out=90,in=-110] (5.5,.8) to (5.6,2.6);
\draw (5.5,1.8) node[right]{$h(\rho_{n-1})$};
\draw [out=110,in=-90] (4.4,.4) to (4.4,2.65);
\draw (4.4,1.6) node[left]{$\rho_{n-1}$};
\fill (5.5,.8) circle (1.5pt);
\fill (4.4,.4) circle (1.5pt);
\fill (5.2,.4) circle (1.5pt);
\draw (5.2,.4) node[above]{$p_n$};
\draw [very thick] (8.5,4.5)--(14.5,4.5);
\draw (14.5,4.5) node[right]{$K$};
\draw (8.5,2)--(14.5,2);
\draw plot[smooth] coordinates{(14,2) (11,1) (9,2) (9.5,4.5)};
\fill (14,2) circle (1.5pt);
\draw (14,2) node[above]{$h(p_n)$};
\draw [out=110,in=-80] (10,2) to (10,4.5);
\fill (10,2) circle (1.5pt);
\draw (10,2) node[below]{$p_{n-1}$};
\draw (10,3.5) node[right]{$\rho_{n-1}$};
\fill (12,2) circle (1.5pt);
\draw (12,2) node[below]{$p_n$};
\fill (10,1.25) circle (1.5pt);
\draw (10,1.2) node[below]{$h(p_{n-1})$};
\draw (9,.1) node[right]{this is forbidden by hypothesis 4.};

\end{tikzpicture}
\end{center}

\begin{proof}
The proof is very similar to the proof of Lemma~\ref{Klem}. We define again, on $\rho_i$ described from $p_i$ towards $K$, $q_i$ as $p_i$ if
$h^{-1}({\cal C}) \bigcap \rho_i = \emptyset$ or the last point of $h^{-1}({\cal C}) \bigcap \rho_i$ if this set is not empty. Let $q_0 =
p_0$ and $q_n = p_n$ and proceed now exactly as in the proof of Lemma~\ref{Klem}, the arcs $\beta_i$ 
such that $\beta_i \subset \mbox{int}{\cal C} \bigcup {\cal C}$  existing trivially using 3), $\mbox{int}{\cal C} \bigcup
{\cal C}$ being arc connected. Given hypothesis 4., the contributions of $\xi$ and $\xi '$ to their index are equal on $p_0p_1$ and $p_{n-1}p_n$,
whence the last formula.

\end{proof}

We will also need the following slight extension of Lemma~\ref{dlem}.

We consider a non empty non separating continuum in the plane and a finite number of pairwise disjoints arcs  $\gamma _i :
[-1, 0] \rightarrow \RR^2$ , $1 \leq i \leq k$, such that $\gamma_i(0) \in K$ and $\gamma_i([-1,
0))$  lies in $\RR^2 \setminus K$. 

\begin{lem}
 \label{dbislem}
There exists a sequence $B_n$, $n \geq 0$, of topological closed discs such that
\begin{enumerate}
 \item $K = \bigcap B_n$
\item $B_{n+1} \subset B_n$
\item For every $b \in \mbox{\rm Fr}B_n \setminus \bigcup \mbox{\rm Im}\gamma_i$ there exists an arc $\rho = \rho(b)$ from $b$ to some point
$x(b) \in \mbox{\rm Fr}K$ such that $\rho(b) \setminus \{ b, x(b) \} \subset \mbox{\rm Int}B_n \setminus (K \bigcup (\bigcup_i \mbox{\rm
Im}\gamma_i$)), and $\rho(b) \bigcap \rho(b')$ is at most a point in $K$ if $b \not = b'$.
\item diam$\rho(b) \rightarrow 0$ uniformly in $b \in \mbox{\rm Fr}B_n$ as $n \rightarrow +\infty$.
\end{enumerate}

\begin{proof}
 Schoenflies theorem gives us a homeomorphism $\phi$ of $\RR^2$ such that each $\phi ^{-1}(\gamma_i)$ is a vertical segment 
 with abscissa an integer. We now apply the proof of Lemma~\ref{dlem} to the continuum $\phi^{-1}(K)$, considering the tiling $C_n$ of the
plane by the closed squares of center $(\dfrac{k}{2^n}, \dfrac{l}{2^n})$ and side length $\dfrac{1}{2^n}$, $k, l \in \ZZ$. Since the $\gamma_i$
are contained in the $1$-skeleton of $C_n$, we get the desired result for the continuum $\phi^{-1}(K)$ and the $\phi^{-1}(\gamma_i)$. We conclude
using the uniform continuity of $\phi$ on any big ball containing the whole sequence of discs associated to $\phi^{-1}(K)$.
\end{proof}
\end{lem}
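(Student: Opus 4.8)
The plan is to straighten the arcs $\gamma_i$ onto a dyadic grid by a preliminary Schoenflies deformation, and then run the construction of Lemma~\ref{dlem} verbatim on the deformed continuum. First I would produce a homeomorphism $\phi$ of $\RR^2$ such that each $\phi^{-1}(\mbox{\rm Im}\,\gamma_i)$ is a vertical segment of integer abscissa meeting $\phi^{-1}(K)$ only in the single point $\phi^{-1}(\gamma_i(0))$. Since the $\gamma_i$ are finitely many, pairwise disjoint, and each touches $K$ only at one of its endpoints, such a $\phi$ exists: straighten each arc inside a thin topological disc neighbourhood chosen disjoint from the other arcs (Schoenflies), and then apply an ambient homeomorphism, fixing $K$ near the contact points, that slides the resulting straight segments onto distinct vertical lines of integer abscissa.

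Next I would apply the argument in the proof of Lemma~\ref{dlem} to the non-separating continuum $K'=\phi^{-1}(K)$, using the tiling $C_n$ by closed squares of side $2^{-n}$. The key point is that every vertical line of integer abscissa lies in the $1$-skeleton of $C_n$ for all $n$, so each segment $\phi^{-1}(\mbox{\rm Im}\,\gamma_i)$ lies in that $1$-skeleton and never meets the interior of a tile. Hence, in the construction of $B_n'$ and of the rectilinear segments $\rho'(b)$ (where $\rho'(b)$ joins a point $b\in\mbox{\rm Fr}\,B_n'$ to a nearest point $x(b)$ of $K'$ inside a single tile $Q$), a segment $\rho'(b)$ with $b$ off every $\phi^{-1}(\mbox{\rm Im}\,\gamma_i)$ can meet some $\phi^{-1}(\mbox{\rm Im}\,\gamma_j)$ only at a point of $K'$: the part of $\phi^{-1}(\mbox{\rm Im}\,\gamma_j)$ lying off $K'$ is disjoint from $K'$, and by the same nearest-point inequalities used for Lemma~\ref{Qlem} a point of $\rho'(b)$ outside $K'$ cannot lie on an arc disjoint from $K'$ without contradicting minimality of $|b-x(b)|$. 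This non-crossing verification, together with the clean simultaneous straightening of all the $\gamma_i$, is the step I expect to require the most care.

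Finally I would transport everything back through $\phi$: set $B_n=\phi(B_n')$ and, for $b\in\mbox{\rm Fr}\,B_n\setminus\bigcup_i\mbox{\rm Im}\,\gamma_i$, set $\rho(b)=\phi\bigl(\rho'(\phi^{-1}(b))\bigr)$ and $x(b)=\phi\bigl(x(\phi^{-1}(b))\bigr)$. Properties (1), (2) and (3) are exactly the $\phi$-images of the corresponding facts for $K'$, once one notes that $\phi$ carries the straightened grid segments onto $\bigcup_i\mbox{\rm Im}\,\gamma_i$. For (4), the whole nested family $B_0'\supset B_1'\supset\cdots$ sits inside the fixed compact set $B_0'$, on which $\phi$ is uniformly continuous; since $\mbox{\rm diam}\,\rho'(b)\to 0$ uniformly in $b$ (it is bounded by $\sqrt{2}\cdot 2^{-n}$ as in Lemma~\ref{dlem}), uniform continuity of $\phi$ on $B_0'$ forces $\mbox{\rm diam}\,\rho(b)\to 0$ uniformly as well.
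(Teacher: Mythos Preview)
Your proposal is essentially the paper's own argument: straighten the $\gamma_i$ via Schoenflies onto vertical segments lying in the $1$-skeleton of the dyadic tiling, rerun the construction of Lemma~\ref{dlem} on $\phi^{-1}(K)$, and transport back using uniform continuity of $\phi$ on a compact set containing all the $B_n'$. One small correction: the reason the open segment $\rho'(b)\setminus\{b,x(b)\}$ avoids a straightened arc $\phi^{-1}(\gamma_j)$ is purely geometric---the segment lies in a single closed tile $Q$ while $\phi^{-1}(\gamma_j)$ lies on $\partial Q$, and a segment in a convex set touches the boundary only at its endpoints unless it lies entirely on that side---not the nearest-point inequality of Lemma~\ref{Qlem}, which compares two segments both ending on $K'$ and does not apply here.
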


\begin{lem}
 A closed compactly connected subset of $\RR^2$ can be written as an increasing union of subcontinua.

\begin{proof}
 If $F \subset \RR^2$ is closed and compactly connected, choose $x_0 \in F$ and let $K_n$ be the connected component of $x_0$ in $F \bigcap B(O,
n)$. We are left to show that $F \subset \bigcup_{n>0}K_n$ : but if $x\in F$, there exist a continuum $C$ such that $x_0, x \in C$ and therefore
$x \in K_n$ as soon as $C \subset B(O, n)$.
\end{proof}

\end{lem}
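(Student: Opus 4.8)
The plan is to exhaust $F$ by its intersections with large closed balls and take the connected component of a fixed basepoint in each. Concretely, fix $x_0 \in F$ and, for each integer $n \geq 1$, let $K_n$ denote the connected component of $x_0$ in the set $F \cap \overline{B(O,n)}$, where $\overline{B(O,n)}$ is the closed ball of radius $n$ centered at the origin.

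First I would verify that each $K_n$ is a subcontinuum of $F$. The set $F \cap \overline{B(O,n)}$ is closed (an intersection of closed sets) and bounded, hence compact; since connected components are always closed, $K_n$ is a closed subset of a compact set, hence compact. It is non-empty because it contains $x_0$, and connected by construction, so it is a continuum contained in $F$.

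Next, monotonicity: because $F \cap \overline{B(O,n)} \subset F \cap \overline{B(O,n+1)}$ and $K_n$ is a connected subset of the larger set containing $x_0$, it must be contained in the component $K_{n+1}$ of $x_0$ in that larger set; hence $K_n \subset K_{n+1}$. Finally, to see that $F = \bigcup_{n\geq 1} K_n$, the inclusion $\supseteq$ is immediate. For $\subseteq$, take any $x \in F$; by compact connectedness of $F$ there is a continuum $C \subset F$ with $x_0, x \in C$. As $C$ is compact it is contained in some $\overline{B(O,n)}$, so $C \subset F \cap \overline{B(O,n)}$; being connected and containing $x_0$, $C$ lies in the component $K_n$, whence $x \in K_n$.

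There is essentially no obstacle here: the only subtlety worth flagging is that the connected component of a point in a compact (or merely any) topological space is closed, and therefore compact when the ambient set is compact — this is precisely what upgrades each $K_n$ from a connected set to a genuine continuum.
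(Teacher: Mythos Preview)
Your proof is correct and follows essentially the same approach as the paper: fix a basepoint, take the component of that point in $F$ intersected with a growing family of balls, and use compact connectedness to show any point of $F$ eventually lands in one of these components. You have simply spelled out the easy verifications (compactness of $K_n$, monotonicity) that the paper leaves implicit.
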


\begin{lem}
\label{boundary lem}
 Let $F$ be a closed, compactly connected, non compact, non separating subset of $\RR^2$, with int$F=\emptyset$, then any neighborhood of $F$
contains a neighborhood of $F$ homeomorphic to $\RR^2$ bounded by a proper line. Consequently, $\RR^2 \setminus F$ is homeomorphic to $\RR^2$.
\end{lem}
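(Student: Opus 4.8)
The plan is to exhaust $F$ by an increasing sequence of continua, apply the disc-sequence machinery of Lemma~\ref{dlem} to each, and splice the resulting discs together along carefully chosen rectilinear "bridges" so that their union is an increasing sequence of discs whose boundaries run off to infinity, the limit being a neighborhood of $F$ homeomorphic to a closed half-plane. Concretely: using the preceding lemma, write $F=\bigcup_{n\geq 0}K_n$ with each $K_n$ a continuum, $K_n\subset K_{n+1}$, and (after reindexing) $K_n$ not compact-escaping in the sense that we may assume $K_n\cap\mathrm{Fr}(B(O,n))$-type control, so that $K_n$ eventually meets the complement of any fixed ball. Each $K_n$ is a non-separating continuum (a subcontinuum of the non-separating $F$, and one checks it is non-separating using that $F$ has empty interior, so $\RR^2\setminus K_n\supset \RR^2\setminus F$ is connected together with the fact that bounded complementary components of $K_n$ would have to lie in $F$ hence be empty). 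Apply Lemma~\ref{dlem} to get closed discs $B_n^{(j)}$ shrinking to $K_n$.

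The construction then proceeds inductively on $n$. Having built a closed disc $D_n$ which is a neighborhood of $K_n$ contained in the prescribed neighborhood $V$ of $F$, with $\mathrm{Fr}(D_n)$ a proper line (i.e. $D_n$ is a closed half-plane up to homeomorphism — here one uses that $K_n$, and hence a thin enough disc around it, can be taken to reach outside any ball so that after removing a boundary arc "at infinity" it becomes an honest half-plane), one chooses $j$ large so that the Lemma~\ref{dlem} disc $B^{(j)}_{n+1}$ around $K_{n+1}$ is so thin that $B^{(j)}_{n+1}\cup D_n\subset V$; then one enlarges $D_n$ to $D_{n+1}\supset D_n\cup B^{(j)}_{n+1}$ by adding a thin rectilinear collar, staying inside $V$, and arranges that $D_{n+1}$ is again a closed disc whose frontier is a proper line and whose intersection with a large ball contains $D_n$. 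The uniform smallness of the rectilinear segments $\rho(b)$ in Lemma~\ref{dlem}(3), together with $\mathrm{int}F=\emptyset$, is what guarantees the enlargements can be kept inside $V$ and that $\bigcup_n D_n$ is exactly a neighborhood of $F$ (every point of $F$ lies in some $K_n\subset\mathrm{int}D_n$, and conversely $\bigcap$-control keeps us inside $V$). Setting $W=\bigcup_n D_n$, this is an open-in-the-relevant-sense increasing union of half-planes, whose frontier is a single proper line; hence $W\cong\RR^2$ bounded by a proper line. Finally $\RR^2\setminus F\supset\RR^2\setminus W$, and since $W$ is a half-plane neighborhood of $F$ with $\mathrm{int}F=\emptyset$ one gets $\RR^2\setminus F$ connected and simply connected, and an exhaustion argument (or direct appeal to the classification of open subsets of $\RR^2$ that are connected and simply connected with one end) yields $\RR^2\setminus F\cong\RR^2$.

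I expect the main obstacle to be the splicing step: ensuring that when one glues the disc $B^{(j)}_{n+1}$ around $K_{n+1}$ onto the previously constructed half-plane $D_n$ via rectilinear bridges, the result is still a \emph{disc} (no new bounded complementary regions are created, no pinching occurs) and its boundary is still a \emph{proper} line rather than, say, a line together with stray loops — and that all of this happens inside the given neighborhood $V$. This is where Lemma~\ref{dlem}(3) (disjointness and uniform smallness of the segments $\rho(b)$) and Lemma~\ref{dbislem} (the version of Lemma~\ref{dlem} respecting a finite family of arcs — here the bridges already placed) do the real work; the non-compactness of $F$ is used precisely to push the "missing arc" of each $D_n$ off to infinity so that $D_n$ is a genuine half-plane and the union has exactly one end. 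The passage from "$W\cong$ half-plane" to "$\RR^2\setminus F\cong\RR^2$" is then routine, using that $F=\mathrm{Fr}F$ has empty interior so $\RR^2\setminus F$ and $\RR^2\setminus W$ differ only by the frontier line of $W$, which can be absorbed.
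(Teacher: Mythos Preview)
Your opening moves match the paper's: exhaust $F$ by continua $K_n$ via the preceding lemma, and surround each by a disc from Lemma~\ref{dlem} inside the given neighborhood. From there, however, the paper avoids the inductive splicing entirely. It chooses each $B_n$ from a square tiling of side length decreasing with $n$ (so that the family $(\mathrm{Fr}\,B_n)_{n>0}$ is locally finite), takes $\bigcup_n B_n$ together with all components of its complement that happen to lie in the given neighborhood, and observes directly that the frontier of this set is a non-compact, connected (here is where non-separation of $F$ enters) one-manifold properly embedded in $\RR^2$, i.e.\ a proper line. The half-plane exhaustion appears only in the last sentence, as a \emph{consequence}, to conclude $\RR^2\setminus F\cong\RR^2$.

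Your plan tries instead to make each intermediate $D_n$ already a half-plane, and here there is a genuine confusion beyond the splicing worry you already flag. You describe $D_n$ as ``a closed disc'' with ``$\mathrm{Fr}(D_n)$ a proper line'', obtained from a Lemma~\ref{dlem} disc by ``removing a boundary arc at infinity''. But the $K_n$ are compact, so the Lemma~\ref{dlem} discs around them are compact with circle boundaries; deleting an open boundary arc gives something abstractly homeomorphic to a closed half-plane, yet its frontier \emph{in $\RR^2$} is still the full boundary circle, not a proper line. So the inductive hypothesis you want is never actually available at a finite stage, and there is nothing with a proper-line boundary to splice onto. The natural repair --- build an increasing union of \emph{compact} discs and analyse the frontier of the union --- is exactly the paper's one-step shortcut, with local finiteness of the polygonal boundaries replacing all the bridge-and-collar bookkeeping you anticipate.
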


\begin{proof}
 Given any neighbohood $W$ of $F$, write $F$ as a union of compact, connected, non separating sets : $F = \bigcup _{n>0} K_n$, $K_n
\subset \mbox{\rm int}K_{n+1}$ and use Lemma~\ref{dlem} to find a ball $B_n$ such that $K_n \subset \mbox{\rm int}B_n \subset W$.
We choose $B_n$ as a subset a tiling of the plane by squares of side length decreasing with $n$. Then, the family (Fr$B_n)_{n>0}$ is locally finite
and if $V$ is the union of
$\bigcup _{n>0}B_n$ with all the components of $\RR^2 \setminus \bigcup _{n>0}B_n$ which lie inside $W$, then $\mbox{\rm Fr}V$ is a non
compact connected (since $F$ is non separating) one-manifold properly embedded in $\RR^2$, that is a proper line.
This implies that $\RR^2 \setminus F$ is
homeomorphic to an increasing sequence of half-planes and therefore homeomorphic to $\RR^2$.
\end{proof}

To prove Theorem~\ref{B}, according to Theorem~\ref{A}, we can suppose $F$ non compact and in
all the rest of this section, we consider $F$  a closed, compactly connected subset of $\RR^2$ without interior such that $\RR^2 \setminus F$
is homeomorphic to $\RR^2$ and a homeomorphism $h$ of $\RR^2$ preserving $F$: $h(F) = F$. 

It follows from Lemma~\ref{boundary lem} that $\infty$ is an accessible
point of $F \bigcup \{ \infty \}$ from ${\bf S}^2 \setminus F \bigcup \{ \infty \}$, and that if we let $\gamma_{\infty}$ be an access arc to $\infty$, 
then $h([\gamma_{\infty}])=[\gamma_{\infty}]$. Therefore the set of prime ends of ${\bf S}^2 \setminus F \bigcup \{ \infty \}$ minus $[\gamma_{\infty}]$,
 which we call the prime ends of $\RR^2 \setminus  F$, is linearly orderable, in fact homeomorphic to a line, and invariant under the homeomorphim induced by $h$.
Given a cyclic order on the prime ends of ${\bf S}^2 \setminus F \bigcup \{ \infty \}$, the prime ends of $\RR^2 \setminus  F$ are given by equivalence classes
of sequences $([p_n, p'_n])_{n\geq 0}$ with $p_n <p'_n <[\gamma_{\infty}]$ or  $[\gamma_{\infty}] <p_n <p'_n$ where $p_n$ and $p'_n$ can be represented by access arcs inside 
$\RR^2 \setminus F$ except for their endpoint in $F$.

To prove Theorem~\ref{B}, we will suppose, aiming to a contradiction, that $h$ has no fixed point in $F$ and therefore no fixed point on a
neighborhood $V$ of $F$.

\begin{lem}
\label{orderlem}
 On the line of prime ends $h$ has no fixed point and therefore, for any interval $[a, b]$, neither $[h(a), h(b)]$ or 
$[h^{-1}(a), h^{-1}(b)]$ is contained in $[a, b]$.
\end{lem}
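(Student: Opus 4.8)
The plan is to reduce the statement to the single claim that the homeomorphism $\hat h$ of the line of prime ends of $\RR^2\setminus F$ induced by $h$ has no fixed point, and then to derive a contradiction from such a fixed point by applying Theorem~\ref{A} to an $h$-invariant impression.

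First I would record that $\hat h$ is order preserving: as $h$ is orientation preserving, the homeomorphism it induces on the circle of prime ends of $U:={\bf S}^2\setminus(F\cup\{\infty\})$ preserves the cyclic order, and deleting the fixed prime end $[\gamma_\infty]$ leaves an order-preserving homeomorphism of the line. Granting that $\hat h$ has no fixed point, the ``therefore'' clause is then pure order theory: a fixed-point-free increasing self-homeomorphism of $\RR$ has $\hat h(x)-x$ of constant sign, say $\hat h(x)>x$ for all $x$ (hence also $\hat h^{-1}(x)<x$ for all $x$); then for any interval $[a,b]$ one has $\hat h(b)>b$, so $[\hat h(a),\hat h(b)]\not\subseteq[a,b]$, and $\hat h^{-1}(a)<a$, so $[\hat h^{-1}(a),\hat h^{-1}(b)]\not\subseteq[a,b]$; the case $\hat h(x)<x$ is symmetric. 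So it remains only to show that $\hat h$ fixes no prime end.

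Suppose, for a contradiction, that $\hat h(p)=p$ for some prime end $p$ of $\RR^2\setminus F$. I would pass to the impression $I(p)$, a subcontinuum of $\mbox{Fr}\,U=F\cup\{\infty\}\subset{\bf S}^2$; since the impression is natural with respect to $h$, the equality $\hat h(p)=p$ gives $h(I(p))=I(p)$. The crucial step is to see that $\infty\notin I(p)$, so that $I(p)$ is a continuum in $\RR^2$. Write $p=\bigcap_n[p_n,p'_n]$ with $p_n,p'_n$ classes of access arcs, pick $n$ so large that $[\gamma_\infty]\notin[p_n,p'_n]$, represent $p_n$ and $p'_n$ by disjoint access arcs, join them within $U$ to obtain a cut $c$, and let $R$ be the bounded region it determines. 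Then $[\gamma_\infty]$, whose impression contains $\infty$, lies on the unbounded side of $c$, so $\infty\notin\overline R$; and since $I(p)$ consists of limits of endpoints of access arcs whose classes eventually fall in $(p_n,p'_n)$, hence eventually in $R$, we conclude $I(p)\subset\overline R\cap(F\cup\{\infty\})\subset F$.

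It remains to note that every subcontinuum $C$ of $F$ is non-separating in $\RR^2$: a bounded component of $\RR^2\setminus C$ would be an open set disjoint from the connected, unbounded set $\RR^2\setminus F$, hence contained in $F$, contradicting $\mbox{int}\,F=\emptyset$. Therefore $I(p)$ is a non-separating continuum in $\RR^2$ invariant under $h$, and Theorem~\ref{A} produces a fixed point of $h$ in $I(p)\subset F$, contradicting the standing hypothesis that $h$ has no fixed point in $F$. I expect the one delicate point to be the claim $\infty\notin I(p)$, which rests on correctly matching the cyclic order of access-arc classes with the two complementary regions of the cut $c$ and with the definition of the impression; everything else is either elementary order theory or a direct appeal to Theorem~\ref{A}. (As a warm-up one can treat the subcase where $p$ is itself a class $[\gamma]$ of access arcs: then the endpoint $x$ of $\gamma$ lies in $\mbox{Fr}\,F=F$, and $\hat h(p)=p$ forces $h(x)=x$ at once.)
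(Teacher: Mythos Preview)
Your overall strategy coincides with the paper's: assume a fixed prime end $p$, show its impression $I(p)$ is an $h$-invariant non-separating continuum inside $\RR^2$, and invoke Theorem~\ref{A}. The order-theoretic ``therefore'' clause and the non-separating argument for subcontinua of $F$ are fine.

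The genuine gap is at the sentence ``let $R$ be the bounded region it determines''. In the setting of this lemma the ambient domain is $U=\RR^2\setminus F$ and $\infty\notin U$, so the paper's convention that a cut has a ``bounded region'' (the one missing the basepoint $\infty\in U$) does not apply; a crosscut $c$ of $U$ with endpoints in $F$ separates $U$ into two pieces, but a priori \emph{both} may be unbounded in $\RR^2$. Your subsequent reasoning (``$[\gamma_\infty]$ lies on the unbounded side, so $\infty\notin\overline R$'') presupposes exactly what must be shown, and your diagnosis that the delicate point is ``matching the cyclic order'' misses the real issue: one needs the hypothesis that $F$ is \emph{compactly connected}. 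This is precisely how the paper closes the argument: the two endpoints of $c$ lie in some subcontinuum $D\subset F$, so $c\cup D$ is compact, and the region of $U\setminus c$ containing the endpoints of access arcs $\beta$ with $[\beta]\in[p_n,p_n']$ is trapped by $c\cup D$ and hence bounded; consequently $I(p)\subset\RR^2$. Without this step one cannot exclude $\infty\in I(p)$, and indeed the counterexample at the end of Section~5 (a translation-invariant $F$ that is not compactly connected) shows the hypothesis is essential to the theorem. Once you insert the continuum $D$ and use it to bound $R$, your proof is complete and essentially identical to the paper's.
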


\begin{proof} Suppose $q$, defined by $[[\gamma _n], [\gamma _n']]$, is a fixed prime end and let $c$ be a cut obtained by joining irreductibely
$\gamma_0$ to $\gamma'_0$ by an arc inside $\RR^2 \setminus F$. The
endpoints of 
$c$ are both in some compact connected subset $D$ of $F$ since $F$ is compactly connected and therefore,  the region cut out by $c$ in $\RR^2
\setminus
F$ and containing the endpoints of access arcs $\beta$ such that $[\beta] \in  [[\gamma _0], [\gamma _0']]$ is bounded. This implies that
the impression associated to $q = h(q)$ is a non separating compact connected set
invariant under $h$.
Theorem~\ref{A} would then give a fixed point of $h$ in this impression and
 therefore in $F$: a contradiction.
\end{proof}      
\medskip

Let $\gamma$ an access arc to some point $p_0 \in F$ short enough so that $h^{-1}(\gamma)$, $\gamma$, $h(\gamma)$, $h^2(\gamma)$ are all disjoint
(except perhaps in their endpoints)
and let $L$ be a proper line in $V \setminus F$, boundary of a neighborhood of $F$, close enough to $F$ so that $L$ meets $h^{-1}(\gamma)$, $\gamma$,
$h(\gamma)$, $h^2(\gamma)$.
There exists a subarc $pp_0$ of $\gamma$ joining irreductibely $L$ to $F$  
and the arc $pp_0$ separates the region $R$ between $L$ and $F$ into two sub-regions $A$ and $B$ which are
unbounded. Lemma~\ref{orderlem} says that $h(\gamma) \bigcap R$ and $h^{-1}(\gamma)\bigcap R$ are not in the same region and we call $A$ the one
containing
$h(\gamma) \bigcap R$ and $B$
the one containing $h^{-1}(\gamma) \bigcap R$. By definition $\gamma$ is on the frontiers of $A$ and $B$.  Notice that $A$ contains $h^k(\gamma)
\bigcap R,
k \geq 0$ and $B$ contains $h^{-k}(\gamma) \bigcap R, k \geq 1$. Also, all regions cut out in $R$ by $h^k(\gamma)$ and $h^{k+1}(\gamma)$, as $k$
varies in $\ZZ$, are disjoint (assuming that $h^k(\gamma)$ and $h^{k+1}(\gamma)$ meet $L$).

\begin{center}
\begin{tikzpicture}[scale=1] \useasboundingbox (0,0) rectangle (15,5); 
\draw [very thick] plot[smooth] coordinates{(2,2.5) (3,2.2) (4,2.5) (5,2.8) (6,2.5) (7,2.2) (8,2.5) (9,2.6) (10,2.5) (11,2.4) (12,2.5)}; 
\draw [very thick] plot[smooth] coordinates{(7,2.2) (8,3) (9,3.5) (10,4) (11,4.8)};
\draw [very thick] plot[smooth] coordinates{(8,2.5) (9,2) (10,1.5) (11.5,.5)};
\draw (12,2.5) node[right]{$F$};
\draw plot[smooth] coordinates{(12,3) (10,3) (12,4.5) (11,5) (10,5) (9,4) (7,3) (5,3.3) (2,3) (1,2.5) (2,1.5) (8.5,1.5) (9.5,.5) (12,.1) (11,1.5) (10,2) (12,2)};
\draw (5,2.5) node[below]{$A$};
\draw (9,3) node[right]{$B$};
\draw (1,2.5) node[left]{$L$};
\fill (3,2.2) circle (1.5pt);
\fill (6.5,2.3) circle (1.5pt);
\draw (3,2.2) node[below]{$h(p_0)$};
\draw (6.5,2.3) node[below]{$p_0$};
\draw [out=110,in=-80] (6.5,2.3) to (6.4,4);
\draw plot[smooth] coordinates{(3,2.2) (2.8,3.4) (3,2.8) (3.1,4)};
\draw (6.4,3.5) node[right]{$\gamma$};
\draw (3.1,3.6) node[right]{$h(\gamma)$};
\end{tikzpicture}
\end{center}

\begin{lem}
\label{compactlem}
 $\mbox{\rm Fr}A \bigcap \mbox{\rm Fr}B \bigcap F$ is unbounded.
\end{lem}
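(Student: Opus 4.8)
The plan is to forget the dynamics entirely — the statement is a purely topological fact about how the arc $pp_0$ cuts the region $R$, once one knows (as stated just above the Lemma) that both pieces $A$, $B$ are unbounded — and to work in ${\bf S}^2=\RR^2\cup\{\infty\}$. Let $N$ be the closed region bounded by the proper line $L$ that contains $F$, so that $R=\mbox{int}N\setminus F$. Since $F$ has empty interior, the closure of $R$ in $\RR^2$ is $N$, hence its closure in ${\bf S}^2$ is $\Delta:=N\cup\{\infty\}$; and since $L$ is unknotted (Schoenflies), $\Delta$ is a closed $2$-disc with boundary circle $L\cup\{\infty\}$. The two regions $A$ and $B$ are the components of the open set $R\setminus pp_0$, so $A\cup B$ is dense in $R$ and therefore (closures now taken in ${\bf S}^2$) $\overline A\cup\overline B=\overline R=\Delta$, while each of $\overline A$, $\overline B$ is a continuum. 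The one substantial input is then the unicoherence of the disc: $Y:=\overline A\cap\overline B$ is a continuum.

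Next I would pin down $Y$. Since $A$ and $B$ are disjoint open sets, $Y$ misses $A\cup B$, so $Y=\mbox{Fr}A\cap\mbox{Fr}B$ and $Y\cap F$ is precisely the set $\mbox{Fr}A\cap\mbox{Fr}B\cap F$ we must prove unbounded. A point of $\mbox{Fr}A=\overline A\setminus A$ lies neither in $A$ nor in $B$ (open and disjoint from $\overline A$), hence it lies on $pp_0$, on $F$, on $L$, or is $\infty$; so $Y\subseteq pp_0\cup F\cup L\cup\{\infty\}$. I would then check $Y\cap L=\{p\}$: the point $p$ is the endpoint of $pp_0$ on $L$, so $p\in\overline A\cap\overline B$; conversely, for $x\in L$ with $x\neq p$ one has $x\notin F$ and $x\notin pp_0$ (as $pp_0\cap L=\{p\}$ and $pp_0$ is compact), so a small ball about $x$ misses both $F$ and $pp_0$, and — using local flatness of $L$ — its intersection with $\mbox{int}N$ is connected; that intersection is then a connected subset of $R\setminus pp_0$, which meets only one of the components $A$, $B$, so $x$ cannot lie in $\overline A\cap\overline B$. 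This leaves $Y\subseteq pp_0\cup F\cup\{\infty\}$.

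To conclude: $\infty\in\overline A\cap\overline B=Y$ because $A$ and $B$ are unbounded, and $Y$ also contains the common boundary arc $pp_0$ (hence the point $p_0\in F$), so $Y$ is a continuum with more than one point; in particular $\infty$ is not an isolated point of $Y$. If $Y\cap F$ were bounded, then by the inclusion above $Y\setminus\{\infty\}\subseteq pp_0\cup(Y\cap F)$ would be a bounded subset of $\RR^2$, so it would have compact closure in $\RR^2$, i.e. $\infty$ would be isolated in $Y$ — a contradiction. Therefore $Y\cap F=\mbox{Fr}A\cap\mbox{Fr}B\cap F$ is unbounded.

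The main obstacle is really the single topological fact that a $2$-disc cannot be the union of two subcontinua whose intersection is disconnected (the unicoherence of the disc); the rest is bookkeeping about closures in ${\bf S}^2$, the only finicky step being the identification $Y\cap L=\{p\}$, which needs local flatness of $L$ together with $F$ being closed and disjoint from $L$.
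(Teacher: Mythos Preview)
Your argument is correct and takes a genuinely different route from the paper. The paper works in the plane $R\cup F\cong\RR^2$: it writes $R\cup F\setminus(pp_0\setminus\{p\})$ as a disjoint union $(A\cup F_A)\sqcup(B\cup F_B)\sqcup(\mbox{\rm Fr}A\cap\mbox{\rm Fr}B\cap F)$ (where $F_A$, $F_B$ are the points of $F$ having a neighbourhood meeting only $A$, respectively only $B$), then assumes the third piece bounded, straightens $L$ and $\gamma$ via Schoenflies, and draws the boundary of a large rectangle that crosses $\gamma$ once and encloses the bounded set; that boundary connects a point of $A$ to a point of $B$ while staying in $(A\cup F_A)\cup(B\cup F_B)$, contradicting the disjointness. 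You instead compactify to the disc $\Delta\subset{\bf S}^2$ and let unicoherence do the work: $Y=\overline A\cap\overline B$ is a non-degenerate continuum containing $\infty$, and the inclusion $Y\subseteq pp_0\cup F\cup\{\infty\}$ forces $Y\cap F$ to be unbounded lest $\infty$ be isolated in $Y$. Your proof is cleaner and coordinate-free, trading the paper's explicit rectangle for a single appeal to unicoherence of the disc; the paper's argument is more elementary and self-contained but needs the concrete geometric construction. Both are natural expressions of the same phenomenon---a set that separates two unbounded regions must itself reach to infinity.
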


\begin{proof} Let $F_A$ (resp. $F_B$) be the set of points of $F$ which admit a neighborhood contained in $A \bigcup F$ (resp. $B \bigcup F$). The
sets 
$A \bigcup F_A$ and $B \bigcup F_B$ are disjoint (since $\mbox{\rm int}F = \emptyset$) and open, therefore their complement in $R \bigcup F
\bigcup \setminus (\gamma \setminus \{ p \})$ (which
complement is the set of points of $F $ for which every neighborhood meets $A$ and $B$, that is $\mbox{Fr}A \bigcap \mbox{Fr}B\bigcap F$)
separates $R
\bigcup F
\setminus (\gamma \setminus \{ p \})$ and $R \bigcup F \setminus (\gamma \setminus \{ p \})$ can be written as the disjoint union \hfill\break $(A
\bigcup
F_A) \coprod (B \bigcup F_B) \coprod ( \mbox{Fr}A \bigcap \mbox{Fr}B \bigcap F)$.

On the other hand, if $\mbox{Fr}A \bigcap \mbox{Fr}B \bigcap F$ was compact in $\RR^2$ or equivalently in $R \bigcup F$ (which is homeomorphic to
$\RR^2$), 
thinking of $L$ as a straight line and of $\gamma$ as a segment orthogonal to $L$ (as it is legitimate by Schoenflies theorem), one can find a
large
rectangle in $R \bigcup F$ with a side parallel to $L$ , containing $\mbox{Fr}A \bigcap \mbox{Fr}B \bigcap F$  and whose boundary cuts $\gamma$
transversaly in a
single point. The boundary of this rectangle joins a point of $A$ near $\gamma$ to a point of $B$ near $\gamma$ in contradiction to the above
decomposition
of  $R \bigcup F \setminus (\gamma \setminus \{ p \})$.
\end{proof}

Given the fact that $F$ is compactly connected, there exist a connected compact $K$  in $F$ containing $p_0$, $h(p_0)$
 and $h^{-1}(p_0)$. The preceding Lemma gives $x \in \mbox{Fr}A \bigcap \mbox{Fr}B \bigcap F$ outside
 of $K$ and we let $U$ be an euclidean disc inside $R \bigcup F$ containing $x$ such that $h(U) \bigcap U = \emptyset$. We can assume that $U$ does not meet
$K$, $\gamma$, $h(\gamma)$ and $h^{-1}(\gamma)$.

\begin{center}
\begin{tikzpicture}[scale=1] \useasboundingbox (0,0) rectangle (15,5); 
\draw [dashed] plot[smooth] coordinates{(2,2.5) (3,2.2) (4,2.5) (5,2.8) (6,2.5) (7,2.2) (8,2.5) (9,2.6) (10,2.5) (11,2.4) (12.5,2.5)}; 
\draw (12.5,2.5) node[right]{$F$};
\draw [very thick] plot[smooth] coordinates{(2.5,2.3) (3,2.2) (4,2.5) (5,2.8) (5.5,2.7)};
\draw (5,2.8) node[below]{$K$};
\fill (11,2.4) circle (1.5pt);
\draw (11,2.4) node[below]{$x$};
\draw plot[smooth] coordinates{(10.24,3) (9,3.2) (7,3) (5,3.3) (2,3) (1,2.5) (2,1.6) (8.5,1.3) (10.45,1.5)};
\draw (11,2.4) circle (1);
\draw (4,2.5)--(4,4);
\draw (4,3.8) node[right]{$\gamma$};
\draw (5,2.8)--(5,4);
\draw (5,3.8) node[right]{$h^{-1}(\gamma)$};
\draw (3.5,2.3)--(3.5,1);
\draw (3.5,1) node[right]{$h(\gamma)$};
\draw (10.8,2.4)--(10.8,4);
\draw (11.2,2.4)--(11.2,1);
\draw (10.8,4) node[right]{$\gamma_{-1}$};
\draw (11.2,1) node[right]{$\gamma_1$};
\draw (9,3.1) node[above]{${\cal A}$};
\end{tikzpicture}
\end{center}

Choose any arc joining the boundary of $U$ to itself in the region $R$ between $L$ and $F$ so that, united to an arc of $\partial U$ it gives a
simple 
closed curve $\cal A$ containing $K$ and $x$ (to get such an arc, we can consider the boundary of a neighborhood of a continuum in $F$ containing $K \bigcup \{ x\}$). 
Choose then $\epsilon >0$ so that dist$(h(u), v)>\epsilon$ for all $u, v$ inside the
curve ${\cal A}$ with dist$(u, v)< 3\epsilon$. In particular, if $\mbox{\rm diam}X <3\epsilon$, then $h(X) \bigcap X = \emptyset$. We also ask
that $\epsilon< \mbox{\rm dist}(\gamma \bigcap R, h(\gamma)\bigcap R)$. Choose also
$\delta$, $\epsilon >\delta >0$, such that $\mbox{dist}(u, v)<\delta$ implies $\mbox{dist}(h(u), h(v))<\epsilon$ and $\mbox{dist}(h^{-1}(u),
h^{-1}(v))<\epsilon$ for $u, v \in \mbox{int}{\cal A}$.

Let $p_ {-1}$ a point of $U \bigcap F$ accessible from $B$, $p_1$ a point of $U \bigcap F$ accessible from $A$ with corresponding access arcs 
$\gamma _{-1}$ and $\gamma _1$ such that $\mbox{\rm dist}(p_{-1}, p_1)<\epsilon$. We order the line of prime ends so that $[\gamma_{-1}]<[\gamma]<[\gamma_1]$. 
By Lemma~\ref{orderlem}, exchanging $h$ and $h^{-1}$ if necessary, we can (and we will) suppose that $h(p)>p>h^{-1}(p)$ for every prime end $p$.
We can assume that $\gamma_{-1}$, $h(\gamma_{-1})$,  $\gamma$, 
 $h^{-1}(\gamma_1)$, $\gamma_1$, $h(\gamma_1)$ are all disjoint and meet $L$ (choosing, if necessary, a new $L$ closer to $F$).
By construction $\gamma $ separates $h(\gamma _{-1})$ and $h^{-1}(\gamma _1)$ (inside $R$) and we
have the order $[\gamma_{-1}]<[h(\gamma_{-1})]<[\gamma]<[h^{-1}(\gamma_1)]<[\gamma_1]$.

\subsection{A simple closed curve}

We consider the rectilinear segment $p_1p_{-1}$ and $u_{-1}$ the infimum on the line of prime ends of classes of access arcs between $[\gamma]$
and $[\gamma_{-1}]$ which do not cut $p_1p_{-1}$ and $u_1$ the supremum of classes of access arcs between $[\gamma]$
and $[\gamma_1]$ which do not cut $p_1p_{-1}$. Choose a disc $B$ in the $\epsilon$-neighborhood of $F$ as given by Lemma~\ref{dbislem} applied to
a subcontinuum $M$ of $F$ containing $F \bigcap \overline{\mbox{\rm int}{\cal A}}$, $p_1$ and $p_{-1}$, and to the arcs $\gamma_1$, $\gamma_{-1}$.
We choose $B$ close enough to $M$ so that Fr$B$ meets $\gamma_1$ and $\gamma_{-1}$.
Let $c$ be a cut inside $\cal A$ made of three arcs : two end parts $\rho_-$, $\rho_+$ of access arcs such that $[\rho_-]<u_{-1}<[\rho_+]$,
$\rho_- \bigcap \gamma_{-1} = \emptyset =\rho_+ \bigcap \gamma_{-1}$ and an arc of Fr$B$. We can suppose that $c$ and $h(c)$ do not meet
$\gamma$ and that diam$c<3\epsilon$ so that $c \bigcap h(c)=\emptyset$. Similarly, we define a cut $d$ inside ${\cal A}$ made of (end parts) of
access arcs $\mu_-$ and $\mu_+$, $[\mu_-]<u_1<[\mu_+]$ and an arc of Fr$B$ such that $d \bigcap \gamma = \emptyset =h^{-1}(d) \bigcap \gamma$ and
$d\bigcap h^{-1}(d)=\emptyset$.

\begin{center}
\begin{tikzpicture}[scale=1] \useasboundingbox (0,0) rectangle (15,5); 
\draw [very thick] plot[smooth] coordinates{(2,2.5) (3,2.2) (4,2.5) (5,2.8) (6,2.5) (7,2.2) (8,2.5) (9,2.6) (10,2.5) (11,2.4) (12.5,2.5)}; 
\draw (12.5,2.5) node[right]{$F$};
\draw (2.5,2.3)--(2.5,4);
\draw (2.5,4) node[left]{$\gamma$};
\draw (11,2.4)--(11,3.5);
\draw (11,3) node[right]{$\rho_-$};
\draw (10.5,2.4)--(10.5,3.5);
\draw (10.5,3) node[left]{$\rho_+$};
\draw (10,3.5)--(11.5,3.5);
\draw (11.5,3.5) node[right]{Fr$B$};
\draw (10.75,3.5) node[below]{$c$};
\draw (9,2.6)--(9,1);
\draw (9,2) node[right]{$\mu_+$};
\draw (7,2.2)--(7,1);
\draw (7,1.8) node[left]{$\mu_-$};
\draw (6.5,1)--(9.5,1);
\draw (9.5,1) node[right]{Fr$B$};
\draw (7.5,1) node[above]{$d$};
\draw (5,2.8)--(5,3.5)--(6.5,3.5)--(6.5,2.3);
\draw (6.5,3.5) node[right]{$h(c)$};
\draw (5.5,2.7)--(5.5,1)--(3,1)--(3,2.2);
\draw (3,1) node[left]{$h^{-1}(d)$};

\end{tikzpicture}
\end{center}

Now, choose a sequence of discs $(B_n)_{n\geq 1}$ inside $B$ as given by Lemma~\ref{dbislem} relative to $M$ and the arcs  $\rho_+$,
$h(\rho_-)$, $h(\rho_+)$, $h^{-1}(\mu_-)$. 

If $u_1 >[\gamma_1]$, that is, if between $[\rho_-]$ and $[\rho_+]$ on the line of prime ends there exist
 prime ends of the form $[\delta]$ such that $\delta$ cuts the segment $p_1p_{-1}$ for every representative $\delta$, then we choose $n_1$ large
enough so that Fr$B_{n_1}$ cuts $p_1p_{-1}$ inside the bounded region determined by $c$ and $M$ and we call $r$ the last point of intersection on
$p_1p_{-1}$ of $p_1p_{-1}$ with Fr$B_{n_1}$.

\begin{center}
\begin{tikzpicture}[scale=1] \useasboundingbox (0,0) rectangle (15,5); 
\draw (1,1)--(12,1);
\draw (12,.8) node[right]{$p_{-1}$};
\fill (12,1) circle (1.5pt);
\draw [very thick] plot[smooth] coordinates {(13,2) (12,1) (11.5,.5) (11,1) (10.5,1.5) (10,1) (9.5,.1) (9,1) (8,2) (7.5,2.5)};
\draw (13,2) node[right]{$F$};
\draw plot[smooth] coordinates {(12.5,2) (11.5,.8) (10.5,2) (9.5,.5) (9.3,1)(8.5,2) (8,2.5)};
\draw (12.5,2) node[above]{Fr$B_{n_1}$};
\fill (9,1) circle (1.5pt);
\draw (9.2,.8) node[left]{$q_{-1}$};
\fill (9.3,1) circle (1.5pt);
\draw (9.4,1) node[above]{$r$};
\draw (13,3.5)--(8,4);
\draw (13,3.5) node[right]{Fr$B$};
\draw (10.5,1.5)--(10.5,3.8);
\draw (10.5,3) node[right]{$\rho_-$};
\draw [out=45,in=-90] (8.2,1.8) to (10,3.8);
\draw (10,3.6) node[left]{$\rho_+$};
\draw [very thick] plot[smooth] coordinates {(.5,.5) (1,1) (1.5,1.5) (2,1) (2.5,.1) (3,1) (3.5,1.5) (4,1) (4.5,.5) (4.8,1) (4.6,4)};
\fill (1,1) circle (1.5pt);
\fill (3,1) circle (1.5pt);
\draw (.5,.5) node[below]{$F$};
\draw (1,.8) node[right]{$p_1$};
\draw (3,.8) node[right]{$q_1$};
\draw plot[smooth] coordinates {(.5,1) (1.5,2) (2.5,.8) (2.8,1) (3.5,2) (4.5,1.5) (4.4,4)};
\fill (2.8,1) circle (1.5pt);
\draw (2.8, 1.2) node[left]{$t$};
\draw (0,1.1) node[below]{Fr$B_{n_2}$};
\draw [out=45,in=-90] (.1,1.5) to (4.2,4);
\draw (.1,1.5) node[left]{Fr$B$};
\draw (1.8,1.3)--(1.8,2.3);
\draw (1.8,2) node[right]{$\mu_+$};
\draw (4.7,2.5)--(3.9,3.2);
\draw (4.2,2.8) node[below]{$\mu_-$};
\end{tikzpicture}
\end{center}

In the opposite case (that is $u_{-1} = [\gamma_{-1}]$) we let $r$ be the last point on $\gamma_{-1}$ (from $x_0$ to $F$) of $\gamma_{-1} \bigcap
\mbox{\rm Fr}B_{n_1}$ where $n_1$ is choosen large enough so that the diameter of the subarc $rp_{-1}$ of $\gamma_{-1}$ is less than $\epsilon$.

\begin{center}
\begin{tikzpicture}[scale=1] \useasboundingbox (0,0) rectangle (15,5); 
\draw (1,1)--(12,1);
\draw [very thick] (13,.5)--(12,1)--(10,3);
\draw (13,.9)--(12,1.4)--(10,3.4);
\draw (12,1)--(12,4);
\draw (12.5,.8)--(12.5,2)--(11.5,2.5)--(11.5,1.5);
\fill (12,1) circle (1.5pt);
\fill (12,1.4) circle (1.5pt);
\draw (13,.5) node[right]{$F$};
\draw (13,.9) node[right]{Fr$B_{n_1}$};
\draw (12,4) node[right]{$\gamma_{-1}$};
\draw (12.5,1.8) node[right]{$\rho_-$};
\draw (11.4,2) node[right]{$\rho_+$};
\draw (12,1.5) node[right]{$r$};
\draw (11.5,1) node[below]{$p_{-1}=q_{-1}$};
\draw [very thick] (.5,.5)--(1,1)--(3,4);
\draw (.5,.5) node[left]{$F$};
\draw (.4,.9)--(1,1.9)--(3,4.9);
\draw (.4,.9) node[left]{Fr$B_{n_2}$};
\fill (1,1) circle (1.5pt);
\fill (1,1.9) circle (1.5pt);
\draw (1,1)--(1,4);
\draw ((1,4) node[left]{$\gamma_1$};
\draw (.7,.7)--(.7,2.3)--(1.5,3.3)--(1.5,1.7);
\draw (1,1.9) node[right]{$t$};
\draw (1.7,1) node[below]{$p_1=q_1$};
\draw (.7,2.3) node[left]{$\mu_+$};
\draw ((1.45,2.65) node[right]{$\mu_-$};
\end{tikzpicture}
\end{center}

Similarly, if between $[\mu_-]$ and $[\mu_+]$ on the line of prime ends there exist
 prime ends of the form $[\delta]$ such that $\delta$ cuts the segment $p_1p_{-1}$ for every representative $\delta$, then we choose $n_2$ large
enough so that Fr$B_{n_2}$ cuts $p_1p_{-1}$ inside the bounded region determined by $d$ and $M$ and we call $t$ the first point of intersection on
$p_1p_{-1}$ of $p_1p_{-1}$ with Fr$B_{n_2}$.

In the opposite case (that is $u_1 = [\gamma_1]$) we let $t$ be the last point on $\gamma_1$ (from $x_0$ to $F$) of $\gamma_1 \bigcap
\mbox{\rm Fr}B_{n_2}$ where $n_2$ is choosen large enough so that the diameter of the subarc $tp_1$ of $\gamma_1$ is less than $\epsilon$.

On the arc $rt$ (which is a subarc of $\gamma_{-1} p_{-1}p_1\gamma_1$), let $q_{-1}$ and $q_1$ be the first and last point on $M$. We now
consider the cut $c'$ made of the arc $q_{-1}r$, the subarc of Fr$B_{n_1}$ from $r$ to $\rho_+$ and the subarc of $\rho_+$ from this last point
to $F$, and we join irreductibly  the part of $\rho_+$ in $c'$ to $h(q_{-1}r)$ by an arc of Fr$B_{m_1}$ for some $m_1>n_1$ large enough so that
Fr$B_{m_1}$ cuts the part of $\rho_+$ in $c'$ and $h(q_{-1}r)$.

\begin{center}
\begin{tikzpicture}[scale=1] \useasboundingbox (0,0) rectangle (15,5); 

\draw [very thick] plot[smooth] coordinates{(.5,2.5) (1.5,2.6) (2,2.4) (3,2.5) (4,2.6) (5,2.5) (6,2.6) (7,2.5) (8,2.4) (9,2.5) (10,2.6) (11,2.5) (12,2.4)};
\draw (12,2.4) node[right]{$F$};
\draw (11.5,2.45)--(11.5,5);
\draw (10,5)--(10,2.6);
\draw (9.5,5)--(12,5);
\draw (12,5) node[right]{Fr$B$};
\draw (9.5,3.5)--(12,3.5);
\draw (12,3.5) node[right]{Fr$B_{n_1}$};
\draw (11,2.5)--(11,3.5);
\fill (11,2.5) circle (1.5pt);
\fill (11,3.5) circle (1.5pt);
\draw (11,3.5) node[above]{$r$};
\draw (11,2.5) node[below]{$q_{-1}$};
\draw (11.5,4) node[right]{$\rho_-$};
\draw (10,4) node[left]{$\rho_+$};
\draw (11,3) node[left]{$c'$};
\draw (9.5,2.55)--(9.5,0);
\draw (8,2.4)--(8,0);
\draw (7.5,0)--(10,0);
\draw (7.5,1.25)--(10,1.25);
\draw (10,0) node[right]{Fr$B$};
\draw (10,1.25) node[right]{Fr$B_{n_2}$};
\draw (9.5,.8) node[right]{$\mu_+$};
\draw (8,.8) node[left]{$\mu_-$};
\draw (9,2.5)--(9,1.25);
\fill (9,2.5) circle (1.5pt);
\fill (9,1.25) circle (1.5pt);
\draw (9,1.25) node[below]{$t$};
\draw (9,2.5) node[above]{$q_1$};
\draw (9,2) node[left]{$d'$};
\draw (1,2.55)--(1,.5);
\draw (1,.5) node[right]{$h^{-1}(\mu_-)$};
\draw (2,2.4)--(2,1.5);
\draw (1,1.5)--(2,1.5);
\fill (2,1.5) circle (1.5pt);
\draw (2,1.5) node[below]{$h^{-1}(t)$};
\draw (1,1.5) node[left]{$h^{-1}(d')$};
\draw (3,2.5)--(3,4);
\draw (3,4) node[right]{$h(\rho_+)$};
\draw (4,2.6)--(4,3.5);
\fill (4,3.5) circle (1.5pt);
\draw (4,3.5) node[right]{$h(r)$};
\draw (4,3.5)--(3,3.5);
\draw (3,3) node[right]{$h(c')$};
\draw (4,3)--(10,3);
\draw (6,3) node[above]{Fr$B_{m_1}$};
\draw plot[smooth] coordinates{(3,3) (.5,3) (0,2.5) (.5,2) (1,2)};
\draw (.5,3) node[above]{Fr$B_l$};
\draw (2,2)--(8,2);
\draw (5,2) node[below]{Fr$B_{m_2}$};
\end{tikzpicture}
\end{center}

Similarly,  we now
consider the cut $d'$ made of the arc $q_1 t$, the subarc of Fr$B_{n_2}$ from $t$ to $\mu_-$ and the subarc of $\mu_-$ from this last point
to $F$, and we join irreductibly  the part of $\mu_-$ in $d'$ to $h^{-1}(q_1 t)$ by an arc of Fr$B_{m_2}$ for some $m_2>n_2$ large enough so
that
Fr$B_{m_2}$ cuts the part of $\mu_-$ in $c'$ and $h^{-1}(q_1 t)$.

\begin{center}
\begin{tikzpicture}[scale=1] \useasboundingbox (0,0) rectangle (15,5); 

\draw [very thick] plot[smooth] coordinates{(.5,2.5) (1.5,2.6) (2,2.4) (3,2.5) (4,2.6) (5,2.5) (6,2.6) (7,2.5) (8,2.4) (9,2.5) (10,2.6) (11,2.5) (12,2.4)};
\draw [very thick] plot[smooth] coordinates{(2,2.4) (3,2.2) (4,2) (5,1.8) (6,1.6) (7,1.4) (8,1.2) (9,1) (10,1)};
\draw [very thick] plot[smooth] coordinates{(6,1.6) (7,1.8) (8,2) (9,2) (10,2) (11,2)};
\draw (12,2.4) node[right]{$F$};
\draw (11,2.5)--(11,4);
\fill (11,2.5) circle (1.5pt);
\fill (11,3) circle (1.5pt);
\draw (11,3) node[right]{$r$};
\draw (11,2.65) node[right]{$q_{-1}$};
\draw (11,4) node[right]{$\gamma_{-1}$};
\draw (8,2.4)--(8,4);
\fill (8,2.4) circle (1.5pt);
\fill (8,3) circle (1.5pt);
\draw (8,3.2) node[right]{$h(r)$};
\draw (9,1)--(11,2.5);
\draw (9,1)--(9,0);
\fill (9,1) circle (1.5pt);
\fill (9,.5) circle (1.5pt);
\draw (9,.8) node[right]{$q_1$};
\draw (9,.3) node[left]{$t$};
\draw (9,0) node[right]{$\gamma_1$};
\draw (5,1.8)--(5,0);
\fill (5,1.3) circle (1.5pt);
\fill (5,1.8) circle (1.5pt);
\draw (5,1.1) node[left]{$h^{-1}(t)$};
\draw (7,1.8)--(11,1.8);
\draw (11,1.8) node[right]{$h(\gamma_1)$};
\fill (7,1.8) circle (1.5pt);
\draw plot[smooth] coordinates{(11,3) (8,3) (.5,3) (0,2.5) (.5,2) (5,1.3) (9,.5) (10,.5) (10.5,1) (9,1.5) (8,1.8)};
\fill (8,1.8) circle (1.5pt);
\draw (8,1.8) node[below]{$h(t)$};
\draw (10.5,1) node[right]{$h(\beta)$};
\draw (2,3) node[above]{$\alpha$};
\draw (6,1) node[below]{$\beta$};
\draw (9.7,1.5) node[right]{$\gamma$};
\draw [out=90,in=-90] (7,1.8) to (8,2.4);
\draw (7.25,2.2) node[left]{$h(\gamma)$};

\end{tikzpicture}
\end{center}

Finally, we join irreductibly $h(\rho_+)$ and $h^{-1}(\mu_-)$ by a subarc of Fr$B_l$, $l>m_1, m_2$, and we consider the simple closed curve from
$r$ to $h(r)$ to $h^{-1}(t)$, to $t$ to $r$ composed of subarcs of the cuts $c'$, $h(c')$, $h^{-1}(d')$, $d'$, of subarcs of Fr$B_{m_1}$,
Fr$B_l$, Fr$B_{m_2}$ and of the arc $tr$.

In every case, we have constructed a simple closed curve $\cal C$ in the $\epsilon$-neighborhood of $F$ composed of three consecutive arcs
$\alpha$ (from $r$ to $h^{-1}(t)$), $\beta$ (from $h^{-1}(t)$ to $t$), $\gamma$ (from $t$ to $r$) with disjoints
interiors such that, if $O$ denotes the origin of $\alpha$, $h(O) \in \alpha$, the endpoint of $\beta$ is the origin of $h(\beta)$ and $h(\beta)
\bigcap \beta$ is reduced to the endpoint of $\beta$, $h(\beta) \bigcap \alpha = \emptyset$ and $h(\gamma ) \bigcap \gamma = \emptyset$.
Furthermore, according to Lemma~\ref{dbislem}, we can find a sequence of points $r_0 = r, r_1, r_2, \ldots, r_n = h^{-1}(t)$ on $\alpha$ and for
each $i, 1 \leq i \leq n-1$ an arc $\rho_i$ inside
$\cal C$, in the region determined by $\gamma_{-1}$, $h^{-1}(\gamma_1)$ and $F$,
irreducible from $r_i$ to $F$ such that $\rho_i \bigcap \rho_{i+1}$ is at most a point in $F$. We can make these choices 
(choosing $n_1$, $m_1$, $l$ large enough) so that diam$\rho_ir_ir_{i+1}\rho_{i+1}$ is less than $3\epsilon$ so that
$\rho_ir_ir_{i+1}\rho_{i+1}$ is disjoint from its image under $h$.

\subsection{An index computation}

Our aim is now to compute the index of the non vanishing vector field $\zeta (u) = h(u)-u$ along $\cal C$: if it is non zero, we will have reached
the desired final contradiction.

We observe that an arc $\rho$ from $\alpha \subset {\cal C}$ towards
$F$ in $\overline {\mbox{\rm int}{\cal C}}$ preceding $h^{-1}(\gamma_1)$ (on the line of prime ends) verifies that $h(\rho)$, from $h(\alpha)$ to
$F$, precedes $\gamma_1$ and so must meet $\mbox{int}{\cal C}$.
Therefore, hypothesis 3) of Lemma~\ref{Kbislem} is satisfied. Lemma~\ref{orderlem} justifies hypothesis 4) and also says that in fact
 $h(\rho_ir_ir_{i+1}\rho_{i+1})$ lies outside the bounded region
$\Omega _i$ cut out from ${\bf R}^2 \setminus F$ by $\rho_ir_ir_{i+1}\rho_{i+1}$. Therefore, we can find a new non vanishing vector field 
 which points inward the curve $\cal C$ or on $\cal C$ as its origin describes the arc $\alpha$ and
which has the same index as $\zeta$ on $\cal C$.

Since $h(\gamma) \bigcap \gamma = \emptyset$, the distance between $h(\gamma)$ and the endpoint of $\beta$ (which is also the origin of $\gamma$)
is positive and we can find an isotopy supported in a small neighborhood, disjoint of $\gamma$, of a closed subarc of $\beta \setminus \mbox{\rm endpoint}(\beta)$ which moves
$h(\gamma)$ outside of $\beta$. This gives a new non zero vector field, which we write as $f(x)-x$, on $\cal C$ with the same index as $\zeta$.
Since the origin of $\beta$ can be joined to $\infty$ using $h^{-1}(\gamma_1)$, we see that $\beta$ lies in the unbounded region of $\RR^2
\setminus f(\cal C)$ except for its endpoint and we can apply Lemma
\ref{Tlem} to conclude that our original vector field has index 1, which concludes the proof by contradiction of Theorem~\ref{B}.

\begin{exa}
 Simple examples show the necessity of the hypothesis. For $\mbox{\rm int}F = \emptyset$, consider a translation and an invariant
half-plane, for $\RR^2 \setminus F $ connected, consider a translation and an invariant line. For $F$ compactly connected, consider the
translation $\tau$ given by $\tau (x, y) = (x+2, y)$ and for $F$ the set $\bigcup _{n \in \ZZ} \tau ^n (G)$ where $G$ is the union of the
half-lines $\{ (0, y), y \geq 0 \}$, $\{ (1, y), y \geq 0 \}$,  $\{ (2, y), y \geq 0 \}$ and of all the segments from $(1, n)$ to $(\dfrac{1}{n},
0)$ and to $(2-\dfrac{1}{n},0)$, $n\geq 2$.
\end{exa}

\begin{rem}
In \cite{Guillou2011}, Theorem~\ref{B} is proved under the further assumption that $h$ is fixed point free on ${\RR}^2 \setminus F$ using some
Brouwer theory
related to the plane translation theorem \cite{Brouwer1912} (compare to Remark 3.2). One can reduce the present Theorem~\ref{B} to the one in
\cite{Guillou2011} using
a covering argument as in \cite{Brown}.
 
\end{rem}

\section{Proof of Theorem~\ref{C}}

We will need some more elementary index computations.

\begin{lem}
\label{ilem5}
Let $\cal C$ be a simple closed curve positively oriented (int${\cal C}$ is on the left) in the plane and $\alpha \subset {\cal C}$ an arc from
$a$
to $b$. Let also $f, g : \alpha \rightarrow \RR^2$ be maps without fixed point such that $f(\alpha) \subset \mbox{int}{\cal C} \bigcup {\cal C}$ and 
$g(\alpha) \subset \mbox{ext}{\cal C} \bigcup {\cal C}$.
\begin{enumerate}
 \item If $f(a) = g(a)$ and $f(b)=g(b)$ lie in int$\alpha$, then $i(f, \alpha)-i(g, \alpha)=-1$. Idem if $a$ and $b$ lie inside the arc of 
 ${\cal C}$ from $f(a) = g(a)$ to $f(b)=g(b)$.
 
\item If $f(a)=g(a)$ lies in int$\alpha$ and $fb)=g(b)$ lies after $b$ outside of $\alpha$ on $\cal C$, then $i(f, \alpha)-i(g, \alpha)=0$.
Idem if $f(b)=g(b)$ lies in int$\alpha$ and $f(a)=g(a)$ lies before $a$ outside of $\alpha$.
\end{enumerate}
\end{lem}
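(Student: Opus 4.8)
The plan is to turn the statement into the computation of a single winding number and then to evaluate it on a normalised model. First I would observe that $i(f,\alpha)-i(g,\alpha)$ is the index along the \emph{closed} curve $\widetilde\alpha$ which runs along $\alpha$ from $a$ to $b$ and back, carrying the vector field equal to $f(x)-x$ on the outgoing piece and to $g(x)-x$ on the returning piece; this field is continuous and nowhere zero exactly because $f(a)=g(a)$, $f(b)=g(b)$ and $f,g$ have no fixed point, so the quantity to be computed is an integer, namely the winding number of $t\mapsto\widetilde f(t)-\widetilde\alpha(t)$ around $0$ (here I use property~4, $i(\cdot,\alpha^{-1})=-i(\cdot,\alpha)$, on the returning piece, and property~3). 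Being attached to a closed curve, this integer is invariant under orientation preserving homeomorphisms of the plane (property~5 of the index), so I would use Schoenflies to normalise to $\mathcal C=\mathbf S^1$ with $\mathrm{int}\,\mathcal C$ the open unit disc and $\mathrm{ext}\,\mathcal C$ its exterior, and then, by a further orientation preserving homeomorphism of the plane preserving $\mathbf S^1$, place $a$, $b$ and the common values $f(a)=g(a)$, $f(b)=g(b)$ at explicit points of $\mathbf S^1$ in the cyclic order prescribed by the hypothesis of the case at hand.

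Then I would replace $f$ by a model map. For each $t$ the set $\overline{\mathbf D^2}\setminus\{\alpha(t)\}$ is star-shaped (from $-\alpha(t)$), hence contractible, so the family of these sets forms a trivial bundle over the parameter interval and any two fixed point free maps $\alpha\to\overline{\mathbf D^2}$ with the same values at $a,b$ are homotopic rel $\{a,b\}$ through fixed point free maps; such a homotopy leaves the index unchanged (the endpoint vectors are fixed, so the lifts agree at the endpoints), and therefore $i(f,\alpha)=i(f_0,\alpha)$ where $f_0(\alpha(t))=(1-t)f(a)+t\,f(b)$ runs along the chord $[f(a),f(b)]$ — a fixed point free map, since the open chord lies in the open disc while $\alpha(0)\ne f(a)$ and $\alpha(1)\ne f(b)$. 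I would like to treat $g$ the same way; the obstruction is that $\mathrm{ext}\,\mathcal C\cup\mathcal C$ is not simply connected, so the homotopy class of $g$ rel $\{a,b\}$ in that region carries one integer of information (a winding around $\infty$). This is precisely the content that the positional hypotheses are there to fix: whether the common endpoints sit inside $\alpha$ or just outside it determines which side $g(\alpha)$ can be slid to, and hence pins the class of $g$; granting that, I replace $g$ by the corresponding model arc $g_0$ lying on (or just outside) $\mathcal C$ with the same endpoints.

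With $f_0$ and $g_0$ explicit, $\widetilde f-\widetilde\alpha$ is an explicit non-vanishing field along $\widetilde\alpha$, and I would finish exactly as in the proofs of Lemmas~\ref{ilem2} and~\ref{ilem3}, by a straight-line homotopy $(u,t)\mapsto\widetilde f(t)-u\,\widetilde\alpha(t)$ (or a variant shrinking the $\mathbf S^1$-part), reading off the winding number, which comes out $-1$ in case~1 and $0$ in case~2. Equivalently — and this is perhaps the cleanest packaging — one inserts an auxiliary map $h_0:\alpha\to\mathcal C$ with $h_0(a)=f(a)$, $h_0(b)=f(b)$, so that the contractibility argument inside $\overline{\mathbf D^2}$ gives $i(f,\alpha)=i(h_0,\alpha)$, and then $i(h_0,\alpha)-i(g,\alpha)$ is computed by Lemma~\ref{ilem2}: in case~1 the arcs $h_0(\alpha)$ and $g(\alpha)$ can be arranged (after a harmless perturbation off $\mathcal C$) to bound a Jordan curve whose interior contains $\mathrm{Im}\,\alpha$, with the orientation induced by $g$ from $g(a)$ to $g(b)$ positive, so Lemma~\ref{ilem2}(2) gives $i(g,\alpha)-i(h_0,\alpha)=1$, i.e. $-1$; in case~2 they can be put in the complement of $\mathrm{Im}\,\alpha$ together with a half-line to infinity issuing from an endpoint of $\alpha$, so Lemma~\ref{ilem2}(1) gives $0$. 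The ``Idem'' statements are the mirror configurations and go through verbatim.

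The step I expect to be the real work is the last one: the case combinatorics. One must check, in each prescribed position of $a,b,f(a),f(b)$ on $\mathcal C$, that the model arcs can genuinely be routed so that the Jordan-curve hypothesis (case~1) or the ``disjoint from $L\cup\mathrm{Im}\,\alpha$'' hypothesis (case~2) of Lemma~\ref{ilem2} holds with the correct orientation, and that every intermediate homotopy stays fixed point free — and it is exactly in the distinction ``$f(a)$ lies in $\mathrm{int}\,\alpha$'' versus ``$f(b)$ lies after $b$'' that the value $-1$ separates from $0$. The secondary subtlety, already flagged, is the non-simple-connectedness of $\mathrm{ext}\,\mathcal C\cup\mathcal C$, which forces one either to leave $g$ untouched and feed it directly into Lemma~\ref{ilem2}, or to argue inside $\mathbf S^2$, where $\overline{\mathrm{ext}\,\mathcal C}$ is a disc.
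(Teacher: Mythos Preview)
Your approach is valid but considerably more elaborate than the paper's. The paper gives essentially a two-line argument: Schoenflies lets one take $\alpha$ to be a vertical segment with $\mathcal C$ (and hence $\mathrm{int}\,\mathcal C$) lying on the left of the supporting line; then the unit vector $\dfrac{f(\alpha(t))-\alpha(t)}{|f(\alpha(t))-\alpha(t)|}$ runs from $(0,1)$ to $(0,-1)$ \emph{without ever pointing to the right}, which forces $i(f,\alpha)=-\tfrac12$, while ``similarly'' $i(g,\alpha)=+\tfrac12$, giving the difference $-1$. Case~2 and the mirror variants are handled the same way. So the paper never forms your closed curve $\widetilde\alpha$, never invokes Lemma~\ref{ilem2}, and never homotopes $f$ or $g$ to model arcs: it simply reads each half-integer off from the confinement of the unit vector to a closed half-circle.

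Your route through the winding number of $\widetilde f-\widetilde\alpha$ and Lemma~\ref{ilem2} recovers the same answer, and has the merit that the integer you compute is manifestly conjugation-invariant from the outset; but the case analysis you anticipate at the end is precisely what the half-plane observation collapses into a single sentence. Your flagged worry about the non-simple-connectedness of $\mathrm{ext}\,\mathcal C\cup\mathcal C$ is genuine and applies equally to the paper's ``similarly'' for $g$: after Schoenflies in $\mathbf R^2$, $\mathrm{ext}\,\mathcal C$ is not a half-plane, so nothing prevents $g(\alpha(t))-\alpha(t)$ from pointing left. The paper does not comment on this, tacitly reading the hypothesis as excluding any winding of $g$ around $\mathrm{int}\,\mathcal C$; one clean way to make either proof honest here is to apply Schoenflies on $\mathbf S^2$ so that $\mathcal C$ becomes a straight line and both sides become genuine half-planes.
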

\begin{proof}
As for 1), using Schoenflies theorem, we can think of the arc $ab$  as a vertical segment with $\cal C$ on the left of the line
supporting this
segment. In the first case, the vector $\dfrac{f(\alpha(t)) -\alpha(t)}{\vert f(\alpha(t))-\alpha(t) \vert}$ goes from $(0, 1)$ to $(0, -1)$
without ever pointing to the right so that $i(f, \alpha )= -\dfrac{1}{2}$ and similarly $i(g, \alpha)=\dfrac{1}{2}$. The second case and point 2)
are treated in the same way.
 \end{proof}

We consider now a non degenerated non separating compact connected set $K \subset {\bf R}^2$  and
an orientation preserving homeomorphism $h : {\bf R}^2 \rightarrow {\bf R}^2$ preserving $K$: $h(K) = K$. According to Theorem~\ref{A}, $h$ admits
a fixed point in $K$.

The simple case $K = [-1, 1] \times \{0\} \subset {\bf R}^2$ and $h$ a $\pi$-rotation around $(0, 0)$, shows that some extra
hypothesis in Theorem~\ref{A} is needed in order to get two fixed points in $K$. What follows is a formal version of the idea of preserving the
sides of $[0, 1] \times \{0\}$.

We suppose further that the circle of prime ends of ${\bf R}^2 \setminus K$ splits into two (non degenerated) arcs $a_1$ and $a_2$ with the same
endpoints such that
$\bigcup _{p \in a_i}{\rm I}(p) = K$, $i = 1, 2$, where ${\rm I}(p)$ is the impression of the prime end $p$ (and therefore int$K = \emptyset$).

Theorem~\ref{C} then states that if the orientation preserving homeomorphism of the circle of prime ends induced by $h$ preserves $a_1$
and $a_2$ (that is fixes the common endpoints of $a_1$ and $a_2$), then $h$ admits two fixed points in $K$

For the proof we will argue by contradiction and suppose that $h$ has only one fixed point $p_0 \in K$. 

\begin{lem}
 There is no accessible periodic point of period $k>1$ in $K$.
\begin{proof}
 Suppose there exist an accessible periodic point $p$ in $K$ of period $k>1$ and let $\gamma$ be an access arc for $p$. We can 
 suppose that $[\gamma] \in a_1$ and, since the orientation preserving homeomorphim of the circle of prime ends induced by $h$ has no periodic point (only fixed points), 
 we can suppose that $[\gamma]$, $[h(\gamma])$, ..., $[h^k(\gamma)]$ are represented by disjoint arcs $\gamma_0$, $\gamma_1$, ..., $\gamma_k$
 except that $\gamma_0 \bigcap \gamma_k = \{ p\}$. We can find Jordan curve $J$ by joining irreductibely $\gamma$ and $\gamma_k$ inside $\RR^2 \setminus K$,
 such that int$J$ contains $h(p)$, ..., $h^{k-1}(p)$,
 so that int$J \bigcap K \not = \emptyset$ but no point of int$J \bigcap K$ is endpoint of an access arc $\delta$ with $[\delta] \in a_2$: a contradiction.
\end{proof}
\end{lem}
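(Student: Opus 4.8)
The plan is to complete the contradiction already begun. Assume $p \in K$ is accessible and $h$-periodic of period $k > 1$, fix an access arc $\gamma$ to $p$, and after renaming assume $[\gamma] \in a_1$. The only input from dynamics is one-dimensional: by the hypothesis of Theorem~\ref{C} the orientation preserving circle homeomorphism $\hat h$ fixes the two common endpoints of $a_1$ and $a_2$, and an orientation preserving homeomorphism of the circle with a fixed point has no periodic point of period $>1$ and restricts to an orientation preserving homeomorphism of the interval $a_1$ fixing its endpoints; hence the forward orbit of any point of $a_1$ is monotone. Applying this to $[\gamma]$ (which is not $\hat h$-fixed, else $h(p)=p$), the classes $[\gamma], \hat h([\gamma]), \dots, \hat h^{k}([\gamma])$ are pairwise distinct, lie in $a_1$, and occur there in monotone order; note $\hat h^{k}([\gamma]) \ne [\gamma]$ although both have endpoint $p$, since $h^{k}(p)=p$.

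Next I would invoke the basic facts of prime-end theory recalled above: choose pairwise disjoint access arcs $\gamma_0, \gamma_1, \dots, \gamma_k$ representing $[\gamma], \hat h([\gamma]), \dots, \hat h^{k}([\gamma])$, each running from a point $e_i$ of a circle $C$ surrounding $K$ (Lemma~\ref{dlem}) to its endpoint inside the closed disc bounded by $C$, disjoint except that $\gamma_0 \cap \gamma_k = \{p\}$. Since the classes are monotone in $a_1$, the $e_i$ occur on $C$ in this order, and the arc of $C$ between $e_0$ and $e_k$ that avoids $e_1, \dots, e_{k-1}$ carries all outer endpoints of $a_2$. Let $\beta$ be an arc joining $e_k$ to $e_0$ outside the open disc, chosen so that the Jordan curve $J = \gamma_0 \cup \gamma_k \cup \beta$ has $e_1, \dots, e_{k-1} \in \mbox{int}J$; then $\gamma_1, \dots, \gamma_{k-1} \subset \mbox{int}J$ (each is disjoint from $\gamma_0 \cup \gamma_k$ and, staying in the disc, from $\beta$), so $h(p), \dots, h^{k-1}(p) \in \mbox{int}J$. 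As $\gamma_0, \gamma_k$ meet $K$ only at $p$ and $\beta$ misses $K$, we get $J \cap K = \{p\}$; hence $\mbox{int}J \cap K$ is a relatively open subset of $K$, nonempty because it contains $h(p)$.

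The decisive step, which I expect to be the main obstacle, is to prove that no point of $\mbox{int}J \cap K$ is the endpoint of an access arc $\delta$ with $[\delta] \in a_2$. Given such $\delta$ with endpoint $x \in \mbox{int}J \cap K$ (so $x \ne p$), I would choose, using the separation properties of access arcs, a representative of $[\delta]$ that is disjoint from the already fixed arcs $\gamma_0$ and $\gamma_k$ and that stays in the closed disc bounded by $C$ except at its endpoint $e_\delta \in C$. Since $[\delta] \in a_2$, the point $e_\delta$ lies on the $C$-arc between $e_0$ and $e_k$ avoiding $e_1, \dots, e_{k-1}$, which the construction of $J$ places in $\mbox{ext}J$; on the other hand $\delta$ is disjoint from $\beta$ (it stays in the disc while $\beta$ stays outside it, and $e_\delta \ne e_0, e_k$), disjoint from $\gamma_0 \cup \gamma_k$ by choice, and avoids $p$ since it meets $K$ only at $x$; so $\delta$ is disjoint from $J$, contradicting that it runs from $e_\delta \in \mbox{ext}J$ to $x \in \mbox{int}J$. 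Making precise this appeal to prime-end theory — that a representative of $[\delta]$ can be taken disjoint from the previously chosen $\gamma_0, \gamma_k$ while confined to the disc — is the technical heart of the argument.

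Finally, the standing hypothesis $\bigcup_{q \in a_2} \mbox{I}(q) = K$ is precisely the statement that the endpoints of access arcs with prime end class in $a_2$ are dense in $K$. Since $\mbox{int}J \cap K$ is a nonempty relatively open subset of $K$, it must contain such an endpoint, contradicting the previous paragraph. Therefore $K$ admits no accessible periodic point of period $k > 1$.
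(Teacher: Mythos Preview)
Your proposal is correct and follows essentially the same route as the paper: build a Jordan curve $J$ from $\gamma_0\cup\gamma_k$ together with a connecting arc in $\RR^2\setminus K$, check that $h(p),\dots,h^{k-1}(p)\in\mbox{int}J$, and then contradict the density of $a_2$-accessible points by showing no such point lies in $\mbox{int}J\cap K$. You have in fact supplied considerably more detail than the paper's terse proof --- in particular you correctly isolate as the ``technical heart'' the choice of a representative of $[\delta]$ disjoint from the already fixed $\gamma_0,\gamma_k$, a step the paper simply asserts; your outline for handling it (via the cyclic-order/separation properties of access arcs relative to the circle $C$) is sound.
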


\begin{lem}
 There exist access arcs $\gamma$ with $[\gamma ] \in a_1$ and $\delta$ with $[\delta] \in a_2$ with endpoints $p$ and $q$ and an
arc $\alpha$ from $p$ to $q$ such that $\gamma \bigcup \alpha \bigcup \delta$ is a arc in $\RR^2$, $h(\alpha) \bigcap \alpha =
\emptyset = h^2(\alpha) \bigcap \alpha$ and $h(\gamma) \bigcap \alpha =\emptyset = h^{-1}(\gamma) \bigcap \alpha$.
 (Possibly $p=q$ and $\alpha$ is reduced to a point).
\begin{proof}
 Choose some access arc $\gamma$ with
$[\gamma] \in a_1$ and with endpoint $p \not = p_0$ in $K$. Using Schoenflies theorem one
can think of $\gamma$ as a straight segment. Let then $B$ be an euclidean disc such that $p
\in \mbox{int}B$, $p_0 \notin B$, $B \bigcap h(\gamma) = \emptyset = B \bigcap h^{-1}(\gamma)$, $B \bigcap h^{-1}(B) = \emptyset = B \bigcap h(B)$
and $B \bigcap h^2(B)=\emptyset$. 
 Let $\tilde \delta$ be an access arc with $[\tilde \delta] \in a_2$ and with endpoint $\tilde q \in B \bigcap K$. Inside $B$ the segment $\tilde \alpha$ from
$p$ to $\tilde q$ satisfies $\tilde \alpha \bigcap \gamma = \{ p \}$. 
 We can suppose $\tilde \delta$ short enough so that $\tilde \delta \subset B$ and therefore $\tilde \delta \bigcap h(\tilde \alpha) = \emptyset$. 
 We now follow $\tilde \delta$ from its origin to $\tilde q$ until we meet
$\tilde \alpha$. We then follow $\tilde \alpha$
towards $p$ until we reach an
accessible point $q$ on $K \bigcap \tilde \alpha$; the path followed is an access arc $\delta$ for $q$ and we define $\alpha$ as the part of
$\tilde \alpha$ between $p$ and $q$. Surely $[\delta] \in a_2$ for otherwise, between $[\delta]$ and $[\tilde \delta]$ there would be an endpoint of $a_1$
which is a fixed prime end in contradiction to $B \bigcap h(B) = \emptyset$.
\end{proof}

\end{lem}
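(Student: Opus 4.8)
The plan is to take $\gamma$ essentially arbitrary and then manufacture $\alpha$ and $\delta$ by a small local surgery, using that near an accessible point $p \ne p_0$ the homeomorphism $h$ pushes a small disc off itself — and off its second iterate as well, since by the previous lemma an accessible point cannot have period $>1$. First I would use that the endpoints of access arcs with class in $a_1$ are dense in $K$ to choose such an access arc $\gamma$ whose endpoint $p$ is different from $p_0$; by Schoenflies I may assume $\gamma$ is a rectilinear segment. As $p$ is neither fixed nor periodic of period $2$ for $h$, I can pick a euclidean disc $B$ with $p \in \mathrm{int}\,B$, $p_0 \notin B$, $B \cap h^{\pm1}(\gamma) = \emptyset$, $B \cap h^{\pm1}(B) = \emptyset$ and $B \cap h^{2}(B) = \emptyset$.

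Since the endpoints of access arcs with class in $a_2$ are dense in $K$ and $B \cap K$ is a nonempty relatively open subset of $K$ meeting $\gamma$ only in $p$ (indeed $\gamma \cap K = \{p\}$), I would then choose an access arc $\tilde\delta$ with class in $a_2$, endpoint $\tilde q \in (B \cap K)\setminus\{p\}$, and short enough that $\tilde\delta \subset B$. Let $\tilde\alpha$ be the straight segment from $p$ to $\tilde q$; then $\tilde\alpha \subset B$ by convexity, and after a generic (non-collinear) choice of $\tilde q$ we have $\tilde\alpha \cap \gamma = \{p\}$. The surgery: follow $\tilde\delta$ from its free end to the first point $s$ at which it meets $\tilde\alpha$, then follow $\tilde\alpha$ towards $p$ to the first point $q$ of $K$; the resulting concatenation is a simple arc whose interior avoids $K$, hence an access arc $\delta$ for $q$, and I take $\alpha$ to be the subsegment of $\tilde\alpha$ from $p$ to $q$ (with $q = p$ and $\alpha$ a single point allowed). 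By the choices made, $\gamma \cap \alpha = \{p\}$ and $\alpha \cap \delta = \{q\}$, while $\gamma \cap \delta \subseteq (\gamma \cap \tilde\delta) \cup (\gamma \cap \tilde\alpha) \subseteq \{p\}$, so $\gamma \cup \alpha \cup \delta$ is an embedded arc.

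The four disjointness relations are now immediate from the construction of $B$: $\alpha \subset B$ gives $h(\alpha)\cap\alpha \subseteq h(B)\cap B = \emptyset$ and $h^{2}(\alpha)\cap\alpha \subseteq h^{2}(B)\cap B = \emptyset$, and $\alpha \subset B$ with $B \cap h^{\pm1}(\gamma) = \emptyset$ gives $h^{\pm1}(\gamma)\cap\alpha = \emptyset$. What remains is to check that $\delta$ still has class in $a_2$. I would do this with the cyclic order on prime ends: $\delta$ and $\tilde\delta$ coincide along their initial portion and then split at $s$, so the prime ends strictly between $[\delta]$ and $[\tilde\delta]$ are represented by access arcs lying in the bounded region cut out by the crosscut from $q$ through $s$ to $\tilde q$, which is contained in $B$. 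Were $[\delta]$ in $a_1$, then — since $[\delta] \ne [\tilde\delta]$ and neither is an endpoint of $a_1$ (their $K$-endpoints $q,\tilde q$ lie in $B$, whereas the endpoint of a prime end fixed by $\hat h$ is a fixed point of $h$, hence $p_0 \notin B$) — an endpoint of $a_1$ would lie strictly between $[\delta]$ and $[\tilde\delta]$; being a fixed prime end, its $K$-endpoint would be $p_0$, and it would have an access arc inside that small region near $\tilde q$, forcing $p_0$ into $B$: contradiction.

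The step I expect to be the real obstacle is precisely this last one: making sure the surgery does not jump to the wrong arc, i.e. that the family of prime ends swept between $[\delta]$ and $[\tilde\delta]$ is genuinely localised inside $B$. This is exactly what the careful choice of $B$ — together with the shortness of $\tilde\delta$ and of the segment $\tilde\alpha$ — is designed to secure, and it is where the ``$B \cap h(B) = \emptyset$''-style bookkeeping enters. The Schoenflies normalisation, the embeddedness of $\gamma \cup \alpha \cup \delta$, and the four disjointness relations are all routine once $B$ is in place.
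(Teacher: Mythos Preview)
Your proposal is correct and follows essentially the same route as the paper: Schoenflies-straighten $\gamma$, pick the small disc $B$ with the listed disjointness properties, take the straight segment $\tilde\alpha$ from $p$ to a nearby $a_2$-accessible point $\tilde q$, and perform exactly the surgery you describe (follow $\tilde\delta$ to its first hit on $\tilde\alpha$, then $\tilde\alpha$ towards $p$ to the first point of $K$) to obtain $\delta$ and $\alpha$; the disjointness conclusions then drop out of $\alpha\subset B$.

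One small imprecision worth flagging in your last paragraph: you speak of the fixed prime end at the tip of $a_1$ having a ``$K$-endpoint'' equal to $p_0$ and an ``access arc inside that small region''. An endpoint of $a_1$ need not be an accessible prime end, so neither phrase is a priori meaningful. The clean version of the argument (and what the paper's one-line ``in contradiction to $B\cap h(B)=\emptyset$'' is really invoking) is that the \emph{impression} of this fixed prime end is contained in $\overline B\cap K$ --- since it is a limit of endpoints of access arcs whose classes lie between $[\delta]$ and $[\tilde\delta]$, all of which land in $B$ --- and is $h$-invariant, which contradicts $B\cap h(B)=\emptyset$ (or, via Theorem~\ref{A}, forces $p_0\in \overline B$). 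With that adjustment your final step is complete.
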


For the rest of this section we suppose, without loss of generality, that $g=h$. We will also assume that $\gamma$ and $\delta$ are short enough so that the eight arcs 
$h^{-1}(\gamma)$, $\gamma$, $h(\gamma)$, $h^2(\gamma)$ and $h^{-1}(\delta)$, $\delta$, $h(\delta)$, $h^2(\delta)$ are all disjoints.

\medskip

Let $\cal A$ be a simple closed curve such that $K^+=K \bigcup \alpha \bigcup h(\alpha) \bigcup h^2(\alpha) \subset \mbox{\rm int}{\cal A}$ close enough
to $K^+$ so that $\cal A$ cuts $h^{-1}(\gamma)$,
 $\gamma$, $h(\gamma)$, $h^2(\gamma)$ and $h^{-1}(\delta)$, $\delta$, $h(\delta)$, $h^2(\delta)$ . That
curve is split by an irreductible subarc of $\gamma \bigcup \alpha \bigcup \delta$ from $\cal A$ to itself, containing
$\alpha$, into two arcs ${\cal A}_1$ and ${\cal A}_2$ with the same endpoints. These arcs,
joined with the preceding irreducible subarc of $\gamma \bigcup \alpha \bigcup \delta$, give rise to two simple
closed curves $\tilde {\cal A}_1$ and $\tilde {\cal A}_2$ with disjoint interiors such that one of them, say $\tilde {\cal A}_1$, does not contain
the fixed point $p_0$. Denote by $\tilde{\cal D}_1$ the closure of the interior of $\tilde{\cal A}_1$ and let $L = K \bigcap (\tilde {\cal D}_1
\bigcup h(\tilde {\cal D}_1) \bigcup h^2(\tilde {\cal D}_1))$. Notice that $p_0 \notin L$.

We orient $\cal A$  by going from  $\gamma$ to $\delta$ on $\tilde {\cal
A}_1$ without meeting $\alpha$ (equivalently, if $\alpha$ is non degenerated, we orient $\alpha$ from $q$ to $p$).

Given our hypothesis that there is only $p_0$ as fixed point in $K$, we can now find a neighborhood $U$ of $L \bigcup \alpha \bigcup h(\alpha)
\bigcup h^2(\alpha)$ such that $p_0 \notin U$ and $\epsilon >0$ such that
dist$(h(x), x) > 3\epsilon$ and dist$(h^{-1}(x), x) >3\epsilon$ on $U$.
Furthermore, we ask that $2\epsilon < \mbox{\rm dist}(\gamma \bigcap U, h(\gamma) \bigcap U), \mbox{\rm dist}(\delta \bigcap U, h(\delta) \bigcap
U)$, 
$2\epsilon
< \mbox{dist}(\alpha, h(\alpha))$, $\mbox{dist}(h(\alpha), h^2(\alpha))$ and $\mbox{dist}(h^2(\alpha), h^3(\alpha))$. Finally, let
$\epsilon >3\epsilon '>0$
be such that if dist$(x, y)<3\epsilon '$ then dist$(h(x), h(y))<\epsilon$ and dist$(h^{-1}(x), h^{-1}(y))<\epsilon$.

Our aim is now to find a closed curve $\cal C$ such that $L \subset \hbox{int}{\cal C} \subset U$ and to compute the index of the vector field
$\zeta (x) = h(x) - x$ on $\cal C$ (or the one of $\zeta '(x) = h^{-1}(x)-x$). If it is non zero, we will have reached a contradiction proving the
theorem.

 Let $\hat{L}$ be $L^+=L \bigcup \alpha \bigcup h(\alpha) \bigcup h^2(\alpha)$ plus all the bounded components of $\RR^2 \setminus L^+$. We now
apply
Lemma~\ref{dbislem} to $\hat{L}$ and the arcs $h^{-1}(\gamma)$, $\gamma$ , $h(\gamma)$, $h^2(\gamma)$ and $h^{-1}(\delta)$, $\delta$ ,
$h(\delta)$, $h^2(\delta)$ to get an arc $\eta$ from
$h(\gamma)$ to $h(\delta)$ in $U\setminus \hat{L}$ which is $\epsilon '$-close to $\hat{L}$. Adding subarcs of $h(\gamma)$ and $h(\delta)$ to
$\eta \bigcup h(\alpha)$ we get an oriented simple closed curve $\cal C$. By construction the
fixed point $p_0$ does not belong to int$\cal C$. 

The arc $\eta$ comes equiped with a sequence of
successive points, $r_0 \in h(\gamma), r_1, \ldots, r_n \in h(\delta)$ such that
diam$r_ir_{i+1}<\epsilon'$ for each $i,
0 \leq i \leq n-1$, and for each $i, 1 \leq i \leq n-1$, an arc $\rho_i$
inside $\cal C$, disjoint from all the arcs $h^{-1}(\gamma)$, $\gamma$ , $h(\gamma)$, $h^2(\gamma)$ and $h^{-1}(\delta)$, $\delta$ ,
$h(\delta)$, $h^2(\delta)$, irreducible from $r_i$ to $\hat{L}$, such that diam$\rho_i< \epsilon '$ and therefore
so that each one of the cuts $\rho_i r_i
r_{i+1}\rho_{i+1}$ of ${\bf R}^2 \setminus K$ is
disjoint from its image under $h$ or $h^{-1}$. 

We forget the $\rho_i$ with endpoint on $\alpha$, $h(\alpha)$ or $h^2(\alpha)$ (recall that these three arcs are disjoint). By choice of $\epsilon'$,
we still have cuts disjoint from their
images under $h$or $h^{-1}$. Indeed, if $c$ is a cut of ${\bf R}^2 \setminus K$ subarc of $\alpha$, $h(\alpha)$ or $h^2(\alpha)$, $\rho_{k+1},
\ldots, \rho_{l-1}$ the $\rho_i$ with endpoint on $c$ and $d$ is the cut $\rho_k r_k r_l \rho_l$ obtained by forgetting $\rho_{k+1}, \cdots,
\rho_{l-1}$, then every point of $d$ has distance less than $3\epsilon '$ to $c$, therefore every point of $h(d)$ has distance less than
$\epsilon$ to $h(c)$ and $h(d) \bigcap d = \emptyset$ since dist$(c, h(c))>2\epsilon$.

We will distinguish four cases according to the order of the pairs of prime ends $([\gamma], h([\gamma])) \in a_1$ and $([\delta], h([\delta]))
\in a_2$ on the circle of prime ends.

\subsection{First case} $h([\gamma])$ precedes $[\gamma]$ and $h([\delta])$ precedes $[\delta]$.

First remark that $\gamma \bigcup \alpha \bigcup \delta$ separates int$\cal A$ into two regions
and by hypothesis in this case the parts of $h(\gamma)$ and $h(\delta)$ close to $\cal A$ do not belong to the same region. Therefore $h(\delta)$
has
to meet $\alpha$ before ending in $h(q)$ and $p$ and $q$ are separated by $h(\delta) \bigcup h(\alpha) \bigcup h(\gamma)$. Therefore
$p \in \mbox{\rm int}{\cal C}$ and $q \in \mbox{\rm ext}{\cal C}$ (even $\delta \subset \mbox{\rm ext}{\cal C}$).

Let $l$ be the last point of intersection of $\eta \bigcup r_nh(q)$ and $h^{-1}(\eta)$ (on $r_0h(q) \subset {\cal C}$ oriented from $r_0$ to
$h(q)$),
and $m$ be the first point of intersection of $\alpha$ and $h(\delta)$ on $\alpha$ (oriented from $q$ to $p$). Notice that $l$ precedes $m$ on
$\cal
C$ and that the arc $lm$ on $h^{-1}({\cal C})$ lies outside $\cal C$. Also if $l \in h(\delta)$, then, since $h(l) \in \eta$, $h(l)$ precedes $l$
on $\eta \bigcup r_n h(q)$.

Notice that the intersections $\alpha \bigcap \eta$, $h^{-1}(\eta) \bigcap h(\alpha)$ and $h(\alpha) \bigcap \alpha$ are empty.

\begin{center}
\begin{tikzpicture}[scale=1] \useasboundingbox (0,0) rectangle (15,5); 
\draw plot[smooth] coordinates{(12,1) (12,2) (12,4) (11,4) (2,4) (1,2.5) (2,1) (6,1) (7,1) (11,1) (12,1)};
\fill (12,1) circle (1.5pt);
\draw (12,1) node[right]{$h(q)$};
\fill (12,2) circle (1.5pt);
\draw (12,2) node[right]{$h(m)$};
\fill (12,4) circle (1.5pt);
\draw (12,4) node[right]{$h(p)$};
\fill (11,4) circle (1.5pt);
\draw (11,4) node[below]{$r_0$};
\fill (6,1) circle (1.5pt);
\draw (5.9,1) node[below]{$l$};
\fill (7,1) circle (1.5pt);
\draw (7,1) node[above]{$r_n$};
\fill (11,1) circle (1.5pt);
\draw (11.3,.9) node[above]{$m$};
\draw plot[smooth] coordinates{(10,3) (1.5,3) (0,2.5) (1.5,1.2) (3,2) (6,1) (7,.5) (10.5,.5) (11,.5) (11,1) (11,2) (10.5,2) (10.5,.85) (10,.85) (10,3)}; 
\fill (10,3) circle (1.5pt);
\draw (10,3) node[right]{$p$};
\fill (11,.5) circle (1.5pt);
\draw (11,.5) node[right]{$q$};
\draw (11,4)--(11,5);
\draw (11,5) node[right]{$h(\gamma)$)};
\draw (9,3)--(9,5) node[right]{$\gamma$};
\draw (10,2) node[right]{$\alpha$};
\draw (2,4) node[above]{$\eta$};
\draw (10.5,.5)--(10.5,0);
\draw (10.5,0) node[right]{$\delta$};
\draw (7,1)--(7,0);
\draw (7,0) node[right]{$h(\delta)$};
\draw (2,3) node[below]{$h^{-1}(\eta)$};
\draw (3.15,4.2)--(3,4.05)--(3.15,3.9);
\draw (3.15,3.2)--(3,3.05)--(3.15,2.9);

\end{tikzpicture}
\end{center}

We will compute the index of $h^{-1}$ along $\cal C$ as the sum of three contributions: the index of $h^{-1}$
along the subarc
$h(p)h(l)$ on $\cal C$, then along the subarc $h(l)h(m)$ and finally along the subarc $h(m)h(p) \subset h(\alpha)$ which we denote by $i_1 $,
$i_2$ and $i_3$ respectively. 

 We will distinguish two subcases according to the position of $h(l)$ which lies before or
after
$l$ on $\eta \bigcup r_nh(q)\subset \cal C$. 

\subsubsection{Subcase 1} $h(l)$ lies after $l$ on $\eta \bigcup r_nh(q)\subset \cal C$ (then, since $h(l) \in \eta$, $l$ and $h(l)\in \eta$): 

Let $k$ be the index such that $h(l)$ lies between $r_k$ and $r_{k+1}$. Using Lemma~\ref{Kbislem} on $K$ and the arc $h(p)h(l) \subset {\cal C}$
subdivided by the points $h(p), r_1, \dots, r_k, h(l)$ with the arcs $\rho_1, \ldots, \rho_k$ we get $i_1 = j_1 + n, n\geq 0$ where $j_1$ is the
index of a vector field whose origin describes
$h(p)h(l)$ while its extremity describes an arc from $p$ to $l$ inside $\cal C$. Indeed, hypothesis 3) of this Lemma is verified, for
if an arc $\rho$ goes from $h(p)h(l)$ towards $K$ then $h^{-1}(\rho)$, which is issued from $h^{-1}(\eta)$ before $l$ must step
into ${\cal C}$
since  $K$ does not meet the components of $\mbox{\rm int}h^{-1}({\cal C}) \bigcap \mbox{\rm ext}{\cal C}$ except perhaps
the one which contains $lm$ in its frontier. Hypothesis 4) too is verified at $h(p)$ since $\rho_1$ lies in the region determined by $\gamma$,
$h(\gamma)$ and $\hat{L}$ by choice of $\epsilon$ and therefore the region $\Omega_0$ determined by $ h(p)r_0r_0r_1\rho_1$ and $\hat{L}$ is
disjoint from its image by $h^{-1}$. It is verified also at $h(l) \in \eta$ since by choice of $\epsilon$ and $\epsilon '$, $l$ lies before $r_k$
on $\eta$, $h^{-1}(r_k)$ precedes $l$ on $h^{-1}(\eta)$ and $h^{-1}(\rho_k)$ lie inside $h^{-1}(\CaC)$.

Lemma~\ref{ilem5} imply that $i_2 = j_2$ where
$j_2$ is the index of a vector field whose origin describes $h(l)h(m)$ while its extremity describes an arc from $l$ to $m$ inside $\cal C$ and
Lemma~\ref{ilem3}
that $i_3 = j_3$ where $j_3$ is the index of a vector field whose origin describes $h(m)h(p)$ while its extremity describes an arc from $m$ to $p$
inside $\cal C$.

The sum $j_1+j_2+j_3$ is equal  to $1$ since it computes the index of a vector field whose origin describes $\cal C$ while its end point stays
inside $\cal C$. Therefore we get that the index of $h^{-1}$ along $\cal C$, which is $i_1+i_2+i_3$, is equal to $1 + n \geq 1$ in contradiction
to the
hypothesis that there is no fixed point for $h^{-1}$ inside $\cal C$.

\subsubsection{Subcase 2} $h(l)$ lies before $l$ on $\eta \bigcup r_nh(q)$: 

For the computation of $i_1$, we can repeat everything said in subcase 1, except for the verification of hypothesis 4 of Lemma~\ref{Kbislem} at $h(l)$.
 But now we want to get $i_1=j_1+n$ for some $n\geq 1$ and we need a more detailed study of the curves $\CaC$ and $h^{-1}(\CaC)$ near $h(l)$ and $l$.

 Notice first that since a cut $c$ subarc of $\alpha$ separates  $h(q)$ from $\infty$, the cut $h(c)$ separates $h^2(q)$ from $\infty$ and there is
a {\it special cut} (that is one of the form $\rho_i r_i r_{i+1} \rho_{i+1}$) which contains $h(c)$ in the bounded region it determines with $K$
 and which separates $h^2(q)$ from $\infty$. If we call $h(d)$
this special cut, then the cut $d$ separates $h(q)$ from $\infty$ and contains $c$ in its associated bounded region. Since $d \bigcap h(d) =
\emptyset$ there are apriori three possibilities for the relative position of $d$ and $h(d)$. But $d$ cannot be contained in the bounded region associated
to $h(d)$ since $h(\delta) \bigcap h(\alpha) = \{ h(q) \}$ and $h(d)$ cannot be contained in the bounded region determined by $d$ since in that case we
would have $l$ before $h(l)$ on $\eta \bigcup r_nh(q)$. 

\begin{center}
\begin{tikzpicture}[scale=1] \useasboundingbox (0,0) rectangle (15,5); 
\draw [very thick] plot[smooth] coordinates{(0,4.5) (1,4) (2,4.3) (3,4.5) (4,4.2) (5,4.5) (6,4.2) (7,4.4)};
\draw (7,4.4) node[right]{$K$};
\draw (5,4.5)--(5,0);
\draw (5,0) node[right]{$h(\delta)$};
\fill (5,4.5) circle (1.5pt);
\draw (5,4.5) node[above]{$h(q)$};
\draw (3,4.5)--(3,0);
\draw (3,0) node[right]{$h^2(\delta)$};
\fill (3,4.5) circle (1.5pt);
\draw (3,4.5) node[above]{$h^2(q)$};
\draw (0,2.5)--(5,2.5);
\draw (0,2.5) node[below]{$\eta$};
\fill (5,2.5) circle (1.5pt);
\draw (5,2.5) node[right]{$r_n$};
\fill (1,2.5) circle (1.5pt);
\draw (1,2.3) node[left]{$l$};
\draw (1,2.5)--(1,4);
\draw (6,4.2)--(6,2.5);
\draw (6,3) node[right]{$d$};
\draw plot[smooth] coordinates{(1,2.5) (2,1.5) (3,1) (4,1.5) (5,2) (6,2.5)};
\draw (2,4.3)--(2,2.5);
\draw (4,4.2)--(4,2.5);
\draw (4,3.8) node[right]{$h(d)$};
\fill (2,2.5) circle (1.5pt);
\draw (2,2.7) node[right]{$h(l)$};
\draw plot[smooth] coordinates{(2.5,4.4) (3,3.5) (3.5,4.4)};
\draw (3,3.5) node[right]{$h(c)$};
\draw plot[smooth] coordinates{(1.5,4.15) (1.5,2.5) (3,2) (4,2) (5,2.8) (5.5,4.4)};
\draw (5,2.9) node[right]{$c$};
\draw [dashed] plot[smooth] coordinates{(0,3) (1,2.4) (2,1.4) (3,.9) (4,1.4) (5,1.9) (6,2.4) (7,2.4)};
\draw (0,3) node[above]{$h^{-1}(\eta)$};
\draw [very thick] plot[smooth] coordinates{(8,4.5) (9,4) (10,4.3) (11,4.5) (12,4.2) (13,4.5)};
\draw (13,4.5) node[right]{$K$};
\draw (11,4.5)--(11,0);
\draw (11,0) node[right]{$h(\delta)$};
\fill (11,4.5) circle (1.5pt);
\draw (11,4.5) node[above]{$h(q)$};
\draw (8,4.5)--(8,1)--(13,1)--(13,4.5);
\draw (8,1.2) node[right]{$h(d)$};
\draw plot[smooth] coordinates {(9,4) (10,3) (11,2) (11.5,3) (12,4.2)};
\draw plot[smooth] coordinates {(10,4.3) (11,4) (11.5,4.3)};
\draw (11,3.9) node[right]{$c$};
\draw (11,2) node[right]{$h(c)$};
\end{tikzpicture}
\end{center}

Therefore the bounded
regions associated to $d$ and $h(d)$ are disjoint and $l$ is the
last point on $d$ (starting from the endpoint of $d$ before $h(\delta)$ on the circle of prime ends) of $\eta \bigcup r_nh(q)$ : between $l$ and $m$, $\eta
\bigcup h(\alpha)$ and $h^{-1}(\eta) \bigcup \alpha$ are disjoint.

\begin{center}
\begin{tikzpicture}[scale=1] \useasboundingbox (0,0) rectangle (15,5); 
\draw [very thick] plot[smooth] coordinates{(0,4.5) (1,4) (2,4.3) (3,4.5) (4,4.2) (5,4.5) (6,4.2) (7,4.4) (8,4.5) (9,4.3) (10,4.5)};
\draw (10,4.5) node[right]{$K$};
\draw (1,4)--(1,3)--(2,3)--(2,4.3);
\fill (1.5,4.1) circle (1.5pt);
\draw (1.5,4.1) node[above]{$h^2(q)$};
\draw (1,3.5) node[left]{$h(d)$};
\draw (4,4.2)--(4,3.2);
\draw (8,4.5)--(8,3.2);
\draw (6,4.2)--(6,0);
\draw (6,0) node[right]{$h(\delta)$};
\draw plot[smooth] coordinates{(4,3.2) (6,1) (8,3.2)};
\draw (0,3)--(6,3);
\draw (6,4.2) node[above]{$h(q)$};
\fill (6,3) circle (1.5pt);
\draw (6,3) node[right]{$r_n$};
\fill (6,4.2) circle (1.5pt);
\fill (1.2,3) circle (1.5pt);
\draw (1.2,3) node[below]{$h(l)$};
\fill (4.15,3) circle (1.5pt);
\draw (4.2,3) node[above]{$l$};
\draw [dashed] plot[smooth] coordinates{ (3,4) (4,3.1) (6,.9) (8.1,3.1) (10,3.2)};
\draw (3,4) node[below]{$h^{-1}(\eta)$};
\draw (8,4) node[left]{$d$};
\draw (0,3) node[below]{$\eta$};
\draw (.1,2.9)--(.2,3)--(.1,3.1);

\end{tikzpicture}
\end{center}

Therefore, between $h(\gamma)$ and $h^2(\delta)$, there exists a special cut $\tilde c$ which contains an end point of $a_1$ (invariant subarc of
the circle of prime ends) that is a fixed point on the circle of prime ends. Let $\tilde \Omega$ be the bounded region determined by  $\tilde c$
and $K$. If $\tilde \Omega \subset h^{-1}(\tilde \Omega)$ we get $n \geq 1$ (see Lemma~\ref{Kbislem}). If $ h^{-1}(\tilde \Omega) \subset
\tilde \Omega$, then that endpoint of $a_1$ is a repulsor for the map $\hat{h}$ induced by $h$ on the circle of prime ends and the position of $\delta$ and $h(\delta)$
(or $\gamma$ and $h(\gamma)$) on that circle imply that there is another fixed point between $[h(\gamma)]$ and $[h^2(\delta)]$ which is an attractor for
$\hat{h}$. This gives a special cut $\hat{c}$ with $\hat{\Omega} \subset h^{-1}(\hat{\Omega})$ so that in any case, $n\geq 1$.

Also $i_2=j_2 - 1$ by Lemma~\ref{ilem5}, $i_3 = j_3$ by Lemma~\ref{ilem3} and $j_1+j_2+j_3=1$. We conclude again that  $i_1+i_2+i_3$ is  equal
to $1+n-1 \geq 1$ to get the same contradiction.

\begin{rem} In the situation of subcase 1, $h(d)$ is in the bounded region determined by $d$ and one gets also $n \geq 1$ in that subcase (see the
picture), but
this information was not necessary there.
\end{rem}

\subsection{Second case} $h([\gamma])$ follows $[\gamma]$ and $[h(\delta)]$ precedes $[\delta]$.

We will compute the index of $h$ along the curve $h^{-1}(\cal C)$. 

Since $h(\gamma) \bigcap \gamma \bigcup \alpha \bigcup \delta = \emptyset$, one has $h(p) \in \mbox{\rm int}h^{-1}(\cal C)$. Also $h(q) \in
\mbox{\rm int}h^{-1}(\cal C)$, otherwise, since $h(\alpha) \bigcap (h^{-1}(\eta) \bigcup \alpha \bigcup \gamma) = \emptyset$, we would have that
$h(\alpha)$ cuts $\delta$ and so $h(\delta)$ would cut $\delta$.

Again, we will distinguish two subcases.

\subsubsection{Subcase 1} $h(\delta)$ cuts $\alpha$. And therefore $h(\alpha) \bigcap \delta = \emptyset$ and $h(\alpha) \subset \mbox{\rm
int}h^{-1}(\cal C)$.

\begin{center}
\begin{tikzpicture}[scale=1] \useasboundingbox (0,0) rectangle (15,5); 
\draw plot[smooth] coordinates{(12,1) (12,2) (12,4) (11,4) (2,4) (1,2.5) (2,1) (6,1) (7,1) (11,1) (12,1)};
\fill (12,1) circle (1.5pt);
\draw (12,1) node[right]{$q$};
\fill (12,3) circle (1.5pt);
\draw (12,3.1) node[left]{$h(n)$};
\fill (12,2) circle (1.5pt);
\draw (12,1.8) node[right]{$h(m)$};
\fill (12,4) circle (1.5pt);
\draw (12,4) node[right]{$p$};
\fill (11,1) circle (1.5pt);
\draw (11.15,1) node[above]{$m$};
\draw plot[smooth] coordinates{(10,3) (1.5,3) (0,2.5) (1.5,1.2) (3,2) (6,1) (7,.5) (9,.5)};
\draw plot[smooth] coordinates{(9,1) (12,2) (13,3.5) (12,3) (10,2) (10,3)}; 
\draw (9,0)--(9,1);
\fill (10,3) circle (1.5pt);
\draw (10,3) node[right]{$h(p)$};
\fill (10,2) circle (1.5pt);
\draw (10,2) node[left]{$h(q)$};
\draw (2,4) node[above]{$h^{-1}(\eta)$};
\draw (9,.5)--(9,0);
\draw (10.5,0) node[right]{$\delta$};
\draw (10.5,0)--(10.5,1);
\draw (9,0) node[right]{$h(\delta)$};
\draw (2,3) node[below]{$\eta$};
\draw (3.15,4.2)--(3,4.05)--(3.15,3.9);

\end{tikzpicture}
\end{center}

\begin{center}
\begin{tikzpicture}[scale=1] \useasboundingbox (0,0) rectangle (15,5); 
\draw [very thick] plot[smooth] coordinates{(1,1) (2,.5) (3,1) (4,1.5) (5,1) (6,1.5)};
\draw (6,1.5) node[right]{$K$};
\draw (5,1)--(5,4);
\fill (5,1) circle (1.5pt);
\draw (5,1) node[below]{$p$};
\draw (5,4) node[right]{$\gamma$};
\draw (3,1)--(3,4);
\fill (3,1) circle (1.5pt);
\draw (3.1,1) node[below]{$h(p)$};
\draw (3,4) node[right]{$h(\gamma)$};
\draw (1,3)--(5,3);
\fill (5,3) circle (1.5pt);
\draw (5,3) node[right]{$h^{-1}(r_0)$};
\draw [out=110,in=-100] (4,1.5) to (4,3);
\draw [out=110,in=-80] (2,.5) to (2,3.5);
\draw plot[smooth] coordinates{(1,3.7) (2,3.5) (3,2.7)};
\fill (3,2.7) circle (1.5pt);
\draw (3,2.7) node[right]{$r_0$};
\draw (1,3.7) node[left]{$\eta$};
\draw (2,2.5) node[left]{$\rho_1$};
\draw (4,3) node[above]{$h^{-1}(\rho_1)$};

\end{tikzpicture}
\end{center}

Let $h(m)$ (resp. $h(n)$) denote the first (resp. last)
intersection point of $h(\delta)$ and $\alpha$ on $\alpha$ oriented from $q$ to $p$.

We compute our index as the sum $i_1 + i_2$ where $i_1$ is the index of $h$ along the subarc $pm$ of $h^{-1}(\cal C)$ and $i_2$ the index of $h$
along the subarc $mp$ of $h^{-1}(\cal C)$. 

The arc $pm$ comes equipped with the points $h^{-1}(r_i)$ and the arcs $h^{-1}(\rho_i)$ which gives a sequence of special cuts on $pm$ disjoint
from their images under $h$. To apply Lemma~\ref{Kbislem}, we first verify its third hypothesis. If an arc $\rho$ goes
from $pm \subset h^{-1}(\cal C)$ to $K$, then $h(\rho)$ from $h(p)h(m)$ towards $K$ must go inside $h^{-1}(\cal C)$ since $K$
 does not meet the components of $\mbox{\rm ext}h^{-1}({\cal C}) \bigcap \mbox{\rm int}{\cal C}$. As for the fourth
hypothesis, note that since $h^{-1}(\rho_1) \bigcap h(\gamma) = \emptyset$, the
bounded region determined by $ph^{-1}(r_0)h^{-1}(r_1)h^{-1}(\rho_1)$ is contained in the the region between $\gamma$ and $h(\gamma)$ and does not
meet its image under $h$. At the other end of the arc $pm$, we have $ h^{-1}(\rho_{n-1}) \bigcap h(\delta) = \emptyset$ and so
$h^{-1}(\rho_{n-1})h^{-1}(r_{n-1})h^{-1}(r_n)m$ is contained in the region bounded by $\delta$ and $h(\delta)$, whence
hypothesis 4). Lemma~\ref{Kbislem}, which can now be
applied gives then $i_1 =j_1 +n$, $n\geq 0$, where $j_1$ be the index along $pm$ of a vector
field whose origin describes $pm$ while its endpoint describes a path inside $h^{-1}(\cal C)$ from $h(p)$ to $h(m)$. 

Let $j_2$ be the index along $mp$ of a vector field along $mp$ 
whose origin describes $mp$ while its endpoint follows the curve obtained by replacing in $h(m)h(p) \subset \cal C$ the subarc  $h(m)h(n)
\subset \cal C$  by the subarc of $\alpha$ with the same endpoints.

One has $i_2 = j_2$ since the subarcs from $h(m)$ to $h(n)$ on $\cal C$ and
$\alpha$ are homotopic rel their endpoints in $\RR^2 \setminus \delta$ (and $mn \subset \delta$)

Since $j_1 +j_2 =1$ (Lemma~\ref{ilem3}), we get $i_1 + i_2 = 1+n \geq 1$, a contradiction.

\subsubsection{Subcase 2} $h(\delta)$ does not cut $\alpha$

\begin{center}
\begin{tikzpicture}[scale=1] \useasboundingbox (0,0) rectangle (18,5); 
\draw plot[smooth] coordinates{(5,1) (5,4) (2,4) (1,1) (2,1) (5,1)};
\fill (5,1) circle (1.5pt);
\draw (5,1) node [right]{$p$};
\fill (5,4) circle (1.5pt);
\draw (5,4) node[right]{$q$};
\draw plot[smooth] coordinates{(3,3) ((1,3.5) (0,2.5) (1,2.5) (1.5,2.5) (1.5,1.5) (1.5,.5) (2,.5) (2,1) (3,2) (5,.5) (3,3)};
\draw (4,1)--(4,0);
\draw (4,0) node[right]{$\delta$};
\fill (4,1) circle (1.5pt);
\fill (3,3) circle (1.5pt);
\draw (3,3) node[right]{$h(p)$};
\fill (3,2) circle (1.5pt);
\draw (3,2) node[above]{$h(q)$};
\draw (2.5,1.75)--(2.5,0);
\draw (2.5,0) node[right]{$h(\delta)$};
\fill (2.5,1.75) circle (1.5pt);

\draw [very thick] plot[smooth] coordinates{(8,4) (9,3.5) (10,4) (11,3.8) (12,4)};
\fill (12,4) circle (1.5pt);
\draw (12,3.8) node[right]{$q$};
\fill (10,4) circle (1.5pt);
\draw (10,4) node[above]{$h(q)$};
\draw (8,1)--(12,1);
\draw (8,1) node[below]{$h^{-1}(\eta)$};
\fill (12,1) circle (1.5pt);
\draw (12,1) node[right]{$h^{-1}(r_n)$};
\draw (12,0)--(12,4);
\draw (12,3) node[right]{$\delta$};
\draw (11,1)--(11,3.8);
\draw (11.2,3.8) node[above]{$h^{-1}(\rho_{n-1})$};
\draw (10,0)--(10,4);
\draw (10,0) node[left]{$h(\delta)$};
\fill (10,2) circle (1.5pt);
\draw (10,2) node[right]{$r_n$};
\draw (8,2)--(10,2);
\draw (8,2) node[below]{$\eta$};
\draw (9,2)--(9,3.5);
\draw (9,3) node[left]{$\rho_{n-1}$};

\end{tikzpicture}
\end{center}

Let $i_1$ be the index of $h$ along $pq \subset h^{-1}(\cal C)$. The arc $pq$ is again equipped with the points $h^{-1}(r_i)$ and the arcs
$h^{-1}(\rho_i)$ which gives a sequence of special cuts on $pm$ disjoint
from their images under $h$.
Hypothesis 3. and 4. of Lemma~\ref{Kbislem} are verified as in subcase 1 above except for hypothesis 4. at $q$ where we use that
$h^{-1}(\rho_{n-1}) \bigcap h(\alpha) = \emptyset$ to show that the region determined by $h^{-1}(\rho_{n-1})h^{-1}(r_n)q$ is disjoint from its
image. Therefore, there exists
 a vector field
whose origin describes $pq$ while its endpoint describes first an arc inside $h^{-1}(\cal C)$ from $h(p)$ to $h(q)$ 
and whose index $j_1$ satisfies
 $i_1 = j_1 +n, n\geq 0$. 

Since $h(\alpha)$ is homotopic rel endpoints in $\RR^2 \setminus \gamma \bigcup \alpha$ to an arc inside $h^{-1}(\cal C)$, the index $i_2$ of $h$
along $qp$ is equal to the index $j_2$ of a vector field
whose origin describes $qp$ while its endpoint describes an arc inside $h^{-1}(\cal C)$.

Since, by Lemma~\ref{ilem3} $j_1 + j_2=1$, we have again a contradiction.

\subsection{Third case} $h([\gamma])$ follows $[\gamma]$ and $h([\delta])$ follows $[\delta]$.

This case reduces to the first one by exchanging $h$ and $h^{-1}$.

\subsection{Fourth case} $h([\gamma])$ precedes $[\gamma]$ and $h([\delta])$ follows $[\delta]$.

This case reduces to the second one by exchanging $h$ and $h^{-1}$.

%\bibliographystyle{styledebib}
%\bibliography{biblu}
\newcommand{\etalchar}[1]{$^{#1}$}

\end{document}